\newtheorem{theorem}{Theorem}[section]
\newtheorem{lemma}[theorem]{Lemma}
\newtheorem{assumption}{Assumption}[section]
\theoremstyle{definition}
\theoremstyle{remark}
\newtheorem{remark}[theorem]{Remark}
\numberwithin{equation}{section}
\newcommand{\E}{\mathbb{E}}
\newcommand{\R}{\mathbb{R}}
\numberwithin{equation}{section}
\begin{document}
\title[AEGDM] 
{An Adaptive Gradient Method with Energy and Momentum}

\author{Hailiang Liu and Xuping Tian}
\address{Iowa State University, Mathematics Department, Ames, IA 50011} \email{hliu@iastate.edu, xupingt@iastate.edu}
\subjclass{Primary 65K10; Secondary 90C15, 68Q25}
\begin{abstract}
We introduce a novel algorithm for gradient-based optimization of stochastic objective functions. The method may be seen as a variant of SGD with momentum equipped with an adaptive learning rate automatically adjusted by an `energy' variable. The method is simple to implement, computationally efficient, and well suited for large-scale machine learning problems. 
The method exhibits unconditional energy stability for any size of the base learning rate.  We provide a regret bound on the convergence rate under the online convex optimization framework. We also establish the energy-dependent convergence rate of the algorithm to a stationary point in the stochastic non-convex setting. In addition, a sufficient condition is provided to guarantee a positive lower threshold for the energy variable. Our experiments demonstrate that the algorithm converges fast while generalizing better than or as well as SGD with momentum in training deep neural networks, and compares also favorably to Adam.

\smallskip
\noindent \textbf{Keywords.} stochastic optimization, SGD, energy stability, momentum
\end{abstract}

\maketitle

\section{Introduction}\label{Intro} 
Stochastic gradient descent (SGD) \cite{RM51} is now one of the most dominant approaches for training many machine learning (ML) models including deep neural networks (DNNs) \cite{Go16}. In each iteration, SGD only performs one parameter update on a mini-batch of training examples. Hence it is simple and has been proven to be efficient, especially for tasks on large datasets \cite{Bo12, JK+18, WR+18}.
However, the variance of SGD can slow down the convergence after the first few training epochs; a decaying step size typically has to be applied, which is one of the major bottlenecks for the fast convergence of SGD \cite{Bo12, SW96}. 
In recent years, adaptive variants of SGD have emerged and shown successes for their automatic learning rate adjustment. Examples include Adagrad \cite{Du11}, Adadelta \cite{Z12}, RMSprop \cite{TH12}, and Adam \cite{KB17}; while Adam, which may be seen as a combination of RMSprop and an exponential moving average of the first moment, stands out in this family of algorithms and  stays popular on various tasks. 
 However, training with Adam or its variants typically generalizes worse than SGD with momentum (SGDM), even when the training performance is better \cite{WR+18}. This explains why SGD(M) remains as a popular alternative. 

AEGD (Adaptive gradient decent with energy) \cite{LT20} is another gradient-based optimization algorithm that outperforms 
vanilla SGD. The distinct feature of AEGD is the use of an additional energy variable, which is updated together with the solution. The resulting algorithm is unconditionally energy stable (in the sense detailed in section 2) regardless of the base learning rate. 
Moreover, the element-wise AEGD allows for different effective learning rates for different coordinates, which has been empirically verified more effective than the global AEGD, see \cite{LT20}. 
With AEGD the effective learning rate is the base learning rate multiplied by the energy term, against a transformed gradient.

With Adam-like adaptive gradient methods, adaptation is realized by the normalization in terms of the running average of the second order moment. While these algorithms have been successfully employed in several practical applications, they have also been observed to not converge in some other settings mainly due to the relative sensitivity in such adaptation. Indeed, counterexamples are provided in recent works \cite{CL19, LX19, RK18} to show that RMSprop and Adam do not converge to an optimal solution in either convex or non-convex settings. 
In contrast, the motivation in AEGD is drawn from the perspective of dynamical systems with energy dissipation \cite{LT20}. 
AEGD is unconditionally energy stable with guaranteed convergence in energy regardless of the size of the base learning rate and the shape of the objective functions. This explains why the method can have a rapid initial training process as well as good final generalization performance. 

On the other hand, it has been long known that using momentum can help accelerate gradient descent, hence speeding up the convergence of vanilla Gradient Decent (GD) \cite{P64}. 
For many application tasks, momentum can also help reduce the variance in stochastic gradients \cite{Q99, R86}. Using momentum has become a popular tectnique in order to gain convergence speed significantly \cite{Zhu18, KB17, SM+13}.

With all these observations, a natural question is:

\smallskip
{\bf Can we take the best from both AEGD and SGDM, i.e., design an algorithm that not only enjoys the unconditional energy stability as AEGD, but also features fast convergence and generalizes well as SGDM? }
\smallskip

In this paper, we answer this question affirmatively by incorporating a running sum of the transformed gradient with the element-wise AEGD, termed as AEGDM. It is shown that AEGDM converges faster than AEGD and features certain advantages over SGDM and Adam.

We highlight the main contributions of our work as follows:
\begin{itemize}
    \item We propose a novel and simple algorithm AEGDM, integrating momentum with AEGD, 
    which allows for faster convergence.
    \item  We show the unconditional energy stability of AEGDM, and provide a regret bound on the convergence rate under the online convex optimization framework.
    In the non-convex stochastic setting, we prove the energy-dependent convergence rate to a stationary point. 
    \item We investigate the behavior of energy $r_t$ both numerically and analytically. A sufficient condition to guarantee a positive lower threshold for the energy is provided.   
    
    \item We also provide thorough experiments with our proposed algorithm on training modern deep neural networks. We empirically show that AEGDM achieves a faster convergence speed than AEGD while generalizing  better than or as well as SGDM. 
    \item Our experimental results show that AEGDM achieves better generalization performance than Adam.
\end{itemize}
Regarding the theoretical results, in this work we obtain convergence results for AEGDM, in both stochastic nonconvex setting and online convex setting.   While in \cite{LT20}, convergence analysis is provided mainly in deterministic setting.

\subsection{Further related work}
{The essential idea using an energy variable in the proposed algorithm is related to the invariant energy quadratization (IEQ) approach introduced in \cite{Y16, ZWY17} to develop linear and unconditionally energy stable numerical schemes for a class of PDEs in the form of gradient flows. The scalar auxiliary variable (SAV) approach \cite{SX18} improves IEQ by using a global energy variable. In the case without spatial effects, the two formulations are the same.  \cite{LT20} is the first work to apply this methodology to optimization problems. The resulting scheme called AEGD is known to be unconditionally energy stable, and it is also the basis for the algorithm studied in this work.}

In the field of stochastic optimization, there is an extensive volume of research for designing algorithms to speed up the convergence of SGD. 
Here we review additional related work from three perspectives.  

The first type of idea  to accelerate the convergence of GD and SGD is the use of historical gradients to adapt the step size, with renowned works including \cite{Du11, TH12, Z12} and Adam \cite{KB17}. Many further advances have improved Adam, see, e.g.,  \cite{CL19, Do16, KS17, LJ+19, LH19, LX19, RK18}.
Specifically, AMSGRAD \cite{RK18} was introduced to resolve the non-convergence issue of Adam, but the analysis is restricted only to convex problems. 
AdamW \cite{LH19} applied a simple weight decay regularization to Adam, which significantly improves the performance. There are also some hybrid methods that manage to combine the advantage of Adam and SGD 
\cite{KS17, LX19}.

Another category of work trying to speed up the convergence of GD and SGD is to apply the momentum.
A simple batch GD with constant momentum such as the heavy-ball (HB) method \cite{P64} is known to enjoy the convergence rate of $O(1/t)$ for convex smooth optimization. With an adaptive momentum the Nesterov's accelerated gradient (NAG) \cite{N83, N04} has the convergence rate up to the optimal $O(1/t^2)$. Recent advances show that NAG has other advantages such as speeding up escaping saddle points \cite{JN+18}, accelerating SGD or GD in non-convex problems \cite{Zhu18, Q99}.  One can also improve generalization of SGD in training DNNs with scheduled restart techniques \cite{OC15, RD17, WN+20}.

As discussed earlier, a major bottleneck for the fast convergence of SGD lies in its variance \cite{Bo12, SW96}, a natural idea is to reduce the variance of the stochastic gradient. Different algorithms have been proposed to achieve variance reduction. Some representative works are SAGA \cite{DB+14}, SCSG \cite{LJ+17}, SVRG \cite{JZ13},  Laplacian smoothing \cite{SW+19}, and iterative averaging methods \cite{PJ92, ZC+15}.

\subsection{Organization} 
In Section 2, we review the AEGD algorithm and the properties of its update rule. 
Section 3 presents the proposed algorithm in this work and explains how momentum is incorporated into the AEGD framework. Section 4 provides a theoretical analysis of AEGDM’s energy stability and convergence estimates in both stochastic non-convex optimization, and online convex optimization settings, respectively.   
We empirically demonstrate 
the good performance of our method
for a variety of models and datasets, as shown in Section 5. Concluding remarks are given in Section 6. All technical proofs of our theoretical results are collectively presented in Appendix A-F. A comparison between some adaptive gradient methods and the proposed method is given in Appendix \ref{A-framewrok}.
Overall, we show that AEGDM is a versatile algorithm that scales to large-scale high-dimensional machine learning problems.

\subsection{Notation}  
Throughout this paper, we denote the list $\{1, \cdots, m\}$ as $[m]$ for integer $m$. For vectors and matrices we use $\|\cdot\|$ to denote the $l_2$-norm. For a function $f: \mathbb{R}^n \to \mathbb{R}$, we use $\nabla f$ and $\partial_i:= \partial_{\theta_i}$ to denote its gradient and partial derivative, respectively. For vector $\theta\in\mathbb{R}^n$, we denote its $i$-th coordinate at $t$-th iteration by $\theta_{t, i}$.
In the algorithm description we use $z=x/y$ to denote element-wise division if $x$ and $y$ are both vectors of the same size; $x\odot y$ is element-wise product.

\section{Review of AEGD}\label{sec2} 
Various gradient methods have been proposed in order to achieve better performance on diverse stochastic optimization tasks. The most common such task in machine learning is that of training feedforward neural networks. In this problem, we are given a set of labelled data points $\{x_i, y_i\}_{i=1}^m$, called the training data, and generate a  network output function $\hat f(\theta, x)$ from a feedforward neural network, where $\theta$ corresponds to a collection of the network parameters, then the loss of the network over data point $(x_i, y_i)$ is given by
$
L_i(\theta):= l(\hat f(\theta, x_i), y_i)  
$
for $\theta \in \mathbb{R}^n$. The objective function becomes \begin{equation}\label{dloss}
f(\theta)=\mathbb{E}_{x_i, y_i}[
l(\hat f(\theta, x_i), y_i)], 
\end{equation} 
which is the average loss across data points. The goal is to fit the network parameters so that to minimize the loss over the data. For most commonly-used activation and loss functions, the above function is non-convex.
When $m$ is large, SGD or its variants is preferred for solving (\ref{dloss}) mainly because of their cheapness per iteration cost. 

The key idea of SGD is to modify the updates of GD to be  
$$
\theta_{t+1}=\theta_t -\eta g_t,
$$
where $g_t$ is a stochastic estimator of the gradient with $\mathbb{E}[g_t]=\nabla f(\theta_t)$ and bounded second moment $\mathbb{E}[\|g_t\|_2^2]$. Getting $g_t$ in the posed problem  (\ref{dloss}) is simple, at each iteration $t$ we can take $g_t=\nabla L_{i_t}(\theta_t)$, where $i_t\in[m]$ is picked uniformly at random at step $t$. 

Typically, $ L_i(\theta)$ is bounded from below, that is, 
$$
L_i(\theta)>-c, \quad i \in [m],
$$
for some $c>0$, then $f(\theta):=\frac{1}{m}\sum_{i=1}^{m}L_i(\theta)>-c$. The key idea of AEGD introduced in \cite{LT20} is the use of an additional energy variable $r$ such that 
$$
\nabla f(\theta) = 2rv, \quad  
v:=\nabla \sqrt{f(\theta)+c}, 
$$
where $r$, taking as $\sqrt{f(\theta)+c}$ initially, will be updated together with $\theta$, and $v$ is a transformed gradient. The gradient flow $\dot \theta=-\nabla f(\theta)$ is then replaced by 
$$
\dot \theta=-2rv, \quad \dot r= v\cdot \dot \theta. 
$$
A simple implicit-explicit discretization gives the following AEGD update rule:
$$
\theta_{t+1}=\theta_t-2\eta r_{t+1}v_t, \quad r_{t+1}-r_t=v_t \cdot (\theta_{t+1}-\theta_t).
$$
This yields a decoupled update for $r$ as 
$r_{t+1}=r_t/(1+2\eta |v_t|^2)$. 

{
\begin{remark}
To see why $r=\sqrt{f+c}$ is a reasonable choice to develop efficient optimization algorithms, we consider a more general setting $r=(f+c)^\alpha$ where $\alpha \in (0, 1)$. Then the corresponding gradient flow becomes
$$
\dot \theta=-\alpha^{-1} r^{1/\alpha -1}v, \quad
\dot r = v \cdot \dot \theta.
$$
Using similar implicit- explicit discretization, one can see that the update for $r$ is linear if and only if $\frac{1}{\alpha}-1=1$, that is $\alpha=\frac{1}{2}$. Any other choices would make the resulting algorithm too cumbersome to use in practice.
\end{remark}
}

Presented in Algorithm \ref{alg:AEGD} is the element-wise version of the stochastic AEGD proposed and analyzed  in \cite{LT20}.
\begin{algorithm}
\caption{Stochastic AEGD. Good default setting for parameters are $c=1$ and $\eta=0.1$. }
\label{alg:AEGD}
\begin{algorithmic}[1] 
\Require  $\{L_i({\theta})\}_{i=1}^m$, $\eta$: the step size, ${\theta}_0$: initial guess of $\theta$, and $T$: the total number of iterations.
\Require $c$: a parameter such that for any $i\in[m]$, $L_i({\theta})+c>0$ for all $\theta \in \mathbb{R}^n$, initial energy: $r_0=\sqrt{L_{i_0}({\theta}_0)+c}{\bf 1}$
\For{$t=0$ to $T-1$}
\State $v_t:=\nabla L_{i_t}(\theta_{t})/\big(2\sqrt{L_{i_t}(\theta_t)+c}\big)$ 
($i_t$ is a random sample from $[m]$ at step $t$) %
\State ${r}_{t+1} = {r}_{t}/(1+2\eta v_t\odot v_t)$ (update energy)
\State ${\theta}_{t+1} = {\theta}_{t} - 2\eta {r}_{t+1}\odot v_t $
\EndFor
\State \textbf{return} ${\theta}_T$
\end{algorithmic}
\end{algorithm}
Algorithm \ref{alg:AEGD} is shown to be unconditionally energy stable in the sense that for any step size $\eta>0$,
   \begin{equation}\label{srei}
   \E[r_{t+1, i}^2]=\E[r_{t, i}^2] -\E[(r_{t+1, i} -r_{t, i})^2]- \eta^{-1} \E[(\theta_{ t+1, i}-\theta_{t, i})^2],\quad i\in [n],
   \end{equation}
that is $\E[r_{i, t}]$ is strictly decreasing and convergent with $\E[r_{t, i}] \to r_i^*$ as $t\to \infty$. For AEGD, $r_0=\sqrt{L_{i_0}(\theta_0)+c}{\bf 1}$. Here $c$ is also a hyperparameter, but it requires almost no tuning as long as  $L_{i}(\theta)+c>0$ for any $i\in [m]$ and $\theta \in \mathbb{R}^n$; see \cite{LT20}. 

The energy stability property and convergence rates obtained in \cite{LT20} apply well for arbitrary sampling of form 
\begin{equation}\label{fxi}
f(\theta;\xi)=\frac{1}{m}\sum_{j=1}^m \xi_j L_j(\theta),
\end{equation} 
where $\xi \in \mathbb{R}^m_+$ is a random sampling vector (drawn from some distribution) such that $\mathbb{E}[\xi_j]=1$ for $j\in[m].$ Of particular interest is the b-minibatch sampling: $\xi\in \mathbb{R}^m_+$ is a b-minibatch sampling if for every subset $M\subset [m]$ with $|M|=b$ so that 
$\mathbf{\xi}=\frac{m}{b}\sum_{i\in M}{\bf e}_i$.
\section{The proposed algorithm}\label{sec3} 
In this section, we propose a novel algorithm for speeding up AEGD with momentum.   
We denote the realizations of the stochastic objective function $f(\theta; \xi)$ at subsequent time steps $0,...,T$ by $f_0(\theta),...,f_T(\theta)$, then $f_t(\theta_t)$ and $\nabla f_t(\theta_t)$ are stochastic function value and gradient at step $t$, respectively. This set up for problem (\ref{fxi}) corresponds to 
\begin{equation}\label{stoc}
 f_t(\theta_t)=f(\theta_t; \xi_t).    
\end{equation}
This way the stochasticity may come from the evaluation at random samples (mini-batches) of data points. 
Such set up is more general and links to  the online optimization \cite{Ha19,Zin03}, for which one must select a point in the parameter space before seeing the cost function for that step. For the static case it reduces to   $f_t(\theta_t)= f(\theta_t)$. 

Under the current setting (online or stochastic with (\ref{stoc})), if we assume $f_t(\theta)+c >0$ for any $t\in[T]-1$ and $\theta\in\mathbb{R}^n$, the AEGD method can be reformulated as:
\begin{subequations}
\begin{align}
& v_{t,i} = \frac{\partial_i f_{t}(\theta_t)}{2\sqrt{f_{t}(\theta_t)+c}},\quad i\in[n], \\
& r_{t+1,i} =\frac{r_{t,i}}{1+2\eta v_{t,i}^2},\quad r_{0,i}=\sqrt{f_{0}(\theta_0)+c}, \\
& \theta_{t+1,i}=\theta_{t,i}-2\eta r_{t+1,i}v_{t,i}.
\end{align}
\end{subequations}
This method differs from other adaptive gradient methods in that the adaptation here is through the update of an auxiliary energy variable $r_t$. Keeping this adaptive feature, we propose a momentum update by accumulating a running sum of the historical values of $v_t$, i.e., 
$m_{t+1}=\sum_{i=0}^t \mu^i v_{t-i}$. 
Such AEGD with momentum (i.e., AEGDM) can be expressed as 
\begin{subequations}\label{aegdm}
\begin{align}
& v_{t,i} = \frac{\partial_i f_{t}(\theta_t)}{2\sqrt{f_{t}(\theta_t)+c}},\quad i\in[n], \\
& m_{t+1,i} = \mu m_{t,i}+ v_{t,i},\quad m_{0,i}=0, \\
& r_{t+1,i} =\frac{r_{t,i}}{1+2\eta v_{t,i}^2},\quad r_{0,i}=\sqrt{f_{0}(\theta_0)+c}, \\
& \theta_{t+1,i}=\theta_{t,i}-2\eta r_{t+1,i}m_{t+1,i}. 
\end{align}
\end{subequations}
The added moving sum allows for the following reformulation
$$
 \theta_{t+1,i}=\theta_{t,i}-2\eta r_{t+1,i}v_{t,i} +\mu \frac{r_{t+1, i}}{r_{t, i}} (\theta_{t, i}-\theta_{t-1, i}),
$$
which is similar to the classical momentum since $\left(\mu r_{t+1, i}/r_{t, i}\right)\leq \mu <1$. 
\begin{remark} For the momentum one may also use $m_{t+1} = \mu m_t + (1-\mu)v_{t}$ to obtain an alternative update rule. With this choice, the related bounds in our theoretical results would differ through the factor $(1-\mu)$. Also, this choice appears to bring similar performance when using the base learning rate of size $\eta/(1-\mu)$, as evidenced by our preliminary numerical tests. 
\end{remark} 
We present the procedure of AEGDM in Algorithm \ref{alg2}. {Here we want to point out that the computational complexity of AEGDM is at the same level of SGD. The additional cost only comes from the explicit update of two extra variables $m$ and $r$ in each iteration.} 
\begin{algorithm}
\caption{AEGDM. Good default setting for parameters are $c=1$, $\eta=0.01$, $\mu=0.9$}
\label{alg2}
\begin{algorithmic}[1] 
\Require A sequence of objective functions $\{f_t\}_{t=0}^{T-1}$; a constant $c$ such that $f_t(\theta)+c>0$ for all $t\in[T]-1$; base learning rate $\eta$
\Require Initialize: $\theta_0$; $m_0=0$; $r_0=\sqrt{f_0(\theta_0)+c}\,\bf{1}$
\For{$t=0$ to $T-1$}
\State $v_t=\nabla f_{t}(\theta_t)/(2\sqrt{f_{t}(\theta_t)+c})$ (transformed gradient)
\State $m_{t+1}=\mu m_{t}+ v_t$ (momentum update)
\State $r_{t+1}=r_{t}/(1+2\eta v_t\odot v_t)$ (energy update)
\State $\theta_{t+1}=\theta_t-2\eta r_{t+1}\odot m_{t+1}$
\EndFor
\State \textbf{return} ${\theta}_T$
\end{algorithmic}
\end{algorithm}

\section{Theoretical results}\label{sec4} 
In this section, we present theoretical results for both online setting and the stochastic setting, respectively. 

\subsection{Online setting}
We first present the energy stability and solution properties of AEGDM \eqref{aegdm},  and then derive a regret bound for it in the online convex setting. 
\begin{theorem}[Energy stability and solution properties]
\label{prop1} AEGDM (\ref{aegdm}) is unconditionally energy stable in the sense that for any step size $\eta>0$ and each $i\in [n]$, $r_{t, i}$ is strictly decreasing and convergent with $r_{t, i} \to r^*_i$ as $t\to \infty$. Moreover, we have the following:\\
(i) for any $\mu<1$ and $\eta>0$,  
\begin{equation}\label{rev1}
\lim_{t\to \infty} \|\theta_{t+1}-\theta_{t}\|=0, \quad 
\sum_{t=0}^\infty\|\theta_{t+1}-\theta_t\|^2 \leq \frac{2\eta n}{(1-\mu)^2}(f_0(\theta_0)+c);
\end{equation}
(ii) for any $\eta>0$, 
\begin{align}\label{T}
\frac{1}{T}\sum_{t=0}^{T-1}|v_{t,i}|
\leq 
\Bigg(\frac{\sqrt{f_0(\theta_0)+c}}{2}\Bigg)^{1/2}\frac{1}{\sqrt{\eta Tr_{T,i}}}, \quad i\in [n]. 
\end{align}

\end{theorem}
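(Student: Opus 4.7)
The plan has three natural pieces, matching the three assertions. First, unconditional energy stability is immediate from the update rule: since $r_{t+1,i} = r_{t,i}/(1+2\eta v_{t,i}^2)$ with $2\eta v_{t,i}^2 \ge 0$ and $r_{0,i} = \sqrt{f_0(\theta_0)+c}>0$, the sequence $\{r_{t,i}\}$ is positive and non-increasing, hence convergent to some $r^*_i \ge 0$. I would also record here the two algebraic identities that drive everything else: rewriting the energy update as $r_{t,i}-r_{t+1,i} = 2\eta r_{t+1,i} v_{t,i}^2$, and squaring to obtain
\begin{equation*}
r_{t,i}^2 - r_{t+1,i}^2 = (r_{t,i}-r_{t+1,i})^2 + 4\eta r_{t+1,i}^2 v_{t,i}^2.
\end{equation*}
Telescoping from $t=0$ to $T-1$ then yields the key summability estimate $\sum_{t=0}^{T-1} 4\eta r_{t+1,i}^2 v_{t,i}^2 \le r_{0,i}^2 = f_0(\theta_0)+c$.

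For part (i), unrolling the momentum recursion gives $m_{t+1,i} = \sum_{j=0}^{t}\mu^{t-j}v_{j,i}$. The main obstacle is that the step size is $r_{t+1,i}$ but the tail of $m_{t+1,i}$ involves old $v_{j,i}$'s; the trick is to combine a Cauchy--Schwarz step on $m_{t+1,i}^2$,
\begin{equation*}
m_{t+1,i}^2 \le \Bigl(\sum_{j=0}^{t}\mu^{t-j}\Bigr)\Bigl(\sum_{j=0}^{t}\mu^{t-j}v_{j,i}^2\Bigr) \le \frac{1}{1-\mu}\sum_{j=0}^{t}\mu^{t-j}v_{j,i}^2,
\end{equation*}
with the monotonicity $r_{t+1,i}\le r_{j+1,i}$ for $j\le t$ to upgrade $r_{t+1,i}^2 v_{j,i}^2 \le r_{j+1,i}^2 v_{j,i}^2$. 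Writing $(\theta_{t+1,i}-\theta_{t,i})^2 = 4\eta^2 r_{t+1,i}^2 m_{t+1,i}^2$, summing over $t$, and swapping the order of summation (so the geometric factor $\sum_{t\ge j}\mu^{t-j}$ produces another $1/(1-\mu)$) reduces the double sum to $\sum_j r_{j+1,i}^2 v_{j,i}^2$, which is bounded by the telescoped identity above. Summing over the coordinate $i\in[n]$ gives the claimed bound, from which $\lim_t\|\theta_{t+1}-\theta_t\|=0$ follows because the series converges.

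For part (ii), the cleanest route is to reciprocate the energy update: $1/r_{t+1,i} - 1/r_{t,i} = 2\eta v_{t,i}^2/r_{t,i}$. Using $r_{t,i}\le r_{0,i}$ on the right and telescoping gives
\begin{equation*}
\sum_{t=0}^{T-1} v_{t,i}^2 \le \frac{r_{0,i}}{2\eta}\Bigl(\frac{1}{r_{T,i}}-\frac{1}{r_{0,i}}\Bigr) \le \frac{r_{0,i}}{2\eta\, r_{T,i}}.
\end{equation*}
A single application of Cauchy--Schwarz, $\bigl(\sum_{t=0}^{T-1}|v_{t,i}|\bigr)^2 \le T\sum_{t=0}^{T-1}v_{t,i}^2$, then dividing by $T$ and substituting $r_{0,i}=\sqrt{f_0(\theta_0)+c}$ yields exactly \eqref{T}.

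The one delicate bookkeeping point is the combined use of monotonicity of $r$ with the Cauchy--Schwarz/Fubini step in part (i); everything else is essentially algebraic manipulation of the two identities derived at the outset.
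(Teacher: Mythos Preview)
Your argument is correct and, for both parts (i) and (ii), takes a route that differs from the paper's.

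For (i), the paper does not expand $m_{t+1,i}$ directly. Instead it bounds $r_{t+1,i}^2 \le r_{0,i}\,r_{t+1,i}$ and then controls the quantity $G_i(T,\mu):=\sum_{t=0}^{T-1} r_{t+1,i}m_{t+1,i}^2$ by an inductive argument (Lemma~A.1): using Young's inequality $(\mu m_{t,i}+v_{t,i})^2 \le (1+\epsilon)\mu^2 m_{t,i}^2 + (1+\epsilon^{-1})v_{t,i}^2$ with $\epsilon=(1-\mu)/\mu$ and monotonicity $r_{t+1,i}\le r_{t,i}$ yields $G_i(T,\mu)\le \frac{1}{(1-\mu)^2}G_i(T,0)$, and $G_i(T,0)\le r_{0,i}/(2\eta)$ by the linear telescope $r_{t,i}-r_{t+1,i}=2\eta r_{t+1,i}v_{t,i}^2$. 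Your approach---Cauchy--Schwarz on the convolution form of $m_{t+1,i}$, monotonicity $r_{t+1,i}^2\le r_{j+1,i}^2$, Fubini, and then the \emph{squared} telescope $\sum_t 4\eta r_{t+1,i}^2 v_{t,i}^2 \le r_{0,i}^2$---is more elementary and in fact yields the constant $\eta n/(1-\mu)^2$, a factor of $2$ sharper than the stated bound. The paper's detour through $G_i(T,\mu)$ has the advantage that this lemma is reused verbatim in the regret-bound proof (Theorem~4.2), so it earns its keep there.

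For (ii), the paper applies a weighted Cauchy--Schwarz, $\sum_t|v_{t,i}| \le \bigl(\sum_t 1/r_{t+1,i}\bigr)^{1/2}\bigl(\sum_t r_{t+1,i}v_{t,i}^2\bigr)^{1/2}$, and bounds the two factors by $T/r_{T,i}$ (monotonicity) and $r_{0,i}/(2\eta)$ (linear telescope). Your reciprocal telescope $1/r_{t+1,i}-1/r_{t,i}=2\eta v_{t,i}^2/r_{t,i}\ge 2\eta v_{t,i}^2/r_{0,i}$ followed by unweighted Cauchy--Schwarz reaches the identical inequality; the two arguments are really the same estimate viewed from opposite ends.
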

The proof is deferred to Appendix \ref{pfp1}.
\begin{remark} 
(i) In the static case, $f_t(\theta)=f(\theta)$. Even in such case, $f(\theta_t)$ may not be decreasing in $t$ unless $\eta$ is sufficiently small. However, $r_t$, which serves to approximate $\sqrt{f(\theta_t)+c}$, is strictly decreasing for any $\eta>0$. This is why such property is termed as energy stability. \\ 
(ii) The unconditional energy
stability featured by AEGDM also implies convergence of the sequence $\|\theta_{t+1}-\theta_t\|$ to zero at a rate of at least $1/\sqrt{t}$. But this is not
sufficient--at least in general--to guarantee the convergence of $\{\theta_t\}_{t\geq 0}$, unless further control on this sequence is available.\\ 
(iii) The coordinate-wise estimate in (\ref{T}) allows the control of the average of $|v_t|$ for each direction. This estimate indicates that the scheme convergence is inseparable from the asymptotic behavior of $r_t$. 
\end{remark}
Within the online learning framework proposed in \cite{Zin03},  
at each step $t$, the goal is to predict the parameter $\theta_t$ and evaluate it on a previously unknown cost function $f_t$. The nature of the sequence is unknown in advance, we evaluate our algorithm using the regret, that is the sum of all the previous difference between the online prediction $f_t(\theta_t)$ and the best fixed point parameter $f_t(\theta^*)$ from a feasible set $\Theta$: 
$$
R(T)=\sum_{t=0}^{T-1}[f_t(\theta_t)-f_t(\theta^*)], 
$$
where $\theta^* = {\rm argmin}_{\theta\in \Theta} \sum_{t=0}^{T-1}f_t(\theta)$. 
We are able to bound $R$ as stated in the following. 
\begin{theorem}[Regret guarantee]
\label{thm1} Given the sequence $\{\theta_t\}$ generated by AEGDM \eqref{aegdm} with $\mu < 1$ and $\eta>0$.
Assume that $\|\theta-\theta'\|_\infty\leq D_\infty$ for all $\theta,\theta'\in\Theta$, and $0< f_t(\theta_t)+c\leq B$
for all $t\in[T]-1$. 
When $\Theta$ and $\{f_t\}$ are convex, AEGDM achieves the following bound on the regret, for all $T\geq 1$, 
\begin{equation}\label{RT}
R(T) \leq  C_1 \Bigg(\sum_{i=1}^n \frac{1}{\eta r_{T, i}}\Bigg)^{1/2}\sqrt{T}+ C_2,
\end{equation}
where $C_1, C_2$ are constants depending on $\mu, D_\infty, B, n$ and $f_0(\theta_0)+c$. 
\end{theorem}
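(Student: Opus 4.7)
The plan is to start from convexity of each $f_t$ and rewrite the per-step suboptimality gap using the transformed gradient identity $\partial_i f_t(\theta_t)=2w_t v_{t,i}$, where $w_t:=\sqrt{f_t(\theta_t)+c}\in(0,\sqrt{B}]$, together with the momentum relation $v_{t,i}=m_{t+1,i}-\mu m_{t,i}$ and the update $\theta_{t+1,i}-\theta_{t,i}=-2\eta r_{t+1,i}m_{t+1,i}$. Expanding $(\theta_{t+1,i}-\theta_i^*)^2$ and rearranging will give the per-coordinate identity
\begin{equation*}
2w_t v_{t,i}(\theta_{t,i}-\theta_i^*)
= \frac{w_t}{2\eta r_{t+1,i}}\bigl[(\theta_{t,i}-\theta_i^*)^2-(\theta_{t+1,i}-\theta_i^*)^2\bigr]
+ 2\eta w_t r_{t+1,i} m_{t+1,i}^2
- 2\mu w_t m_{t,i}(\theta_{t,i}-\theta_i^*),
\end{equation*}
and convexity then bounds $f_t(\theta_t)-f_t(\theta^*)$ by the sum of the left-hand side over $i$. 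Summing over $t=0,\dots,T-1$ leaves three blocks to estimate.

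Each block will be controlled using the energy structure of AEGDM guaranteed by Theorem~\ref{prop1}. For the telescoping block, I would bound $w_t\leq\sqrt{B}$ and exploit the monotonicity of $1/r_{t+1,i}$: Abel summation together with $\|\theta-\theta'\|_\infty\leq D_\infty$ collapses the coordinate-$i$ contribution to at most $\sqrt{B}\,D_\infty^2/(2\eta r_{T,i})$. For the quadratic momentum block, applying Cauchy--Schwarz to $m_{t+1,i}=\sum_{s\leq t}\mu^{t-s}v_{s,i}$ yields $m_{t+1,i}^2\leq(1-\mu)^{-1}\sum_s\mu^{t-s}v_{s,i}^2$; combining with the energy identity $2\eta r_{t+1,i}v_{t,i}^2=r_{t,i}-r_{t+1,i}$ and the monotonicity $r_{s+1,i}\geq r_{t+1,i}$ for $t\geq s$ will give $\sum_t r_{t+1,i}m_{t+1,i}^2\leq r_{0,i}/(2\eta(1-\mu)^2)$, and multiplying by $2\eta\sqrt{B}$ and summing over $i$ produces precisely the $T$-independent residual of order $n(1-\mu)^{-2}\sqrt{B}\sqrt{f_0(\theta_0)+c}$ that matches $C_2$. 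For the momentum cross term I would bound it by $2\mu\sqrt{B}\,D_\infty|m_{t,i}|$, use $\sum_t|m_{t,i}|\leq\sqrt{T}\bigl(\sum_t m_{t,i}^2\bigr)^{1/2}$, and reuse the quadratic estimate (with $r_{t,i}\geq r_{T,i}$) to extract a contribution of order $\sqrt{T/(\eta r_{T,i})}$, whose combination with the telescoping block supplies the coordinate-wise $\sqrt{T}/\sqrt{r_{T,i}}$ scaling with the momentum factor $(1+\mu)/(1-\mu)$.

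Summing the per-coordinate bounds over $i$ and applying Cauchy--Schwarz in the form $\sum_i 1/\sqrt{r_{T,i}}\leq\sqrt{n}\bigl(\sum_i 1/r_{T,i}\bigr)^{1/2}$ turns the coordinate-wise factors into the advertised $\bigl(\sum_i 1/(\eta r_{T,i})\bigr)^{1/2}\sqrt{T}$ term, with the accumulated prefactor of the stated form $C_1$; adding the $C_2$ residual closes the proof. The main obstacle I anticipate is the telescoping block: unlike in Adam-/AMSGrad-style analyses where the coefficient multiplying the squared-distance differences is deterministic and monotone, here the prefactor carries the fluctuating scalar $w_t$, so a naive Abel summation is invalid; the uniform bound $w_t\leq\sqrt{B}$ must be inserted carefully (splitting the sign-indefinite bracket $(\theta_{t,i}-\theta_i^*)^2-(\theta_{t+1,i}-\theta_i^*)^2$ so that the monotone weight can still be telescoped), and this step is what forces $\sqrt{B}$ (rather than a tighter averaged surrogate) to appear in $C_1$, making it the principal source of looseness compared to the shortcut offered by the per-coordinate estimate~\eqref{T}.
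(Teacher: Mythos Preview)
Your decomposition matches the paper's through the per-coordinate identity, and your handling of the quadratic momentum block and the cross term is essentially the paper's argument (Lemma~\ref{lemA2} together with Cauchy--Schwarz on $\sum_t|m_{t,i}|$). The gap is in the telescoping block. You correctly diagnose the obstacle---the coefficient $w_t/(2\eta r_{t+1,i})$ is not monotone because $w_t$ fluctuates freely in $(0,\sqrt B]$---but the fix you sketch does not work. Replacing $w_t$ by $\sqrt B$ is only an upper bound on the summand when the bracket $b_t-b_{t+1}$ (with $b_t:=(\theta_{t,i}-\theta_i^*)^2$) is nonnegative; on the complementary set you can only drop the term. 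This leaves $\frac{\sqrt B}{2\eta}\sum_t\frac{(b_t-b_{t+1})^+}{r_{t+1,i}}$, and since $b_t$ may oscillate within $[0,D_\infty^2]$, the positive variation $\sum_t(b_t-b_{t+1})^+$ is not bounded by $D_\infty^2$ but can be of order $TD_\infty^2$. Hence the block is at best $O\bigl(T/(\eta r_{T,i})\bigr)$, not $O\bigl(1/(\eta r_{T,i})\bigr)$ as you claim, and no Abel rearrangement repairs this because the monotone part $1/r_{t+1,i}$ and the fluctuating part $w_t$ cannot be decoupled after the sign split. Even if your claimed bound held, the resulting $\sum_i 1/(\eta r_{T,i})$ term does not fit the stated form \eqref{RT}.

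The paper avoids telescoping altogether. It factors
\[
(\theta_{t,i}-\theta_i^*)^2-(\theta_{t+1,i}-\theta_i^*)^2=(\theta_{t,i}-\theta_{t+1,i})\bigl[(\theta_{t,i}-\theta_i^*)+(\theta_{t+1,i}-\theta_i^*)\bigr]
\]
and uses the update $\theta_{t,i}-\theta_{t+1,i}=2\eta r_{t+1,i}m_{t+1,i}$ to cancel the troublesome $1/(2\eta r_{t+1,i})$ prefactor \emph{exactly}, leaving $F_t\,m_{t+1,i}\bigl[(\theta_{t,i}-\theta_i^*)+(\theta_{t+1,i}-\theta_i^*)\bigr]$. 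This is now of the same type as your cross term: it is bounded by $2\sqrt B\,D_\infty|m_{t+1,i}|$, and all three blocks collapse to $2(1+\mu)\sqrt B\,D_\infty M(T)+2\eta\sqrt B\,G(T,\mu)$ with $M(T)=\sum_t\sum_i|m_{t+1,i}|$. Then $M(T)\le\sqrt{G(T,\mu)}\bigl(\sum_i T/r_{T,i}\bigr)^{1/2}$ and \eqref{GTmu} finish the job. The factorization is the missing idea in your plan; once you insert it, the rest of your outline goes through verbatim.
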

The proof and precise expressions for $C_1, C_2$ are deferred to Appendix \ref{pf1}.
\begin{remark} 
(i) The regret bound in \eqref{RT} 
and the bound in Theorem \ref{thm2c} may be seen as a posteriori results due to their dependence on the energy variable $r$ at the $T$-th iteration. These bounds can be useful in planning a learning rate decay schedule based on the updated $r_t$. 
If $r^*_i>0$, then $R(T)$ is of order $O(\sqrt{T})$, 
which is known the best possible bound for online convex optimization \cite[Section 3.2]{Ha19}. 
$r^*_i>0$ is shown to be conditionally true in the stochastic setting (Theorem \ref{thm2r}). We observe from  experimental results (See Figure \ref{fig:minr} in Appendix \ref{spep}) that $r_{t, i}$ decays slowly at rate of 
$ t^{-\alpha}$ with $0<\alpha<\frac{1}{2}$ (an algebraic decay rate) for large $t$ with $\eta$ within a reasonable range. In such case, the upper bound of $R(T)$ is of order $O(T^\frac{\alpha+1}{2})$. 
This still ensures the convergence in the sense that
$$
\lim_{T \to \infty} \frac{R(T)}{T}=0.
$$
It would be of interest to further investigate when this might fail.\\ (ii) The bound on $\theta_t$
is typically enforced by projection onto $\Theta$ \cite{Zin03}, with which the regret bound (\ref{RT}) can still be proven since projection is a contraction operator \cite[Chapter 3]{Ha19}.
\end{remark}

\subsection{Stochastic setting}
We proceed to present theoretical results for AEGDM \eqref{aegdm} in the stochastic setting. Our aim is at solving the following stochastic nonconvex optimization problem
$$
\min_{\theta \in \mathbb{R}^n} \left\{ f(\theta):= \mathbb{E}_\xi[f(\theta; \xi)]\right\},
$$
where $\xi$ is a random variable satisfying certain distribution, and $f(\theta; \xi): \mathbb{R}^n \to \mathbb{R}$  is a differentiable nonconvex function and bounded from below so that $f(\theta; \xi)+c>0$. In the stochastic setting, one can only get estimators of $f(\theta)$ and its gradient,  $f(\theta; \xi)$ and $\nabla f(\theta; \xi)$, respectively, with which we take $v_t=\nabla f(\theta_t; \xi_t)/(2\sqrt{f(\theta_t; \xi_t)+c})$ in Algorithm \ref{alg2}. 

In the stochastic setting, unconditional energy stability and solution properties in Theorem \ref{prop1} may be stated in the following. 

\begin{theorem}[Energy stability and solution properties]
\label{prop2} AEGDM (\ref{aegdm}) is unconditionally energy stable in the sense that for any step size $\eta>0$, $\E[r_{t, i}]$ is strictly decreasing and convergent with $\E[r_{t, i}] \to r^*_i$ as $t\to \infty$. Moreover, we have the following:\\
(i) for any $\mu<1$ and $\eta>0$,  
\begin{equation}\label{rev1+}
\lim_{t\to \infty} \E[\|\theta_{t+1}-\theta_{t}\|]=0, \quad 
\sum_{t=0}^\infty \E[\|\theta_{t+1}-\theta_t\|^2] \leq \frac{2\eta n}{(1-\mu)^2} (f(\theta_0)+c);
\end{equation}
(ii) for any $\eta>0$, 
\begin{align}\label{T+}
\frac{1}{T}\sum_{t=0}^{T-1}\E[|v_{t,i}|]
\leq 
\Bigg(\frac{\sqrt{f(\theta_0)+c}}{2}\Bigg)^{1/2}
\left(\E\left[\frac{1}{\eta Tr_{T,i}}\right]\right)^{1/2}, \quad i\in [n]. 
\end{align}
\end{theorem}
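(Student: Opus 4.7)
The plan is to mirror the deterministic proof of Theorem \ref{prop1}, establishing everything pathwise and taking expectations only at the last step. The entire argument is driven by two algebraic consequences of the energy update $r_{t+1,i}=r_{t,i}/(1+2\eta v_{t,i}^2)$, namely
\[r_{t,i}-r_{t+1,i}=2\eta\, r_{t+1,i} v_{t,i}^2, \qquad \frac{1}{r_{t+1,i}}-\frac{1}{r_{t,i}} = \frac{2\eta v_{t,i}^2}{r_{t,i}}.\]
The first yields the pathwise monotonicity $0<r_{t+1,i}\le r_{t,i}\le r_{0,i}$, so $\E[r_{t,i}]$ is non-increasing and, being bounded below by zero, converges to some $r_i^\ast$; summing the same identity telescopically produces the central estimate $\sum_{t\ge 0} r_{t+1,i}\, v_{t,i}^2 \le r_{0,i}/(2\eta)$ that drives both parts.

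For part (i), I would expand $\theta_{t+1,i}-\theta_{t,i}=-2\eta\, r_{t+1,i}\, m_{t+1,i}$ with $m_{t+1,i}=\sum_{k=0}^t \mu^{t-k}v_{k,i}$, apply the Cauchy--Schwarz inequality to obtain $m_{t+1,i}^2 \le (1-\mu)^{-1}\sum_{k=0}^t \mu^{t-k}v_{k,i}^2$, and then exchange the order of summation in $\sum_t \|\theta_{t+1}-\theta_t\|^2$. Using $r_{t+1,i}\le r_{k+1,i}$ for $t\ge k$, the inner geometric sum collapses to $r_{k+1,i}^2/(1-\mu)$; bounding $r_{k+1,i}\le r_{0,i}$ and invoking the central estimate yields the pathwise bound $\sum_t\|\theta_{t+1}-\theta_t\|^2 \le \frac{2\eta}{(1-\mu)^2}\sum_i r_{0,i}^2$. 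Taking expectations and using $\E[r_{0,i}^2]=\E[f_0(\theta_0)+c]=f(\theta_0)+c$ gives the stated summability bound; summability then forces $\E[\|\theta_{t+1}-\theta_t\|^2]\to 0$, and the Jensen inequality $\E[\|\cdot\|]\le\sqrt{\E[\|\cdot\|^2]}$ yields the $L^1$ convergence to zero.

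For part (ii), a pathwise Cauchy--Schwarz with weight $r_{t,i}$, combined with telescoping of the second identity above, gives
\[\sum_{t=0}^{T-1}|v_{t,i}| \le \Bigl(\sum_{t=0}^{T-1} r_{t,i}\Bigr)^{1/2}\Bigl(\sum_{t=0}^{T-1} v_{t,i}^2/r_{t,i}\Bigr)^{1/2} \le \sqrt{T r_{0,i}}\cdot(2\eta r_{T,i})^{-1/2}.\]
Taking expectations and then splitting $\E\bigl[\sqrt{r_{0,i}/r_{T,i}}\bigr]\le \sqrt{\E[r_{0,i}]}\cdot\sqrt{\E[1/r_{T,i}]}$ by $L^2$ Cauchy--Schwarz, followed by Jensen $\E[r_{0,i}]=\E[\sqrt{f_0(\theta_0)+c}]\le\sqrt{f(\theta_0)+c}$, delivers exactly the claimed right-hand side. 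The one genuinely new ingredient relative to Theorem \ref{prop1} is this $L^2$-level splitting across $\Omega$, which is precisely what forces the appearance of $f(\theta_0)+c$ rather than $f_0(\theta_0)+c$; all remaining steps are pathwise, so no measurability or conditioning issues arise, and the main obstacle reduces to careful bookkeeping of the geometric sums against the telescoping identity.
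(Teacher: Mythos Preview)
Your proposal is correct and follows the paper's overall strategy (everything is done pathwise, then expectations are taken at the end), but with two technical variations worth flagging.

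For part (i), the paper bounds $\sum_t r_{t+1,i}\, m_{t+1,i}^2$ via an auxiliary recursion lemma (Lemma~\ref{lemA1}), which shows $G_i(T,\mu)\le (1-\mu)^{-2}G_i(T,0)$ by iterating the one-step inequality $(\mu m_t+v_t)^2\le (1+\epsilon)\mu^2 m_t^2+(1+\epsilon^{-1})v_t^2$ with the optimal choice $\epsilon=(1-\mu)/\mu$. You instead expand $m_{t+1,i}=\sum_{k\le t}\mu^{t-k}v_{k,i}$, apply Cauchy--Schwarz on the geometric weights, swap the order of summation, and use the monotonicity $r_{t+1,i}\le r_{k+1,i}$. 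Both routes land on exactly the same pathwise bound $\sum_t r_{t+1,i}^2 m_{t+1,i}^2\le r_{0,i}^2/(2\eta(1-\mu)^2)$; your argument is slightly more elementary in that it avoids the induction, while the paper's lemma is stated in a form reusable elsewhere (e.g.\ in the proofs of Theorems~\ref{thm1} and \ref{thm2c}).

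For part (ii), the paper weights the Cauchy--Schwarz by $r_{t+1,i}$ and telescopes $r_{t,i}-r_{t+1,i}=2\eta r_{t+1,i}v_{t,i}^2$, whereas you weight by $r_{t,i}$ and telescope the reciprocal identity $1/r_{t+1,i}-1/r_{t,i}=2\eta v_{t,i}^2/r_{t,i}$; these are dual manipulations yielding the identical pathwise bound $\sqrt{Tr_{0,i}/(2\eta r_{T,i})}$. Your explicit handling of the expectation via the $L^2$ split $\E[\sqrt{r_{0,i}/r_{T,i}}]\le\sqrt{\E[r_{0,i}]}\sqrt{\E[1/r_{T,i}]}$ followed by Jensen on $\E[r_{0,i}]\le\sqrt{f(\theta_0)+c}$ is precisely what the paper's terse ``with the use of expectation'' is hiding, and is needed to pass from the random $r_{0,i}$ to the deterministic constant in the statement.
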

In order to present the convergence rate of AEGDM \eqref{aegdm}, we make assumptions that are commonly used for analyzing the convergence of a stochastic algorithm for nonconvex problems:
\begin{assumption}\label{asp}
\begin{enumerate}[label=\arabic*.]\setlength{\itemindent}{-2.2em}
\item {\bf Smoothness:} The objective function is $L$-smooth: $\forall x,y\in\R^n$, 
$$f(y)\leq f(x)+\nabla f(x)^\top(y-x)+\frac{L}{2}\|y-x\|^2.$$
\item {\bf Independent samples:} The random samples $\{\xi_t\}_{t=1}^\infty$ are independent.
\item {\bf Unbiasedness:} The estimate of the gradient and function value are unbiased: 
$$
\E_{\xi_t}[\nabla f(\theta_t;\xi_t)]=\nabla f(\theta_t),\quad \E_{\xi_t}[f(\theta_t;\xi_t)]= f(\theta_t).
$$
\item {\bf Bounded variance:} The variance of the estimator of both gradient and function value satisfy
$$
\E_{\xi_t}[\|\nabla f(\theta_t;\xi_t)-\nabla f(\theta_t)\|^2_2]\leq \sigma^2_g,\quad \E_{\xi_t}[|f(\theta_t;\xi_t)- f(\theta_t)|^2]\leq \sigma^2_f.
$$
\end{enumerate}
\end{assumption}

\begin{theorem}[Convergence rate]\label{thm2c} Let $\{\theta_t\}$ be the solution sequence generated by AEGDM \eqref{aegdm} with $f_t$ in the form of \eqref{stoc} and $\mu<1$, $\eta>0$. Under Assumption \ref{asp} and assume that the stochastic gradient and function value are bounded such that $\|\nabla f(\theta_t;\xi_t)\|\leq G_\infty$ and $0<a\leq f(\theta_t,\xi_t)+c\leq B$, then for all $T\geq 1$,
\begin{align*}
\frac{1}{T}\E\Bigg[\min_i r_{T,i} \sum_{t=0}^{T-1}\|\nabla f(\theta_t)\|^2_2\Bigg]
\leq \frac{C_1+C_2 n+C_3\sigma_g \sqrt{ nT}}{\eta T},
\end{align*}
where $C_1, C_2, C_3$ are constants depending on $\mu, \eta, L, G_\infty, a, B, n$ and  $f_0(\theta_0)+c$. 
\end{theorem}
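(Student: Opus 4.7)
The approach is a descent-lemma argument for stochastic momentum methods, adapted to the energy-augmented AEGDM iteration. The key observation is that each $r_{t,i}$ is pathwise non-increasing in $t$, so $r_{t+1,i} \geq r_{T,i} \geq \min_j r_{T,j}$ for every $t \leq T-1$; consequently a weighted-gradient bound of the form $\sum_t \sum_i r_{t+1,i}[\partial_i f(\theta_t)]^2 \leq \mathrm{RHS}$ will immediately deliver the stated inequality after pulling $\min_j r_{T,j}$ out of the inner sum. To produce such a weighted bound I first apply $L$-smoothness to the update $\theta_{t+1}-\theta_t = -2\eta\, r_{t+1}\odot m_{t+1}$, obtaining
\[
f(\theta_{t+1}) \leq f(\theta_t) - 2\eta\,\nabla f(\theta_t)^\top(r_{t+1}\odot m_{t+1}) + \frac{L}{2}\|\theta_{t+1}-\theta_t\|^2,
\]
and unfold $m_{t+1}=v_t+\mu m_t$ to split the cross term into a current stochastic-gradient contribution and a momentum-residual contribution.

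The current-step term, after inserting $v_{t,i} = \partial_i f(\theta_t;\xi_t)/(2\sqrt{f(\theta_t;\xi_t)+c})$ and splitting $\partial_i f(\theta_t;\xi_t) = \partial_i f(\theta_t) + [\partial_i f(\theta_t;\xi_t)-\partial_i f(\theta_t)]$, yields the desired $\sum_i r_{t+1,i}[\partial_i f(\theta_t)]^2 \cdot (2\sqrt{f(\theta_t;\xi_t)+c})^{-1}$ piece together with a stochastic cross-term; the square-root factor is controlled by the bounded-function assumption $a\leq f(\theta_t;\xi_t)+c\leq B$. The subtle point is that $r_{t+1}$ depends on $\xi_t$ through $v_t$, which blocks direct use of conditional unbiasedness. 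I would resolve this by substituting $r_{t+1,i} = r_{t,i}/(1+2\eta v_{t,i}^2)$: the leading $r_{t,i}$ factor is $\mathcal F_{t-1}$-measurable, and the correction $(1+2\eta v_{t,i}^2)^{-1}-1 = O(\eta v_{t,i}^2)$ is treated as a higher-order remainder controlled by the uniform bound $|v_{t,i}|\leq G_\infty/(2\sqrt{a})$ coming from the assumed bounds on $\nabla f(\theta_t;\xi_t)$ and $f(\theta_t;\xi_t)+c$.

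The remaining pieces are controlled as follows. The stochastic cross-term, after Cauchy--Schwarz across coordinates and iterations combined with the bounded-variance assumption $\E\|\nabla f(\theta_t;\xi_t)-\nabla f(\theta_t)\|^2\leq \sigma_g^2$, produces the $\sigma_g\sqrt{nT}$ contribution. The momentum residual $\mu\,\nabla f(\theta_t)^\top(r_{t+1}\odot m_t)$ is handled by Young's inequality: a small fraction is absorbed into the main $\sum_i r_{t+1,i}[\partial_i f(\theta_t)]^2$ sum (this is what generates the $(1-\mu)^{-2}$-type prefactor), while the rest is converted via $r_t\odot m_t = (\theta_{t-1}-\theta_t)/(2\eta)$ together with $r_{t+1,i}/r_{t,i}\leq 1$ into a displacement term. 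Together with the quadratic contribution $\tfrac{L}{2}\|\theta_{t+1}-\theta_t\|^2$, these are summed using the energy-stability estimate \eqref{rev1+}, contributing a term of order $\tfrac{nL(f(\theta_0)+c)}{(1-\mu)^2}$ that accounts for the $C_2 n$ piece. Telescoping from $t=0$ to $T-1$, bounding $f(\theta_T)\geq f(\theta^*)$, taking expectation, dividing by $\eta T$, and finally invoking $r_{t+1,i}\geq \min_j r_{T,j}$ yields the stated inequality. The main obstacle throughout is the $\xi_t$-measurability of $r_{t+1}$; the factorization $r_{t+1,i} = r_{t,i}(1+2\eta v_{t,i}^2)^{-1}$ resolves it cleanly, but careful bookkeeping is required to verify that the many $\eta$-dependent correction terms are all absorbed into the claimed constants $C_1,C_2,C_3$.
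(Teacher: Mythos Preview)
Your overall architecture---descent lemma, the split $m_{t+1}=v_t+\mu m_t$, control of $\sum_t\|\theta_{t+1}-\theta_t\|^2$ via the energy estimate \eqref{rev1+}, and pulling out $\min_j r_{T,j}$ at the end---matches the paper's, and the momentum residual can indeed be handled as you describe. The gap is in the stochastic cross-term. You lag $r_{t+1,i}\to r_{t,i}$ to restore $\mathcal F_{t-1}$-measurability, but overlook that the factor $(2\sqrt{f(\theta_t;\xi_t)+c})^{-1}=1/(2\tilde F_t)$ is \emph{also} $\xi_t$-measurable. After your lagging, the cross-term still contains $(g_{t,i}-\partial_i f(\theta_t))/\tilde F_t$, and $\E_{\xi_t}\bigl[(g_{t,i}-\partial_i f(\theta_t))/\tilde F_t\bigr]\neq 0$ in general, so conditional unbiasedness does \emph{not} annihilate the leading piece. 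Your Cauchy--Schwarz fallback would pair $\|g_t-\nabla f(\theta_t)\|$ against $\|r_{t+1}\odot\nabla f(\theta_t)\|$, but $\sum_t\|r_{t+1}\odot\nabla f(\theta_t)\|^2$ is essentially the quantity you are trying to bound---it is not a priori $O(n/\eta)$ uniformly in $T$. The best one obtains this way (e.g.\ via Young's inequality and absorbing half into the main sum) is an $O(\sigma_g^2)$ term that is constant in $T$, which is strictly weaker than the stated $C_3\sigma_g\sqrt{nT}/(\eta T)=O(\sigma_g/\sqrt T)$ and also violates the stated independence of $C_1,C_2$ from $\sigma_g$.

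The paper fixes this by lagging the \emph{combined} coefficient. Setting $\eta_t=\eta/\tilde F_t$ so that $2\eta r_{t+1}v_t=\eta_t r_{t+1}g_t$, it writes
\[
-\nabla f(\theta_t)^\top\eta_t r_{t+1}g_t=-\nabla f(\theta_t)^\top\eta_{t-1}r_t g_t+\nabla f(\theta_t)^\top(\eta_{t-1}r_t-\eta_t r_{t+1})g_t.
\]
Now $\eta_{t-1}r_t$ is fully $\mathcal F_{t-1}$-measurable, so $\E_{\xi_t}\bigl[\nabla f(\theta_t)^\top\eta_{t-1}r_t g_t\bigr]=\nabla f(\theta_t)^\top\eta_{t-1}r_t\nabla f(\theta_t)$ is the main positive term, bounded below by $(\eta/\sqrt B)\min_i r_{T,i}\|\nabla f(\theta_t)\|^2$. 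The correction $\eta_{t-1}r_t-\eta_t r_{t+1}$ is split further so that every remaining cross-term carries a factor $r_{t+1}g_t$ (or $r_t-r_{t+1}$), for which $\sum_t\|r_{t+1}g_t\|^2\le 2nB\tilde F_0^2/\eta$ follows directly from the energy identity $r_{t,i}-r_{t+1,i}=2\eta r_{t+1,i}v_{t,i}^2$. This is precisely what makes Cauchy--Schwarz deliver the claimed $\sigma_g\sqrt{nT}$ rather than a $\sigma_g^2$ constant.
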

The proof and the precise expressions for $C_1, C_2, C_3$ are deferred to Appendix \ref{pf2c}.

The question of how $r_{T}$ depends on $T$ is theoretically interesting but subtle to characterize. Numerically we observe that for $\eta$ less than a threshold, $r_{T, i}$ tends either to a positive number ($r^*_i>0$) or to zero much slower than $1/\sqrt{T}$ (See Figure \ref{fig:minr} in Appendix \ref{spep}). The above result is meaningful in both cases. In the case $r^*_i>0$, the rate of $O(1/\sqrt{T})$ is recovered from this bound. Next we shall identify a sufficient condition for ensuring  such lower positive threshold for $r_{T, i}$.

\subsection{Lower bound of the energy}
 First note that the $L$-smoothness of $f(\theta)$ implies the $L_F$-smoothness of $F(\theta)=\sqrt{f(\theta)+c}$ with
\begin{equation}\label{LF}
L_F=
\frac{1}{2\sqrt{f(\theta^*)+c}} \left( L+ \frac{G^2_\infty}{2(f(\theta^*)+c)}\right).     
\end{equation}
This will be used in our analysis of the asymptotic behavior of the energy. The result is stated below. 
\begin{theorem}[Lower bound of $r_T$]
\label{thm2r}
Under the same assumptions as in Theorem \ref{thm2c}, 
we have
\begin{equation}\label{minr}
\min_i\E[r_{T,i}]\geq \max\{\sqrt{f(\theta^*)+c}-\eta D_1-\mu D_2-\sigma D_3,0\}   
\end{equation}
where $\sigma=\max\{\sigma_f,\sigma_g\}$,  with $L_F$ given in (\ref{LF}) and
\begin{align*}
&D_1 = \frac{L_F n (f(\theta_0)+c)}{(1-\mu)^2}, \quad D_2 =\frac{1}{2}\bigg(1+\frac{1}{(1-\mu)^2}\bigg)n\sqrt{f(\theta_0)+c},\\
&D_3 = \frac{1}{2a^{1/2}} +\sqrt{\frac{G_\infty^2}{4a^3}+\frac{1}{a}} 
\frac{\sqrt{f(\theta_0)+c}}{1-\mu}\sqrt{\eta nT}.
\end{align*}
Moreover, in the absence of noise, we have 
\begin{equation}\label{aa}
\min_i r_{T,i}>\min_i r^*_i>0 \quad\text{if}\quad \eta D_1 +\mu D_2 <\sqrt{f(\theta^*)+c}.
\end{equation}
\end{theorem}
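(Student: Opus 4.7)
The plan is to compare each energy coordinate $r_{T,i}$ with $F(\theta_T):=\sqrt{f(\theta_T)+c}$ and then invoke $F(\theta_T)\geq\sqrt{f(\theta^*)+c}$. First I would derive the per-coordinate identity
\[
r_{t+1,i}-r_{t,i}=v_{t,i}(\theta_{t+1,i}-\theta_{t,i})+2\eta\mu\,r_{t+1,i}v_{t,i}m_{t,i},
\]
by combining $r_{t+1,i}(1+2\eta v_{t,i}^2)=r_{t,i}$ with $\theta_{t+1,i}-\theta_{t,i}=-2\eta r_{t+1,i}(v_{t,i}+\mu m_{t,i})$. Summing over $i$ converts the first right-hand term into $\nabla F_t(\theta_t)^\top(\theta_{t+1}-\theta_t)$ with $F_t(\theta):=\sqrt{f(\theta;\xi_t)+c}$ and $v_t=\nabla F_t(\theta_t)$; decomposing $\nabla F_t=\nabla F+(\nabla F_t-\nabla F)$ and applying $L_F$-smoothness of $F$ (with $L_F$ from \eqref{LF}) produces a lower bound whose deterministic leading part is $F(\theta_{t+1})-F(\theta_t)-\tfrac{L_F}{2}\|\theta_{t+1}-\theta_t\|^2$, plus a stochastic inner-product error and the momentum cross-term.

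Summing in $t=0,\dots,T-1$, taking expectations, and telescoping $\sum_t[F(\theta_{t+1})-F(\theta_t)]=F(\theta_T)-F(\theta_0)$ reduces the task to bounding the three error sources. The quadratic remainder is dominated by $\tfrac{L_F}{2}\E\sum_t\|\theta_{t+1}-\theta_t\|^2\leq\eta L_F n(f(\theta_0)+c)/(1-\mu)^2=\eta D_1$ via the energy-stability estimate of Theorem \ref{prop2}(i). The momentum cross-term is controlled $T$-independently by the weighted AM--GM inequality $2\eta\mu|r_{t+1,i}v_{t,i}m_{t,i}|\leq \eta\mu\epsilon\,r_{t+1,i}v_{t,i}^2+(\eta\mu/\epsilon)r_{t+1,i}m_{t,i}^2$, combined with the dissipation identity $2\eta\sum_t r_{t+1,i}v_{t,i}^2=r_{0,i}-r_{T,i}$ and an Abel-type estimate $\sum_t r_{t+1,i}m_{t,i}^2\leq r_{0,i}/(2\eta(1-\mu)^2)$ (derived from $m_t=\sum_{s\geq 0}\mu^s v_{t-1-s}$ and monotonicity of $r$); optimizing $\epsilon=1/(1-\mu)$ and summing over $i$ gives the $\mu D_2$ bound. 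The stochastic error is handled by Cauchy--Schwarz pairing $\nabla F_t-\nabla F$ against $\theta_{t+1}-\theta_t$: the $\ell^2$-in-$t$ norm of the fluctuation is $O(\sigma\sqrt{T})$ by the variance assumptions (the $G_\infty^2/a^3$ and $1/a$ coefficients of $D_3$ arise from the two pieces of $\nabla F_t-\nabla F$), while $\E\sum_t\|\theta_{t+1}-\theta_t\|^2$ contributes the $\sqrt{\eta n}$ factor, producing exactly the $\sigma\sqrt{\eta nT}$ structure in $D_3$.

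Finally, to pass from the resulting bound on $\sum_i\E[r_{T,i}]$ to one on $\min_i\E[r_{T,i}]$, I would exploit monotonicity: since $\E[r_{T,j}]\leq\E[r_{0,j}]=\sqrt{f(\theta_0)+c}$ by Theorem \ref{prop2}, the rearrangement $\E[r_{T,i}]\geq\sum_j\E[r_{T,j}]-(n-1)\sqrt{f(\theta_0)+c}$ exactly cancels the $n\sqrt{f(\theta_0)+c}$ offset from $\sum_j\E[r_{0,j}]$, leaving the coordinate-uniform bound $\E[r_{T,i}]\geq\E[F(\theta_T)]-\eta D_1-\mu D_2-\sigma D_3\geq\sqrt{f(\theta^*)+c}-\eta D_1-\mu D_2-\sigma D_3$ for every $i$, whence \eqref{minr}. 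For \eqref{aa}, in the noiseless case $\sigma_f=\sigma_g=0$ the $\sigma D_3$ term vanishes, leaving a $T$-independent positive bound under the stated hypothesis; together with the strict decrease of $r_{T,i}$ and its convergence to $r_i^*\geq 0$ from Theorem \ref{prop1}, this forces $r_i^*>0$, yielding $\min_i r_{T,i}>\min_i r_i^*>0$. The main obstacle will be the noise accounting: obtaining the $\sqrt{T}$ (rather than $T$) scaling of the stochastic contribution requires pairing the gradient fluctuation with the summable sequence $\{\|\theta_{t+1}-\theta_t\|^2\}$ via Cauchy--Schwarz instead of using pointwise bounds, and because the normalization $1/\sqrt{f(\cdot;\xi)+c}$ inside $v$ couples $\sigma_f$ and $\sigma_g$, the perturbation of $\nabla F_t$ must be decomposed into two pieces, one controlled by each variance separately.
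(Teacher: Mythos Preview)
Your outline mirrors the paper's proof almost exactly: the same per-coordinate identity linking $r_{t+1,i}-r_{t,i}$ to $v_{t,i}(\theta_{t+1,i}-\theta_{t,i})$ plus a momentum cross-term, the same use of $L_F$-smoothness and the telescoping of $F(\theta_t)$, the same Cauchy--Schwarz treatment of the stochastic error $(\nabla F-v_t)^\top(\theta_{t+1}-\theta_t)$, and the same passage from $\sum_i\E[r_{T,i}]$ to $\min_i\E[r_{T,i}]$ via monotonicity of $r$.

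Two small bookkeeping points to fix before execution. First, your choice $\epsilon=1/(1-\mu)$ in the weighted AM--GM actually gives a \emph{tighter} constant $\mu n\sqrt{f(\theta_0)+c}/(1-\mu)$ than the stated $\mu D_2$; the paper simply takes $\epsilon=1$, which yields the $\tfrac12(1+1/(1-\mu)^2)$ shape in $D_2$ directly. Second, in the stochastic setting $r_{0,j}=\sqrt{f(\theta_0;\xi_0)+c}$, so $\E[r_{0,j}]=\E[\tilde F_0]\leq\sqrt{f(\theta_0)+c}$ with \emph{strict} inequality in general; the cancellation you describe is therefore not exact, and the residual $F(\theta_0)-\E[\tilde F_0]\leq\sigma_f/(2\sqrt{a})$ is precisely what produces the $1/(2a^{1/2})$ piece of $D_3$---it does not come from the decomposition of $\nabla F_t-\nabla F$ (that decomposition accounts only for the $\sqrt{G_\infty^2/(4a^3)+1/a}$ factor).
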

The derivation of $L_F$ and the proof for Theorem \ref{thm2r} are deferred to Appendix \ref{pf2r}.

\begin{remark}
1. (\ref{aa}) is only a sufficient condition. We observe from our experimental results that the upper bound for $\eta$ to guarantee the positiveness of $r^*_i$ can be much larger (See Figure \ref{fig:minr} in Appendix \ref{spep}).

2. 
In Theorem \ref{thm2r}, we measure how far $r^*$ can deviate from $F(\theta^*)$ in the worst situation. 
In the case of no momentum and no noise, we have 
$$
\min_i r_{T,i}>\min_i r^*_i>0 
\quad\text{if}\quad \eta<\frac{\sqrt{f(\theta^*)+c}}{D_1}.
$$
This recovers the result in \cite{LT20} for the deterministic AEGD. 
\end{remark}

\section{Numerical experiments} \label{sec6} 
In this section, we empirically evaluate AEGDM and compare it with SGDM, Adam, and AEGD on several benchmark problems. We show that in the deterministic case, AEGDM does speed up the convergence of AEGD; in the stochastic case like training deep neural network tasks, the fluctuation of test accuracy of AEGDM is much smaller than that of AEGD (which confirms that AEGDM reduces the variance of AEGD), while AEGDM still achieves comparable final generalization performance of AEGD, which is as well as or  better than SGDM. Adam makes rapid initial progress but does not generalize well at the end as the other three methods.


We begin with testing the deterministic counterpart of the method on the 2D-Rosenbrock function, then we conduct the stochastic version 
on several image classification tasks, including three datasets: MNIST\footnote{\url{http://yann.lecun.com/exdb/mnist/}}, CIFAR-10 \& CIFAR-100 \cite{KH09}; and six convolution neural network architectures: LeNet-5 \cite{LB98}, VGG-16 \cite{SZ15}, ResNet-32 \cite{HZ16}, DenseNet-121 \cite{HL+17}, SqueezeNet \cite{IM+16}, GoogleNet \cite{SL+15}. We choose these architectures because of their broad importance and superior performance on several benchmark tasks.

In all experiments, we set the parameter $c = 1$ for both AEGD and AEGDM. The momentum parameter $\mu$ is set to the default value $0.9$ for both SGDM and AEGDM. For Adam, we also directly apply the default hyperparameter values with $\beta_1 = 0.9$ and $\beta_2 = 0.999$. For each method, we only tune the base learning rate. Details about tuning learning rate for training deep neural networks are presented in Section \ref{img}.

\subsection{2D-Rosenbrock function}
We first compare AEGDM with AEGD and GD with momentum (GDM) on the 2D-Rosenbrock function defined by 
\begin{equation}\label{eq:rosenbrock}
  f(x_1,x_2)=(1-x_1)^2+100(x_2-x_1^2)^2.
\end{equation}
For this non-convex function, 
the global minimum $f^*=0$ achieved at $(1, 1)$ is inside a long, narrow, parabolic shaped flag valley. 
It is trivial to find the valley, but known to be difficult to converge to the global minimum. 

\begin{figure}[ht]
\begin{subfigure}[b]{0.5\linewidth}
\centering
\includegraphics[width=0.6\linewidth]{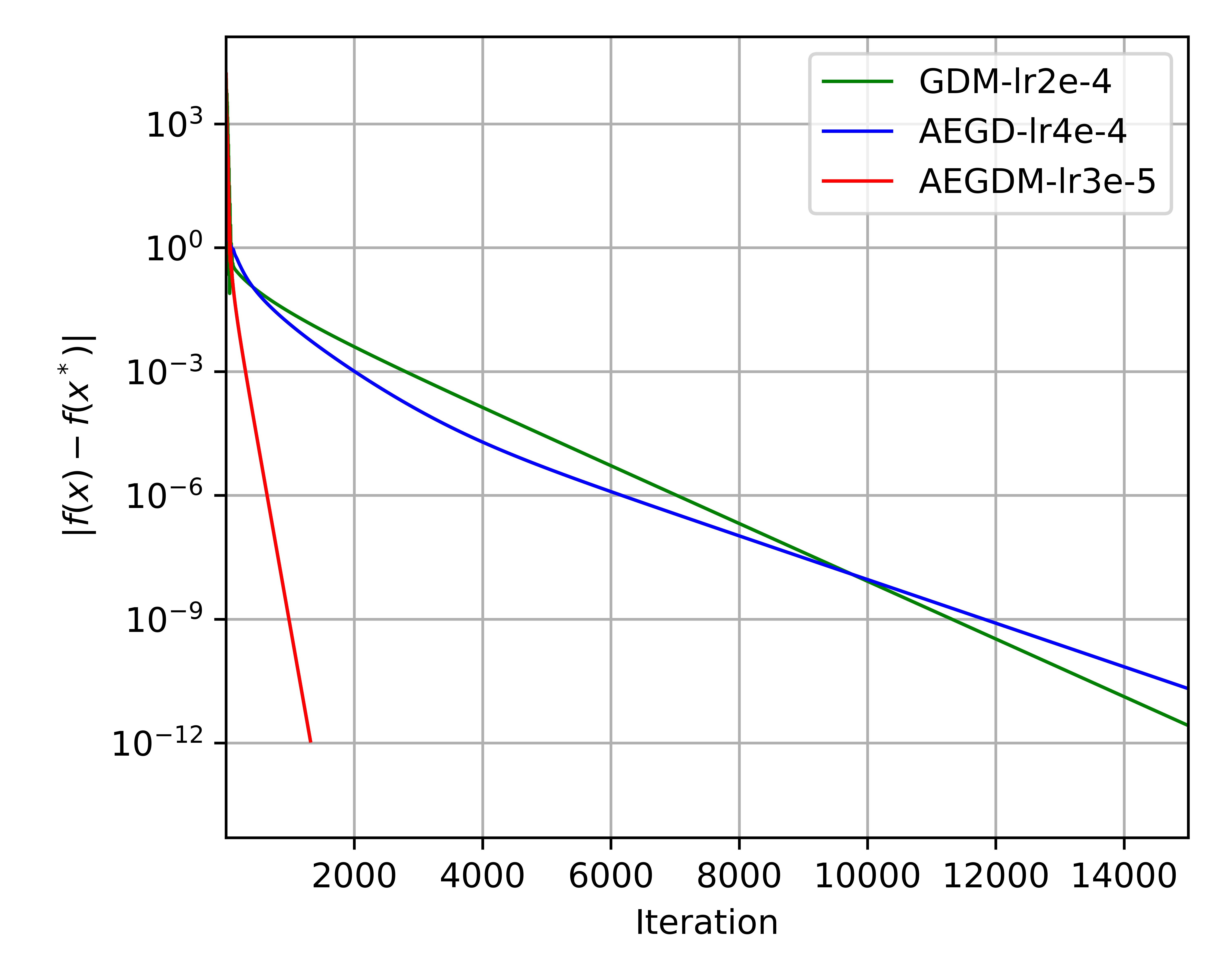}
\caption{Optimality gap}
\end{subfigure}%
\begin{subfigure}[b]{0.5\linewidth}
\centering
\includegraphics[width=0.6\linewidth]{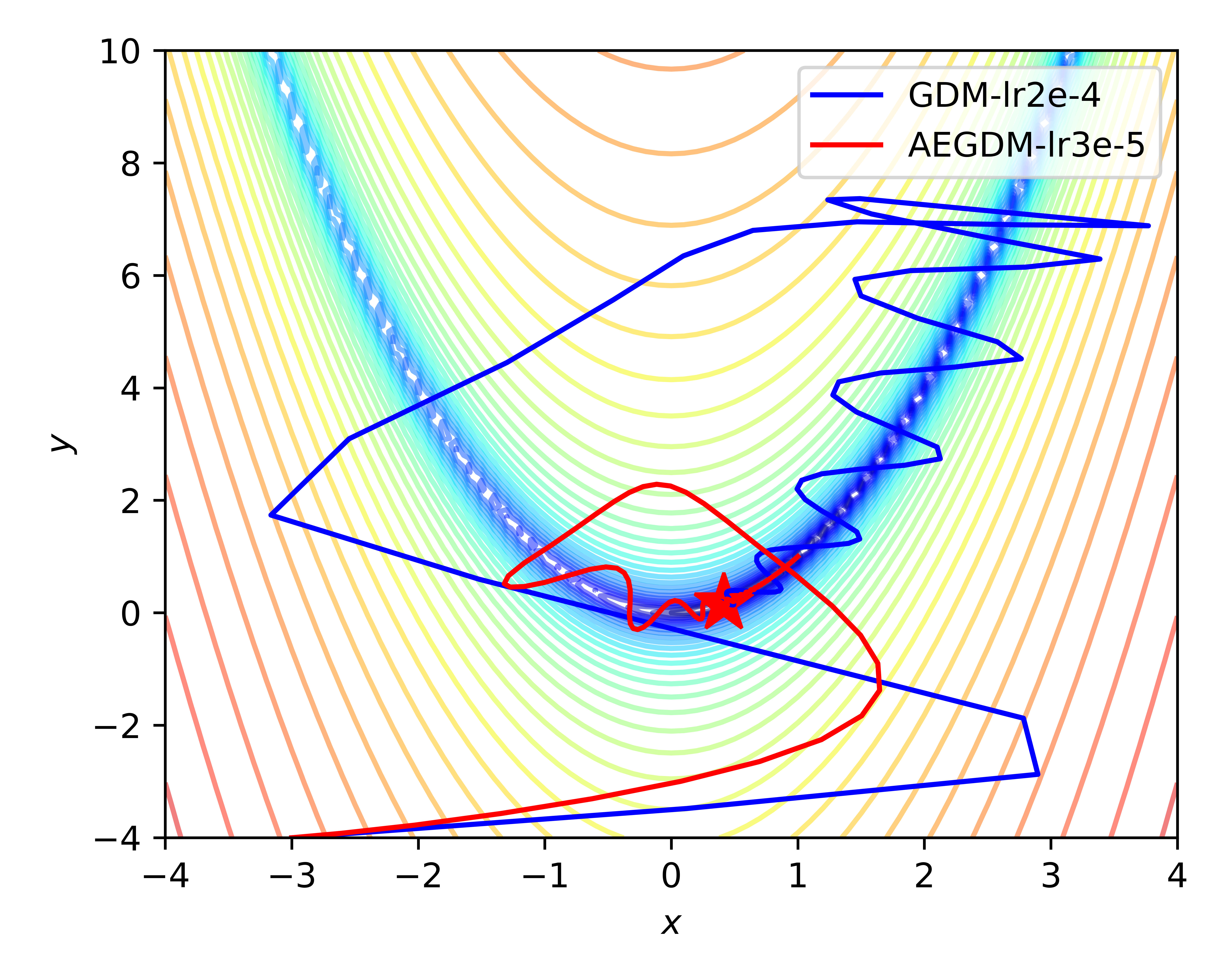}
\caption{Trajectory}
\end{subfigure}%
\caption{Comparison of the performance of three different methods on the 2D Rosenbrock function.}
\label{fig:rosen}
\end{figure}%

The initial point is set at $(-3, -4)$. We fine tune the step size and choose the one that achieves the fastest convergence speed for each method. Figure \ref{fig:rosen} (a) presents the optimality gap v.s. iteration of the three methods, the step size (represented by `lr') used for each method is also included. We see that the performance of AEGD is comparable with that of GDM, and AEGDM converges much faster than both of them. This confirms that momentum can speed up the convergence of AEGD. We also present the trajectory of GDM and AEGDM in Figure \ref{fig:rosen} (b), it can be seen that though the step size of AEGDM is a lot larger than that of GDM, the oscillation of AEGDM is much smaller, this explains why AEGDM converges much faster than GDM. 

\subsection{Image classification}\label{img}
Now we compare the performance of AEGDM with SGDM, Adam and AEGD on several image classification tasks, including LeNet-5 on MNIST; VGG-16, ResNet-32, DenseNet-121 on CIFAR-10; and SqueezeNet, GoogleNet on CIFAR-100. For experiments on MNIST, we run $50$ epochs with a minibatch size of $128$ and weight decay of $1\times 10^{-4}$. For experiments on CIFAR-10 and CIFAR-100, we employ the fixed budget of 200 epochs and reduce the learning rates by 10 after 150 epochs. Detailed settings like batch size, and weight decay were chosen as suggested for respective base architectures. We also summarize them in Appendix \ref{spep}. \footnote{We make our code available at \url{https://github.com/txping/AEGDM}.}


For each method, we fine tune the base learning rate and report the one that achieves the best final generalization performance. For SGDM, we search the base learning rate $\eta$ among $\{0.01, 0.05, 0.1, 0.2, 0.3\}$; for Adam, we search $\eta$ from $\{0.0001, 0.0003, 0.0005, 0.001, 0.002\}$; for AEGD, we search $\eta$ from $\{0.05, 0.1, 0.2, 0.3, 0.4\}$; for AEGDM, we search $\eta$ from:\\ $\{0.005, 0.008, 0.01, 0.02, 0.03\}$. We found that these choices work well for a wide range of tasks, including examples given below.

\begin{figure}[ht]
\begin{subfigure}[b]{0.33\linewidth}
\centering
\includegraphics[width=1\linewidth]{plots/cifar10_vgg16_train.png}
\caption{VGG-16, training loss}
\end{subfigure}%
\begin{subfigure}[b]{0.33\linewidth}
\centering
\includegraphics[width=1\linewidth]{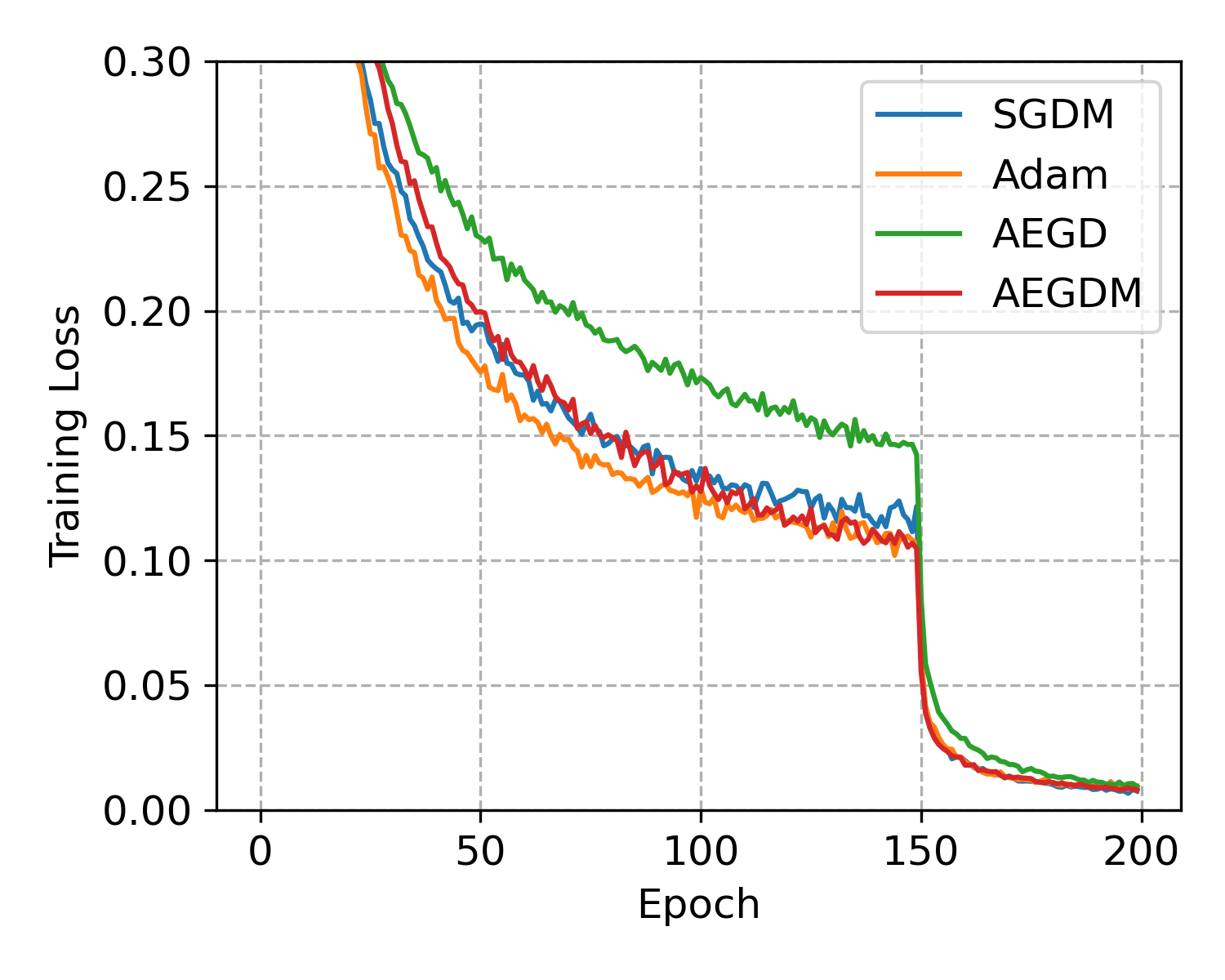}
\caption{ResNet-32, training loss}
\end{subfigure}%
\begin{subfigure}[b]{0.33\linewidth}
\centering
\includegraphics[width=1\linewidth]{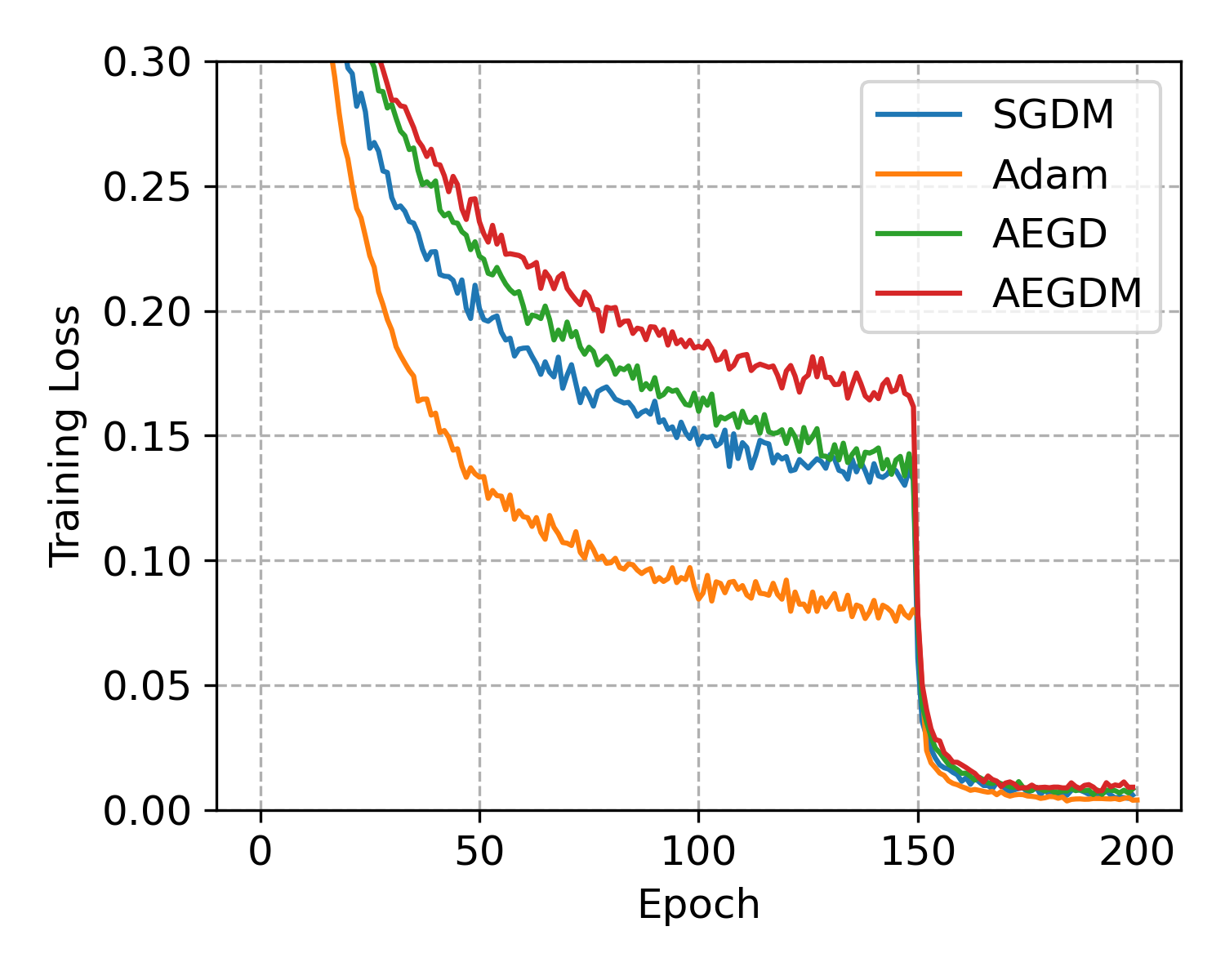}
\caption{DenseNet-121, training loss}
\end{subfigure}%
\newline
\begin{subfigure}[b]{0.33\linewidth}
\centering
\includegraphics[width=1\linewidth]{plots/cifar10_vgg16_test.png}
\caption{VGG-16, test accuracy}
\end{subfigure}%
\begin{subfigure}[b]{0.33\linewidth}
\centering
\includegraphics[width=1\linewidth]{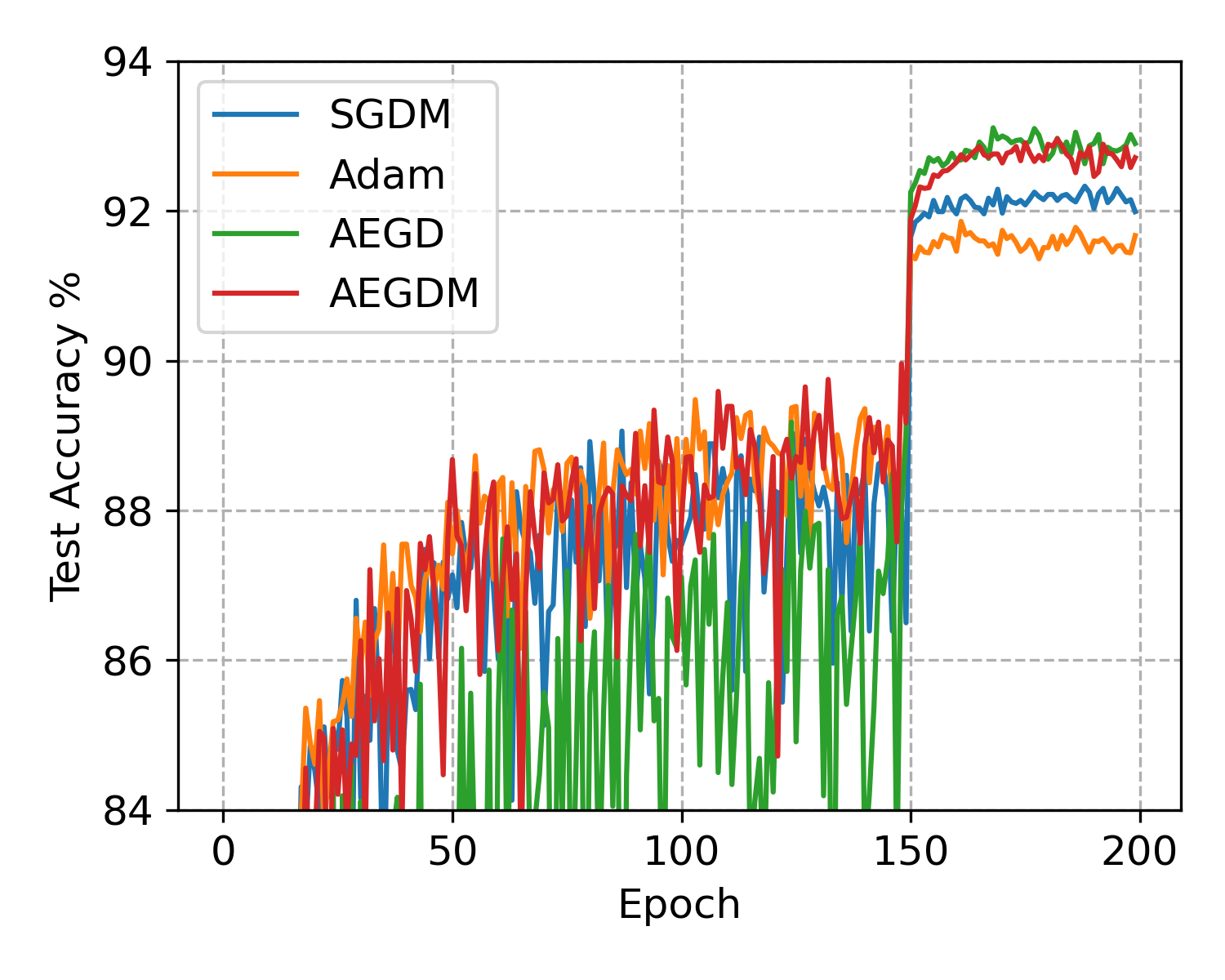}
\caption{ResNet-32, test accuracy}
\end{subfigure}%
\begin{subfigure}[b]{0.33\linewidth}
\centering
\includegraphics[width=1\linewidth]{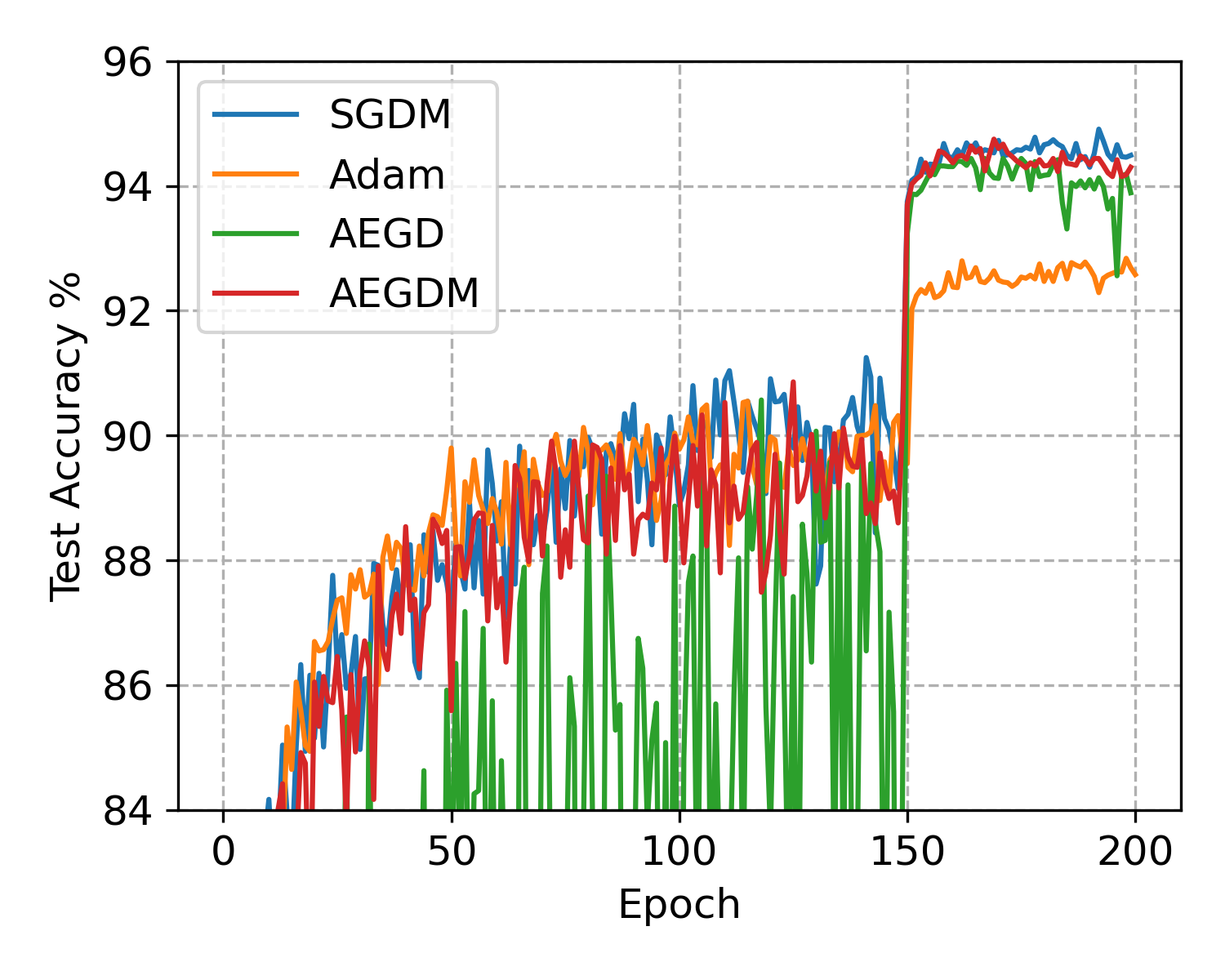}
\caption{DenseNet-121, test accuracy}
\end{subfigure}%
\captionsetup{format=hang}
\caption{Training loss and test accuracy for VGG-16, ResNet-32 and DenseNet-121 on CIFAR-10}
\label{fig:cifar10}
\end{figure}

{\bf MNIST} Figure \ref{fig:cifar100} (a) (d) show the training loss and test accuracy against epochs of each method. We see that in the training part, AEGDM and SGDM convergence faster and achieve lower training loss than AEGD and Adam.
For test accuracy, we observe an obvious fast initial progress of AEGDM, and it generalizes as well as SGDM at the end. While AEGD and Adam still have small oscillations by epoch 50. In addition,
AEGDM gives the highest test accuracy (99.3\%) among all the methods.

{\bf CIFAR-10} From Figure \ref{fig:cifar10} we can see that the oscillation of AEGD in test accuracy is significantly reduced by AEGDM. Though Adam makes rapid progress in the early stage, the generalization performance of Adam become worse than SGDM, AEGD and AEGDM after epoch 150 when the learning rate decays. In addition, we observe that AEGDM obtain better final generalization performance than SGDM in some tasks. For ResNet-32, AEGDM even surpass SGDM by $\sim 1\%$ in test accuracy. 

{\bf CIFAR-100} The results are included in Figure \ref{fig:cifar100}. We see that the overall performance of each method is similar to that on CIFAR-10. For SqueezeNet, AEGDM gives the highest test accuracy ($71.33\%$) among the four methods. For GoogleNet, AEGD outperforms SGDM by $\sim 0.5\%$ in test accuracy after learning rate decays.

\begin{figure}[ht]
\begin{subfigure}[b]{0.33\linewidth}
\centering
\includegraphics[width=1\linewidth]{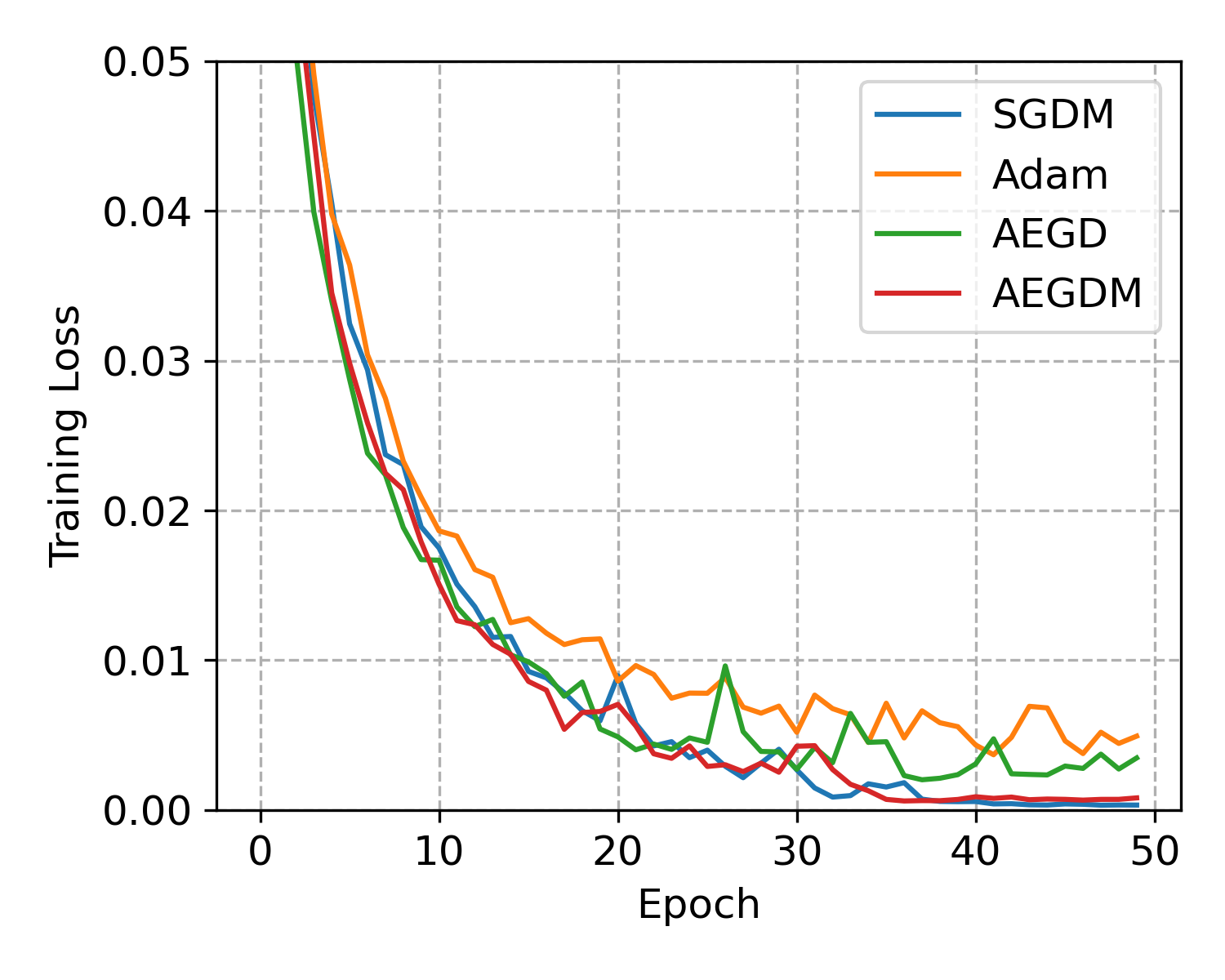}
\caption{LeNet-5, training loss}
\end{subfigure}%
\begin{subfigure}[b]{0.33\linewidth}
\centering
\includegraphics[width=1\linewidth]{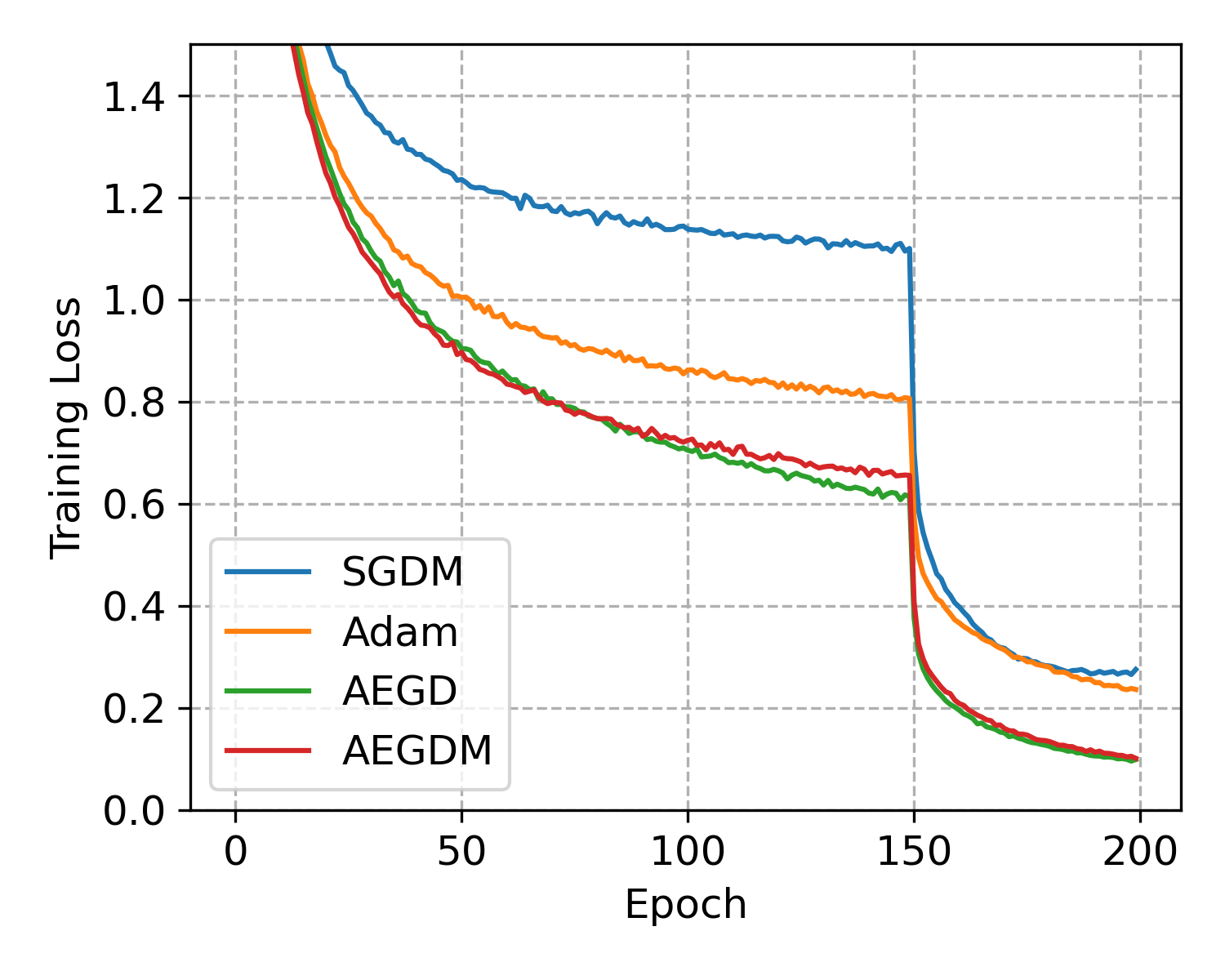}
\caption{SqueezeNet, training loss}
\end{subfigure}%
\begin{subfigure}[b]{0.33\linewidth}
\centering
\includegraphics[width=1\linewidth]{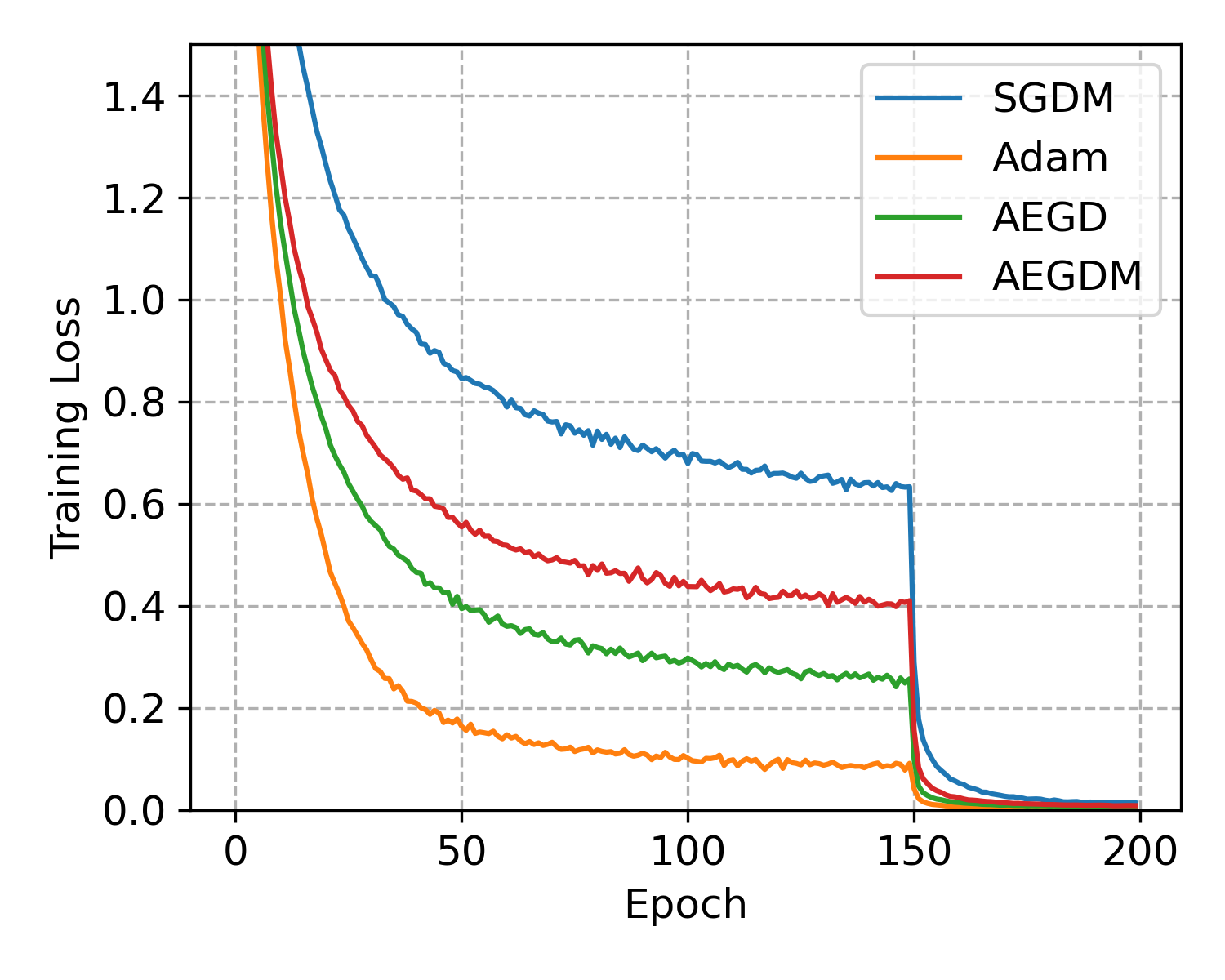}
\caption{GoogleNet, training loss}
\end{subfigure}%
\newline
\begin{subfigure}[b]{0.33\linewidth}
\centering
\includegraphics[width=1\linewidth]{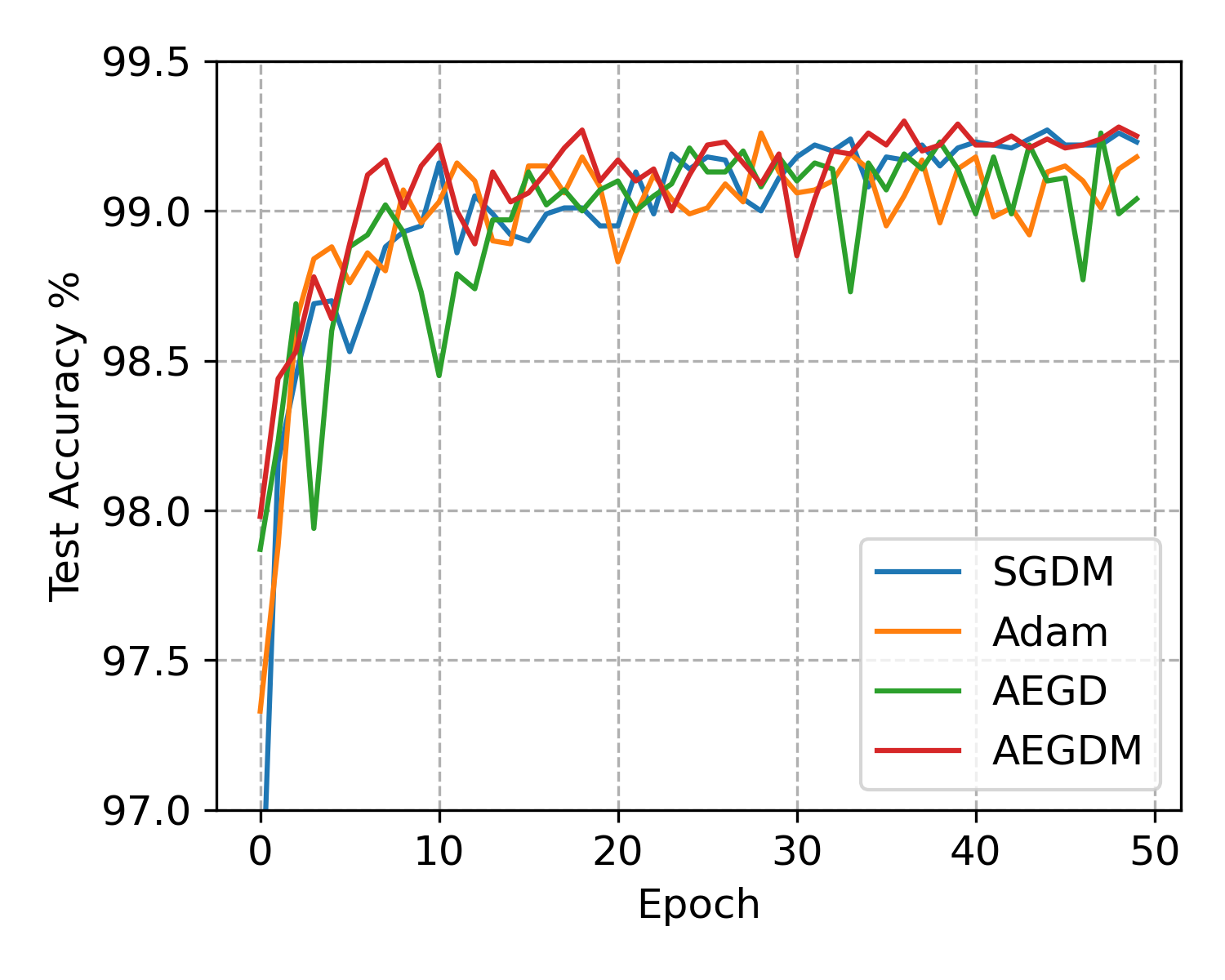}
\caption{LeNet-5, test accuracy}
\end{subfigure}%
\begin{subfigure}[b]{0.33\linewidth}
\centering
\includegraphics[width=1\linewidth]{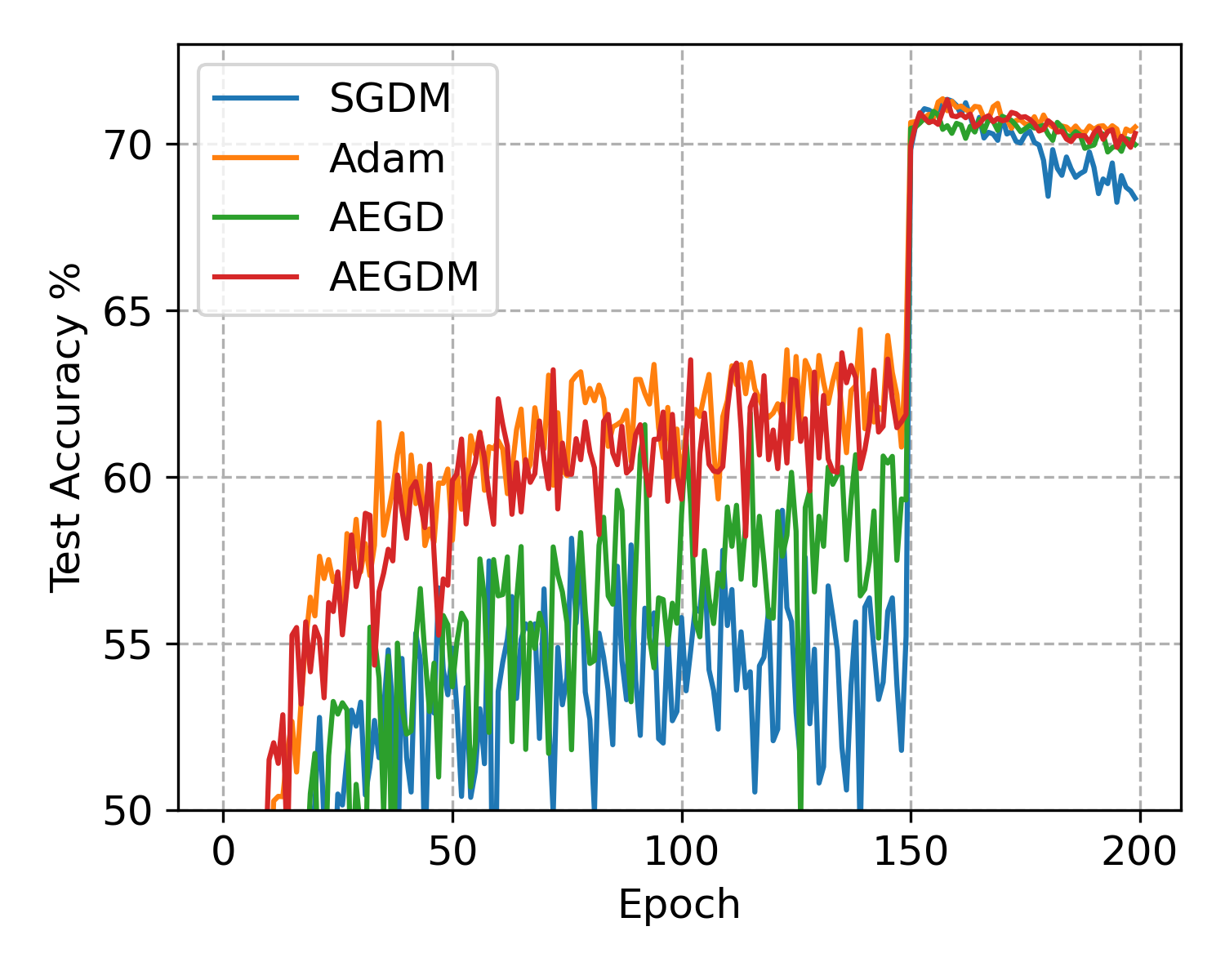}
\caption{SqueezeNet, test accuracy}
\end{subfigure}%
\begin{subfigure}[b]{0.33\linewidth}
\centering
\includegraphics[width=1\linewidth]{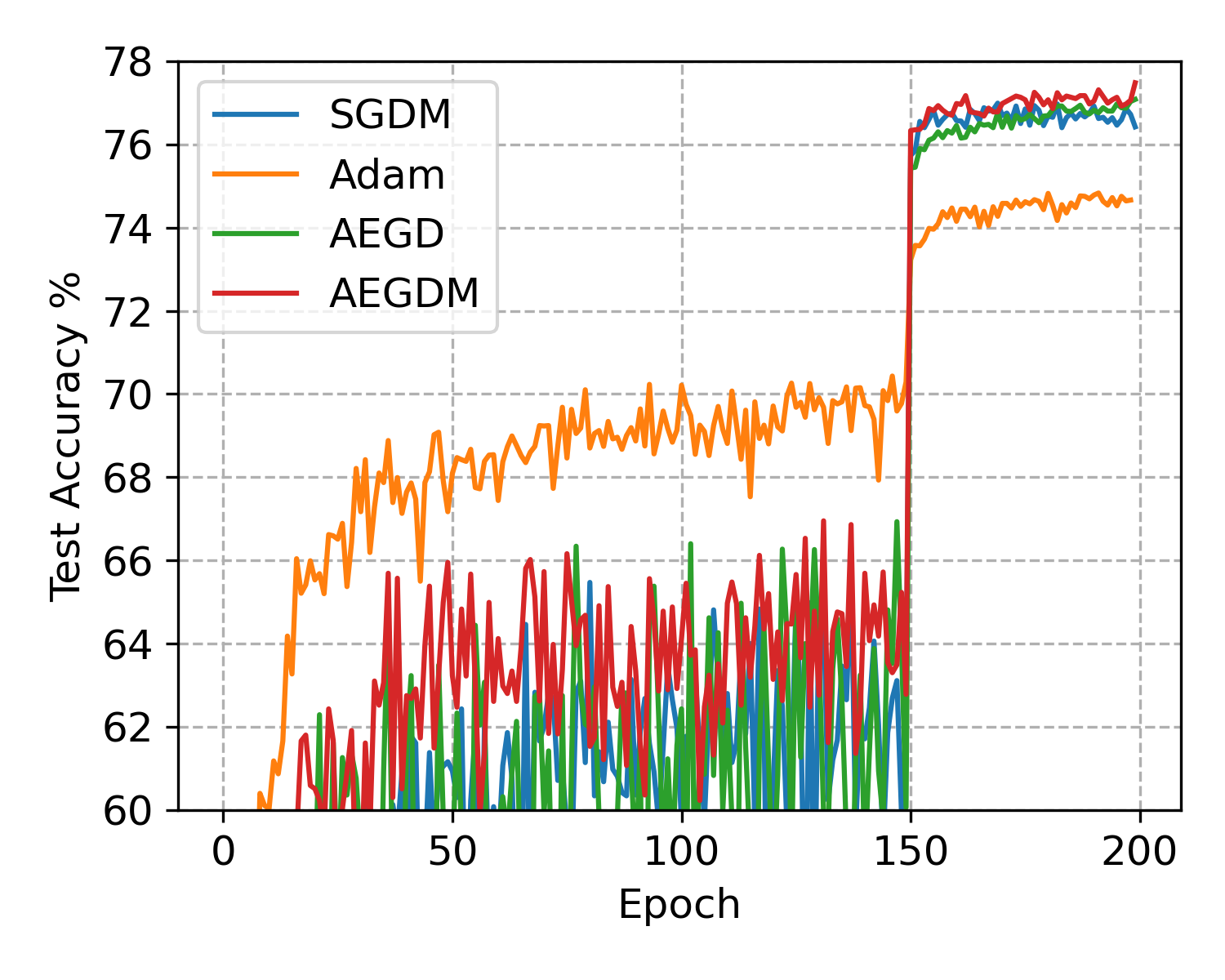}
\caption{GoogleNet, test accuracy}
\end{subfigure}%
\captionsetup{format=hang}
\caption{Training loss and test accuracy for LeNet-5 on MNIST and SqueezeNet, GoogleNet on CIFAR-100}
\label{fig:cifar100}
\end{figure}

The above results are obtained by fine tuning the base learning rate for each method in each task. While in practice, tuning hyperparameters can be tedious, thus methods require little tuning are more desirable. Therefore, we also conduct comparison (with more methods involved, including AdaBound, AdaBelief, Radam, Yogi) where the default base learning rate is used for each method in all tasks. The results and detailed setting are presented in the Appendix \ref{spep}, from which we see that AEGDM with default base learning rate (0.01) generalizes better than SGDM with default base learning rate (0.1) in all tasks.

\section{Discussion} \label{sec7} 
We have developed AEGDM,  a gradient method adapted with energy and momentum for solving stochastic optimization problems.   
The method integrates AEGD introduced in \cite{LT20} with momentum, featuring unconditional energy stability and guaranteed regret bound.  Our experiments show that
AEGDM improves AEGD by speeding up the convergence in the deterministic setting and reducing the variance in the stochastic setting. By comparison with SGDM and Adam, we show the potential of AEGD(M) in training deep neural networks to get faster convergence or better generalization performance.


The convergence rates are energy-dependent, hence it is highly desired to obtain sharper estimates on the asymptotic behavior of $r_t$ as $t\to \infty$ relative to the base learning rate $\eta$. 
The result in Theorem \ref{thm2r} as an indication of a lower bound for $r_t$ is not optimal at all. We expect a positive lower bound for $r_t$ at least for small time steps in the deterministic case. This may  be seen by analyzing the ODE system obtained from the scheme when the time step tends to zero. In fact, a global version of such system can be derived from (\ref{aegdm}) as 
\begin{subequations}\label{ode}
\begin{align}
\dot \theta &= -2rm,\\
\dot r &= -2r|v|^2,\\
0 &= -(1-\mu) m + v,
\end{align}
\end{subequations}
where $v(s)=\nabla F(\theta(s))$, with $s$ as the continuous time variable, $F(\theta)=\sqrt{f(\theta)+c}$. Observe that 
\begin{align*}
\frac{d}{ds}\left[ r(s)-(1-\mu)F(\theta(s))\right] 
&=-2r|v|^2-(1-\mu)\langle \nabla F(\theta),\dot\theta\rangle\\
&=-2r|v|^2-(1-\mu)\langle v, -2rm\rangle=0. 
\end{align*}
Using the choice $r_0=F(\theta_0)$, we obtain 
$$
r(s)=(1-\mu) F(\theta(s))+\mu F(\theta_0).
$$
Hence for any $s>0$, 
$$
r(s)\geq (1-\mu) \min F(\theta)+\mu F(\theta_0)\geq \min F(\theta)>0. 
$$
We leave more refined analysis in future work. 
\bigskip

{\noindent\bf Data availability:} 
The data that support the findings of this study are
publicly available online at \url{http://yann.lecun.com/exdb/mnist/} and \url{https://www.cs.toronto.edu/~kriz/cifar.html}.

\section*{Acknowledgments}  
This work was supported by the National Science Foundation under Grant DMS1812666.

\bibliographystyle{amsplain}
\bibliography{ref}




\appendix
In this appendix, we present technical proofs of theoretical results in this work.
\section{Auxiliary lemmas and notation} 
\begin{lemma} \label{lemA1}
Set $G_i(T, \mu)=
\sum_{t=1}^{T}r_{t, i}m_{t, i}^2$.
Then 
\begin{equation}\label{GTM}
G_i(T, \mu)\leq 
\frac{1}{(1-\mu)^2}G_i(T, 0)
\end{equation}
provided $\mu<1$.
\end{lemma}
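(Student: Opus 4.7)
My plan is to expand the momentum variable explicitly via its recursion, apply Cauchy--Schwarz to pass from $m_{t,i}^2$ to a weighted sum of $v^2$ values, and then exploit the monotonicity of $r_{t,i}$ (which follows from the energy update $r_{t+1,i}=r_{t,i}/(1+2\eta v_{t,i}^2)$ established in Theorem \ref{prop1}) after interchanging the order of summation.

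First, iterating the recurrence $m_{t+1,i}=\mu m_{t,i}+v_{t,i}$ with $m_{0,i}=0$ yields
\[
m_{t,i}=\sum_{k=0}^{t-1}\mu^{k}v_{t-1-k,i}.
\]
Split the weights as $\mu^{k}=\mu^{k/2}\cdot\mu^{k/2}$ and apply the Cauchy--Schwarz inequality to obtain
\[
m_{t,i}^{2}\le\Bigl(\sum_{k=0}^{t-1}\mu^{k}\Bigr)\Bigl(\sum_{k=0}^{t-1}\mu^{k}v_{t-1-k,i}^{2}\Bigr)\le\frac{1}{1-\mu}\sum_{k=0}^{t-1}\mu^{k}v_{t-1-k,i}^{2},
\]
which already produces one factor of $(1-\mu)^{-1}$; note that here we use $\mu<1$.

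Next, multiply by $r_{t,i}$, sum over $t$, and switch the order of summation by setting $s=t-1-k$, so that the pair $(t,k)$ with $1\le t\le T$ and $0\le k\le t-1$ becomes the pair $(s,k)$ with $0\le s\le T-1$ and $0\le k\le T-1-s$. This yields
\[
\sum_{t=1}^{T}r_{t,i}m_{t,i}^{2}\le\frac{1}{1-\mu}\sum_{s=0}^{T-1}v_{s,i}^{2}\sum_{k=0}^{T-1-s}\mu^{k}r_{s+1+k,i}.
\]
The key monotonicity bound $r_{s+1+k,i}\le r_{s+1,i}$ for every $k\ge 0$, combined with the geometric sum $\sum_{k=0}^{T-1-s}\mu^{k}\le(1-\mu)^{-1}$, produces the second factor of $(1-\mu)^{-1}$ and gives
\[
\sum_{t=1}^{T}r_{t,i}m_{t,i}^{2}\le\frac{1}{(1-\mu)^{2}}\sum_{s=0}^{T-1}r_{s+1,i}v_{s,i}^{2}=\frac{1}{(1-\mu)^{2}}G_{i}(T,0),
\]
since $G_{i}(T,0)=\sum_{t=1}^{T}r_{t,i}v_{t-1,i}^{2}$ after reindexing.

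Overall the argument is short; the only bookkeeping that might trip one up is the index shift after Cauchy--Schwarz, where one must be careful that the $r$ factor paired with $v_{s,i}^{2}$ is $r_{s+1+k,i}$ (not $r_{t,i}$ at some generic $t$), so that monotonicity may be invoked to pull it down to $r_{s+1,i}$ uniformly in $k$.
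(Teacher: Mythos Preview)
Your proof is correct, and it follows a genuinely different route from the paper's argument. The paper does not unroll $m_{t,i}$ explicitly; instead it writes $m_{t+1,i}^2=(\mu m_{t,i}+v_{t,i})^2$, applies Young's inequality $(a+b)^2\le(1+\epsilon)a^2+(1+\epsilon^{-1})b^2$ together with $r_{t+1,i}\le r_{t,i}$ to obtain the one-step recursion
\[
G_i(T,\mu)\le(1+\epsilon)\mu^2\,G_i(T-1,\mu)+(1+\epsilon^{-1})\,G_i(T,0),
\]
iterates it down to $G_i(0,\mu)=0$, sums the resulting geometric series, and finally optimizes over $\epsilon$ by choosing $\epsilon=(1-\mu)/\mu$ to land on $(1-\mu)^{-2}$. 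Your approach instead expands $m_{t,i}=\sum_{k}\mu^{k}v_{t-1-k,i}$ in closed form, applies Cauchy--Schwarz with the splitting $\mu^{k}=\mu^{k/2}\cdot\mu^{k/2}$, and then swaps the order of summation so that monotonicity of $r$ can be used pointwise in $k$. Your route is arguably more transparent---the two $(1-\mu)^{-1}$ factors appear directly from two geometric sums, with no auxiliary parameter to tune---while the paper's recursive argument has the advantage of never writing out the full convolution and could be adapted more easily if $m_{t+1}$ depended on $m_t$ in a slightly more complicated way. Both proofs hinge on exactly the same structural ingredient, the monotonicity $r_{t+1,i}\le r_{t,i}$.
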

\begin{proof} Note that using $m_{0, i}=0$ we may rewrite $G_i$ as 
$$
G_i(T, \mu)=
\sum_{t=0}^{T}r_{t, i}m_{t, i}^2
$$
with $G_i(0, \mu)=0$. For any $\epsilon>0$ with $(1+\epsilon)\mu^2<1$, 
\begin{align*}
G_i(T, \mu) &= \sum_{t=0}^{T-1}r_{t+1, i} m^2_{t+1, i}=
\sum_{t=0}^{T-1}r_{t+1, i}(\mu m_{t, i}+ v_{t, i})^2\\
& \leq \sum_{t=0}^{T-1}((1+\epsilon) \mu^2 r_{t, i}m_{t, i}^2 + (1+\epsilon^{-1})r_{t+1, i}v_{t, i}^2)\qquad (\text{since} \quad r_{t+1,i}\leq r_{t,i}) \\
& \leq (1+\epsilon)\mu^2 G_i(T-1, \mu)+ (1+\epsilon^{-1})G_i(T, 0) \\
& \leq ((1+\epsilon)\mu^2)^{T} G_i(0, \mu) +(1+\epsilon^{-1})G_i(T, 0)(1+(1+\epsilon)\mu^2 +\cdots +((1+\epsilon)\mu^2)^{T-1})\\
& \leq 
(1+\epsilon^{-1})G_i(T, 0) \frac{1}{1-(1+\epsilon)\mu^2}.  
\end{align*}
Since $0\leq \mu<1$, we may take $\epsilon=\frac{1-\mu}{\mu}>0$ so that $(1+\epsilon)\mu^2=\mu<1$, and $1+\epsilon^{-1}=\frac{1}{1-\mu}$, hence (\ref{GTM}).
\end{proof}

\begin{lemma}\label{lemA2}
For $0\leq \mu<1$, 
\begin{equation}\label{gtm}
G_i(T, \mu) \leq \frac{r_{0, i}}{2\eta(1-\mu)^2}.
\end{equation}
\end{lemma}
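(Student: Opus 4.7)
The plan is to reduce the general-momentum bound to the $\mu = 0$ case via Lemma \ref{lemA1}, and then prove the $\mu = 0$ estimate directly using the energy update rule, which telescopes.

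First I would invoke Lemma \ref{lemA1} to get
\[
G_i(T,\mu) \le \frac{1}{(1-\mu)^2} G_i(T,0),
\]
so it suffices to establish $G_i(T,0) \le r_{0,i}/(2\eta)$. When $\mu = 0$, the momentum recursion $m_{t+1,i} = \mu m_{t,i} + v_{t,i}$ collapses to $m_{t,i} = v_{t-1,i}$, and therefore
\[
G_i(T,0) = \sum_{t=1}^{T} r_{t,i}\, m_{t,i}^2 = \sum_{t=1}^{T} r_{t,i}\, v_{t-1,i}^2 = \sum_{s=0}^{T-1} r_{s+1,i}\, v_{s,i}^2.
\]

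Next I would use the energy update $r_{s+1,i} = r_{s,i}/(1+2\eta v_{s,i}^2)$, which is equivalent to the identity
\[
r_{s,i} - r_{s+1,i} = 2\eta\, r_{s+1,i}\, v_{s,i}^2,
\]
so that $r_{s+1,i}\, v_{s,i}^2 = (r_{s,i} - r_{s+1,i})/(2\eta)$. Substituting into the previous sum produces a telescoping series:
\[
G_i(T,0) = \frac{1}{2\eta}\sum_{s=0}^{T-1} (r_{s,i} - r_{s+1,i}) = \frac{r_{0,i} - r_{T,i}}{2\eta} \le \frac{r_{0,i}}{2\eta},
\]
where the last inequality uses the monotonicity $r_{T,i} \ge 0$ established in Theorem \ref{prop1}. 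Combining with the reduction from Lemma \ref{lemA1} then yields the desired bound $G_i(T,\mu) \le r_{0,i}/(2\eta(1-\mu)^2)$.

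There is no real obstacle here: both ingredients (the momentum reduction and the telescoping identity coming from the energy update) are already at hand, and the only mild care needed is the index shift $t \mapsto s = t-1$ together with using $m_{0,i} = 0$ so that the sum starting from $t=1$ matches the sum over the energy differences starting from $s=0$.
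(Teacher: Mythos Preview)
Your proposal is correct and follows essentially the same route as the paper: both establish $G_i(T,0)=\sum_{s=0}^{T-1} r_{s+1,i}v_{s,i}^2 \le r_{0,i}/(2\eta)$ via the telescoping identity $r_{s,i}-r_{s+1,i}=2\eta r_{s+1,i}v_{s,i}^2$ from the energy update, and then apply Lemma~\ref{lemA1}. The only cosmetic difference is the order of presentation (the paper bounds $G_i(T,0)$ first and then invokes Lemma~\ref{lemA1}, while you invoke Lemma~\ref{lemA1} first).
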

\begin{proof}
From (\ref{aegdm}c), it follows 
\begin{equation*}
r_{t,i} - r_{t+1,i} 
= 2\eta r_{t+1,i}v_{t,i}^2.
\end{equation*}
Taking summation over $t$ from $0$ to $T-1$ and using telescopic cancellation, we have 
$$
r_{0,i} - r_{T,i} = 2\eta\sum_{t=0}^{T-1} r_{t+1,i}v^2_{t,i}.
$$
Rearrange the above to get
\begin{equation}\label{gt0}
G_i(T, 0)=\sum_{t=0}^{T-1} r_{t+1, i}v_{t, i}^2
\leq \frac{r_{0,i}}{2\eta}.
\end{equation}
By Lemma \ref{lemA1}, we get (\ref{gtm}). 
\end{proof}
For the proofs of Theorem \ref{prop1} and \ref{thm1}, we introduce notation, 
\begin{equation}\label{Ft}
F_t:=\sqrt{f_t(\theta_t)+c}.    
\end{equation}
The initial data for $r_i$ is taken as $r_{0, i}=F_0.$ With such choice 
\begin{equation}\label{GTmu}
 G(T, \mu)=\sum_{i=1}^nG_i(T, \mu)\leq \frac{nF_0}{2\eta(1-\mu)^2}.   
\end{equation}

For the proofs of Theorem \ref{prop2}, Theorem \ref{thm2c} and Theorem \ref{thm2r}, we introduce notation 
\begin{equation}\label{Ft2}
\tilde F_t:=\sqrt{f(\theta_t; \xi_t)+c}.    
\end{equation}
The initial data for $r_i$ is taken as $r_{0, i}=\tilde F_0.$ With such choice,  
\begin{equation}\label{GTmu2}
G(T,\mu)=\sum_{i=1}^{n}G_i(T,\mu)=\frac{n\tilde F_0}{2\eta(1-\mu)^2}.
\end{equation}

\begin{lemma}\label{lem3}
Under the assumptions  in Theorem \ref{thm2c}, we have for all $t\in [T]$, 
\begin{enumerate}[label=(\roman*)]
\item $\|\nabla f(\theta_t)\|_\infty\leq G_\infty$.
\item $\E[(\tilde F_t)^2]= F^2(\theta_t)=f(\theta_t)+c$.
\item 
$\E[\tilde F_t]\leq F(\theta_t)$. In particular, $\E[r_{0,i}]= \E[\tilde F_0]\leq F(\theta_0)$.
\item $\E[|F(\theta_t)-\tilde F_t|]\leq \frac{1}{2a^{1/2}}\sigma_f$.
\item $\E[\|\nabla F(\theta_t)-v_t\|^2_2]\leq \frac{G^2_\infty}{8a^3}\sigma^2_f+\frac{1}{2a}\sigma^2_g.$
\end{enumerate}
\end{lemma}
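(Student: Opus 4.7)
The plan is to handle parts (i)--(iii) as immediate consequences of the unbiasedness assumption and Jensen's inequality, then to establish (iv) and (v) by exploiting the identity $\sqrt{x}-\sqrt{y}=(x-y)/(\sqrt{x}+\sqrt{y})$ together with the lower bound $a$ on $f(\theta_t;\xi_t)+c$.

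For (i), since $\nabla f(\theta_t)=\E_{\xi_t}[\nabla f(\theta_t;\xi_t)]$ by unbiasedness, for each coordinate $i$ Jensen's inequality yields $|\partial_i f(\theta_t)|\leq \E[|\partial_i f(\theta_t;\xi_t)|]\leq \E[\|\nabla f(\theta_t;\xi_t)\|]\leq G_\infty$. For (ii), we simply use $(\tilde F_t)^2=f(\theta_t;\xi_t)+c$ and take expectations, invoking unbiasedness of $f(\theta_t;\xi_t)$. For (iii), the concavity of $\sqrt{\cdot}$ combined with (ii) gives $\E[\tilde F_t]\leq\sqrt{\E[(\tilde F_t)^2]}=F(\theta_t)$; this also pins down the initial condition $\E[r_{0,i}]\leq F(\theta_0)$. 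Note that $F(\theta_t)\geq\sqrt{a}$ follows from $f(\theta_t)+c=\E[f(\theta_t;\xi_t)+c]\geq a$, which will be used below.

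For (iv), I would write
\[
F(\theta_t)-\tilde F_t=\frac{f(\theta_t)-f(\theta_t;\xi_t)}{F(\theta_t)+\tilde F_t},
\]
use $F(\theta_t)+\tilde F_t\geq 2\sqrt{a}$ in the denominator to bound $|F(\theta_t)-\tilde F_t|\leq |f(\theta_t)-f(\theta_t;\xi_t)|/(2\sqrt{a})$, and then apply Jensen's inequality with the bounded variance assumption: $\E[|f(\theta_t)-f(\theta_t;\xi_t)|]\leq (\E[|f(\theta_t)-f(\theta_t;\xi_t)|^2])^{1/2}\leq\sigma_f$.

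The main obstacle is (v), since $\nabla F$ is a nonlinear function of both $\nabla f$ and $f$. The plan is to split the error into a ``function-value'' part and a ``gradient'' part via
\[
\nabla F(\theta_t)-v_t=\frac{\nabla f(\theta_t)(\tilde F_t-F(\theta_t))}{2F(\theta_t)\tilde F_t}+\frac{\nabla f(\theta_t)-\nabla f(\theta_t;\xi_t)}{2\tilde F_t}.
\]
Applying $\|x+y\|^2\leq 2\|x\|^2+2\|y\|^2$, the lower bounds $F(\theta_t)^2,\tilde F_t^2\geq a$, and the identity $(\tilde F_t-F(\theta_t))^2=(f(\theta_t;\xi_t)-f(\theta_t))^2/(\tilde F_t+F(\theta_t))^2\leq (f(\theta_t;\xi_t)-f(\theta_t))^2/(4a)$, one obtains
\[
\|\nabla F(\theta_t)-v_t\|^2\leq\frac{\|\nabla f(\theta_t)\|^2(f(\theta_t;\xi_t)-f(\theta_t))^2}{8a^2\,F(\theta_t)^2\tilde F_t^2}+\frac{\|\nabla f(\theta_t)-\nabla f(\theta_t;\xi_t)\|^2}{2\tilde F_t^2}.
\]
Taking expectation, using $F(\theta_t)^2\geq a$, $\tilde F_t^2\geq a$, $\|\nabla f(\theta_t)\|^2\leq G_\infty^2$ from (i), and the bounded variance assumptions on $f$ and $\nabla f$, yields the claimed estimate $\tfrac{G_\infty^2\sigma_f^2}{8a^3}+\tfrac{\sigma_g^2}{2a}$. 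The delicate point is the careful matching of the $a^3$ denominator in the first term, which requires using the $a$ lower bound three times (once from the $\sqrt{a}+\sqrt{a}$ denominator appearing in $(\tilde F_t-F(\theta_t))^2$, and twice from $F(\theta_t)^2\tilde F_t^2\geq a^2$).
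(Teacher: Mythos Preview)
Your proposal is correct and follows essentially the same route as the paper's proof: parts (i)--(iii) via unbiasedness and Jensen, part (iv) via the difference-of-square-roots identity with the lower bound $a$, and part (v) via the same two-term decomposition of $\nabla F(\theta_t)-v_t$ followed by $\|x+y\|^2\le 2\|x\|^2+2\|y\|^2$ and the $a$ lower bounds. The only slip is a typo in your displayed inequality for (v), where the first denominator should read $8a\,F(\theta_t)^2\tilde F_t^2$ rather than $8a^2\,F(\theta_t)^2\tilde F_t^2$; your verbal accounting of the three factors of $a$ is correct and the final bound is right.
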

\begin{proof}
(i) By assumption $\|g_t\|_\infty\leq G_\infty$, we have
$$\|\nabla f(\theta_t)\|_\infty=\|\E[g_t]\|_\infty\leq\E[\|g_t\|_\infty]\leq G_\infty.$$
(ii) This follows from the unbiased sampling of 
$$
f(\theta_t)=\E_{\xi_t}[ f(\theta_t; \xi_t)].
$$
(iii) By Jensen's inequality, we have
$$\E[\tilde F_t] 
\leq\sqrt{\E[\tilde F_t^2]}=\sqrt{F(\theta_t)^2}=F(\theta_t).$$
(iv) By the assumption $0<a\leq   f(\theta_t;\xi_t)+c=\tilde F_t^2$, we have
\begin{align*}
&\quad \E[|F(\theta_t)-\tilde F_t|]
\leq \E\Bigg[\bigg|\frac{f(\theta_t)-f(\theta_t;\xi_t)}{F(\theta_t)+\tilde F_t}\bigg|\Bigg] \leq \frac{1}{2a^{1/2}}\E[|f(\theta_t)-f(\theta_t;\xi_t)|] 
\leq \frac{1}{2a^{1/2}}\sigma_f.
\end{align*}
(v) By the definition of $F(\theta)$ and $v_t$ in (\ref{aegdm}a), we have
\begin{align*}
\|\nabla F(\theta_t)-v_t\|^2_2
&= \bigg\|\frac{\nabla f(\theta_t)}{2F(\theta_t)}-\frac{g_t}{2\tilde F_t}\bigg\|^2_2\\
&= \frac{1}{4}\bigg\|\frac{\nabla f(\theta_t)(\tilde F_t-F(\theta_t)) }{F(\theta_t)\tilde F_t}+\frac{\nabla f(\theta_t)-g_t}{\tilde F_t}\bigg\|^2_2\\
&\leq \frac{1}{2} \bigg\|\frac{\nabla f(\theta_t)(\tilde F_t-F(\theta_t)) }{F(\theta_t)\tilde F_t}\bigg\|^2_2 + \frac{1}{2}\bigg\|\frac{\nabla f(\theta_t)-g_t}{\tilde F_t}\bigg\|^2_2\\
&\leq \frac{G^2_\infty}{2a^{2}}|\tilde F_t-F(\theta_t)|^2+\frac{1}{2a}\|\nabla f(\theta_t)-g_t\|^2_2, 
\end{align*}
where 
both the gradient bound and the assumption that $0<a\leq f(\theta_t;\xi_t)+c=\tilde F^2_t$ are essentially used.  Take an expectation to get 
\begin{align*}
\E[\|\nabla F(\theta_t)-v_t\|^2_2]\leq \frac{G^2_\infty}{2a^{2}}\E[|\tilde F_t-F(\theta_t)|^2]+\frac{1}{2a}\E[\|\nabla f(\theta_t)-g_t\|^2_2].
\end{align*}
Similar to the proof for ($iv$), we have
$$
\E[|\tilde F_t-F(\theta_t)|^2]\leq \frac{1}{4a}\sigma^2_f.
$$
This together with the variance assumption  for $g_t$ gives
\begin{equation*}
\E[\|\nabla F(\theta_t)-v_t\|^2_2]\leq \frac{G^2_\infty}{8a^3}\sigma^2_f+\frac{1}{2a}\sigma^2_g.
\end{equation*}
\end{proof}

\section{Proof of Theorem \ref{prop1}}\label{pfp1}
The decreasing of $r_{t,i}$ can be easily seen from (\ref{aegdm}c) since $r_{0,i}>0$ and $1+2\eta v^2_{t,i}\geq1$ for all $t\geq1$ and $i\in[n]$.
Using (\ref{gtm}), we have
\begin{align}\notag
\quad\sum_{t=0}^{T-1}\|\theta_{t+1}-\theta_t\|^2_2
& =\sum_{i=1}^n\sum_{t=0}^{T-1} (-2\eta r_{t+1, i}m_{t+1, i})^2\\\notag
&\leq 4\eta^2 \sum_{i=1}^n\sum_{t=0}^{T-1} r_{0, i} r_{t+1, i}m^2_{t+1, i}\\\notag
& = 4\eta^2 G(T, \mu)F_0 
\leq \frac{2 \eta n}{(1-\mu)^2}(f_0(\theta_0)+c).
\end{align}
Using the Cauchy-Schwarz inequality,  we get
\begin{align*}
&\quad\sum_{t=0}^{T-1}|v_{t,i}|=\sum_{t=0}^{T-1}\frac{1}{\sqrt{r_{t+1,i}}}\sqrt{r_{t+1,i}}|v_{t,i}|\\
&\leq\Bigg(\sum_{t=0}^{T-1}\frac{1}{r_{t+1,i}}\Bigg)^{1/2}\Bigg(\sum_{t=0}^{T-1}r_{t+1,i}v_{t, i}^2\Bigg)^{1/2}\\
& \leq \Bigg(G_i(T, 0)\Bigg)^{1/2}\Bigg(\frac{T}{r_{T,i}}\Bigg)^{1/2}.    
\end{align*}
The desired estimate \eqref{T} follows by using (\ref{gt0}).

\section{Proof of Theorem \ref{thm1}}\label{pf1}
By convexity of $f_t$, the regret can be bounded by
\begin{align}\label{rt}
R(T) &= \sum_{t=0}^{T-1}f_t(\theta_t)-f_t(\theta^*)
\leq \sum_{t=0}^{T-1} \nabla f_t(\theta_t)^\top (\theta_t-\theta^*)=\sum_{t=0}^{T-1}\sum_{i=1}^{n}
\partial_i f_{t}(\theta_t) (\theta_{t,i}-\theta^*_i).
\end{align}
Using the update rule \eqref{aegdm}, we have for $i\in[n]$,
$$
\theta_{t+1,i} = \theta_{t,i} - 2\eta r_{t+1,i}m_{t+1,i}=\theta_{t,i} - 2\eta r_{t+1,i}(\mu m_{t,i}+v_{t,i}), 
$$
which upon subtraction of $\theta^*_i$ and squaring both sides yields 
$$
(\theta_{t+1,i}-\theta^*_i)^2 = (\theta_{t,i}-\theta^*_i)^2 - 4\eta r_{t+1,i}(\mu m_{t,i}+v_{t,i})(\theta_{t,i}-\theta^*_i) + 4\eta^2 r^2_{t+1,i}m^2_{t+1,i}.
$$
Rearranging we get
\begin{align*}
4\eta r_{t+1,i}v_{t, i}(\theta_{t,i}-\theta^*_i) = &\Big((\theta_{t,i}-\theta^*_i)^2 - (\theta_{t+1,i}-\theta^*_i)^2\Big) - 4\eta r_{t+1,i}\mu m_{t,i}(\theta_{t,i}-\theta^*_i) \\
&+ 4\eta^2 r^2_{t+1,i}m^2_{t+1,i}.
\end{align*}
Note that $4\eta r_{t+1,i}v_{t, i}=2\eta r_{t+1,i}\partial_i f_{t}/ F_t$, hence the above can be rewritten as  
\begin{align*}
&\quad\partial_i f_{t}(\theta_t)(\theta_{t,i}-\theta^*_i) \\
&= \frac{F_t}{2\eta r_{t+1,i}}\Big((\theta_{t,i}-\theta^*_i)^2 - (\theta_{t+1,i}-\theta^*_i)^2\Big) + 2\mu F_t  m_{t,i}(\theta^*_i-\theta_{t,i}) + 2 \eta F_t r_{t+1,i}m^2_{t+1,i}\\
&= \frac{F_t}{2\eta r_{t+1,i}}(\theta_{t,i}-\theta_{t+1,i})\Big((\theta_{t,i}-\theta^*_i)+(\theta_{t+1,i}-\theta^*_i)\Big) + 2\mu F_t  m_{t,i}(\theta^*_i-\theta_{t,i}) + 2 \eta F_t r_{t+1,i}m^2_{t+1,i}\\
&= F_t m_{t+1,i}\Big((\theta_{t,i}-\theta^*_i)+(\theta_{t+1,i}-\theta^*_i)\Big) + 2\mu F_t  m_{t,i}(\theta^*_i-\theta_{t,i}) + 2 \eta F_t r_{t+1,i}m^2_{t+1,i}.
\end{align*}
Using (\ref{rt}), we have
\begin{equation}\label{regret1} 
\begin{aligned}
R(T) &\leq \sum_{t=0}^{T-1}\sum_{i=1}^{n}F_t m_{t+1,i}\Big((\theta_{t,i}-\theta^*_i)+(\theta_{t+1,i}-\theta^*_i)\Big) \\
&\quad +\sum_{t=0}^{T-1}\sum_{i=1}^{n} 2\mu F_t  m_{t,i}(\theta^*_i-\theta_{t,i})
+\sum_{t=0}^{T-1}\sum_{i=1}^{n}2\eta F_tr_{t+1,i}m^2_{t+1,i}.
\end{aligned}  
\end{equation}
Introduce $M(T):=\sum_{t=0}^{T-1}\sum_{i=1}^{n}  |m_{t+1,i}|$ and using the bound $F_t\leq B^{1/2}$, we can estimate $R(T)$ as follows: 
\begin{align*}
  R(T) & \leq 2B^{1/2}D_\infty (M(T)+\mu M(T-1))+2\eta B^{1/2}G(T, \mu) \\
  & \leq 2(1+\mu) B^{1/2}D_\infty M(T)+2\eta B^{1/2}G(T, \mu). 
\end{align*}
Note that $G(T, \mu)$ is bounded by \eqref{GTmu} and $M(T)$ can be bounded by
\begin{align*}
M(T)
& = \sum_{t=0}^{T-1}\sum_{i=1}^{n}  |m_{t+1,i}|\\
&\leq \left(\sum_{t=0}^{T-1}\sum_{i=1}^{n} 
r_{t+1, i}|m_{t+1, i}|^2 \right)^{1/2}\left(\sum_{t=0}^{T-1}\sum_{i=1}^{n} \frac{1}{r_{t+1, i}}\right)^{1/2} \\
& \leq  \sqrt{G(T, \mu)} \left(\sum_{i=1}^{n} \frac{T}{r_{T, i}}\right)^{1/2}.
\end{align*}

Connecting all the above estimates, we obtain
$$
R(T) \leq  C_1 \Bigg(\sum_{i=1}^n \frac{1}{r_{T, i}}\Bigg)^{1/2}\sqrt{T}+ C_2,
$$
where 
$$
C_1= 2 (1+\mu) B^{1/2} D_\infty\sqrt{G(T, \mu)}, \quad C_2=2\eta B^{1/2} G(T, \mu).  
$$
The regret bound \eqref{regret1} follows by using \eqref{GTmu}.
This completes the proof of the regret bound.

\section{Proof of Theorem \ref{prop2}}\label{pf2c-}
The proof is entirely similar to the proof of Theorem \ref{prop1}, with the use of expectation.

\section{Proof of Theorem \ref{thm2c}}\label{pf2c}

Since $f$ is $L$-smooth, we have
\begin{align}\label{fL}
f(\theta_{t+1})\leq f(\theta_t)+\nabla f(\theta_t)^\top(\theta_{t+1}-\theta_t)+\frac{L}{2}\|\theta_{t+1}-\theta_t\|^2_2.
\end{align}
Denoting $\eta_t=\eta/\tilde F_t$, we rewrite the second term in the RHS of (\ref{fL}) as
\begin{align}\notag
&\quad\nabla f(\theta_t)^\top(\theta_{t+1}-\theta_t)\\\notag
&= \nabla f(\theta_t)^\top(-2\eta r_{t+1} m_{t+1})\\\notag
&= -2\eta\nabla f(\theta_t)^\top  r_{t+1}(\mu m_{t} + v_{t})\\\notag
&= -2\eta\nabla f(\theta_t)^\top r_{t+1} v_{t} -2\mu\eta\nabla f(\theta_t)^\top r_{t+1}m_{t}\\\notag
&= -\nabla f(\theta_t)^\top \eta_tr_{t+1} g_{t} -2\mu\eta\nabla f(\theta_t)^\top r_{t+1}m_{t}\qquad(\text{since}\quad g_t=2\tilde F_t v_t)\\\label{T2}
&= -\nabla f(\theta_t)^\top \eta_{t-1}r_{t} g_{t} + \nabla f(\theta_t)^\top (\eta_{t-1}r_{t}-\eta_tr_{t+1}) g_{t} -2\mu\eta\nabla f(\theta_t)^\top r_{t+1}m_{t}.
\end{align}
We further bound the second term and third term in the RHS of (\ref{T2}) separately. For the second term, we have
\begin{align}\notag
&\quad\nabla f(\theta_t)^\top (\eta_{t-1}r_{t}-\eta_tr_{t+1}) g_{t}\\\notag
&=\nabla f(\theta_t)^\top \eta_{t-1}(r_{t}-r_{t+1}) g_{t}+ \nabla f(\theta_t)^\top (\eta_{t-1}-\eta_t)r_{t+1}g_{t}\\\notag
&=\nabla f(\theta_t)^\top \eta_{t-1}(r_{t}-r_{t+1}) g_{t}+(\eta_{t-1}-\eta_t)g_t^\top r_{t+1}g_{t}\\\notag
&\quad+(\eta_{t-1}-\eta_t)(\nabla f(\theta_t)-g_t)^\top r_{t+1}g_{t}\\\notag
&\leq \eta_{t-1}\|\nabla f(\theta_t)\|_\infty \|r_{t}-r_{t+1}\|_{1,1} \|g_{t}\|_\infty + |\eta_{t-1}-\eta_t|g_t^\top r_{t+1}g_{t}\\\notag
&\quad+|\eta_{t-1}-\eta_t|\cdot |(\nabla f(\theta_t)-g_t)^\top r_{t+1}g_{t}|\\\notag
&\leq (\eta G^2_\infty/\sqrt{a})(\|r_t\|_{1,1}-\|r_{t+1}\|_{1,1})+(2\eta/\sqrt{a})g_t^\top r_{t+1}g_{t}\\\label{T21}
&\quad+(2\eta/\sqrt{a})|(\nabla f(\theta_t)-g_t)^\top r_{t+1}g_{t}|.
\end{align}
The fourth inequality holds because for a positive diagonal matrix $A$, $x^\top Ay\leq\|x\|_\infty\|A\|_{1,1}\|y\|_\infty$, where $\|A\|_{1,1}=\sum_{i}a_{ii}$. The last inequality follows from  $r_{t+1,i}\leq r_{t,i}$ for $i\in[n]$ and (i) in Lemma \ref{lem3}.

For the third term in the RHS of (\ref{T2}), we have
\begin{align}\notag
-2\mu\eta\nabla f(\theta_t)^\top r_{t+1}m_{t}
&= -2\mu\eta g_{t}^\top r_{t+1}m_{t} + 2\mu\eta(g_t-\nabla f(\theta_t))^\top r_{t+1}m_{t}\\\label{T22}
&\leq \mu\eta g_t^\top r_{t+1}g_t + \mu\eta m_{t}^\top r_{t+1}m_{t} + 2\mu\eta|(g_t-\nabla f(\theta_t))^\top r_{t+1}m_{t}|,
\end{align}
where the inequality follows from that for a positive diagonal matrix $A$, $x^\top Ay\leq \frac{1}{2}x^\top Ax+\frac{1}{2}y^\top Ay$. 
Connecting (\ref{T21}) and (\ref{T22}), we can further bound (\ref{T2}) by
\begin{equation}\label{T2s}
\begin{aligned}
\nabla f(\theta_t)^\top(\theta_{t+1}-\theta_t)
&\leq -\nabla f(\theta_t)^\top \eta_{t-1}r_{t} g_{t} + (\eta G^{2}_\infty/\sqrt{a})(\|r_{t}\|_{1,1}-\|r_{t+1}\|_{1,1}) \\
&\quad+ (2\eta/\sqrt{a}+\mu\eta) g_{t}^\top r_{t+1}g_{t} + \mu\eta m_{t}^\top r_{t+1}m_{t}\\ 
&\quad+ (2\eta/\sqrt{a})|(\nabla f(\theta_t)-g_t)^\top r_{t+1}g_{t}|+2\mu\eta|(g_t-\nabla f(\theta_t))^\top r_{t+1}m_{t}|.
\end{aligned}
\end{equation}
Substituting (\ref{T2s}) into \eqref{fL} and rearranging to get
\begin{align*}
\nabla f(\theta_t)^\top \eta_{t-1}r_{t} g_{t}
&\leq (f(\theta_t)-f(\theta_{t+1})) + (\eta G^{2}_\infty/\sqrt{a})(\|r_{t}\|_{1,1}-\|r_{t+1}\|_{1,1}) \\
&\quad+ (2\eta/\sqrt{a}+\mu\eta) g_{t}^\top r_{t+1}g_{t} + \mu\eta m_{t}^\top r_{t+1}m_{t}\\ 
&\quad+ (2\eta/\sqrt{a})|(\nabla f(\theta_t)-g_t)^\top r_{t+1}g_{t}|+2\mu\eta|(g_t-\nabla f(\theta_t))^\top r_{t+1}m_{t}|\\
&\quad+ \frac{L}{2}\|\theta_{t+1}-\theta_t\|^2_2.
\end{align*}
Taking an conditional expectation on $(\theta_t,r_t)$, we have
\begin{equation}\label{et}
\begin{aligned}
&\quad \nabla f(\theta_t)^\top \eta_{t-1}r_{t}\nabla f(\theta_t)
=\E_{\xi_t}\bigg[\nabla f(\theta_t)^\top \eta_{t-1}r_{t}g_t\bigg]\\
&\leq \E_{\xi_t}\Bigg[(f(\theta_t)-f(\theta_{t+1})) + (\eta G^{2}_\infty/\sqrt{a})(\|r_{t}\|_{1,1}-\|r_{t+1}\|_{1,1})\\
&\quad\quad+(2\eta/\sqrt{a}+\mu\eta) g_{t}^\top r_{t+1}g_{t} + \mu\eta m_{t}^\top r_{t+1}m_{t}\\ 
&\quad\quad+(2\eta/\sqrt{a})|(\nabla f(\theta_t)-g_t)^\top r_{t+1}g_{t}|+2\mu\eta|(g_t-\nabla f(\theta_t))^\top r_{t+1}m_{t}|\\
&\quad\quad+\frac{L}{2}\|\theta_{t+1}-\theta_t\|^2_2\Bigg],
\end{aligned}
\end{equation}
where the assumption $\E_{\xi_t}[g_t]=\nabla f(\theta_t)$ is used in the first equality. Since $\xi_1,...,\xi_t$ are independent random variables, we set $\E=\E_{\xi_1}\E_{\xi_2}...\E_{\xi_T}$ and take a summation on (\ref{et}) over $t$ from 0 to $T-1$ to get
\begin{equation}\label{Esf}
\begin{aligned}
&\quad\E\Bigg[\sum_{t=0}^{T-1}\nabla f(\theta_t)^\top \eta_{t-1}r_{t}\nabla f(\theta_t)\Bigg]\\
&\leq \E\Big[f(\theta_0)-f(\theta_{T})\Big] +  (\eta G^{2}_\infty/\sqrt{a})\E\Big[\|r_{0}\|_{1,1}-\|r_{T}\|_{1,1}\Big]\\
&\quad+ (2\eta/\sqrt{a}+\mu\eta)\E\Bigg[\sum_{t=0}^{T-1}g_{t}^\top r_{t+1}g_{t}\Bigg] + \mu\eta\E\Bigg[\sum_{t=0}^{T-1}m_{t}^\top r_{t+1}m_{t}\Bigg]\\
&\quad+(2\eta/\sqrt{a})\E\Bigg[\sum_{t=0}^{T-1}|(\nabla f(\theta_t)-g_t)^\top r_{t+1}g_{t}|\Bigg]
+2\mu\eta\E\Bigg[\sum_{t=0}^{T-1}|(g_t-\nabla f(\theta_t))^\top r_{t+1}m_{t}|\Bigg]\\
&\quad+\frac{L}{2}\E\Bigg[\sum_{t=0}^{T-1}\|\theta_{t+1}-\theta_{t}\|^2_2\Bigg].
\end{aligned}
\end{equation}
Below we bound each term in (\ref{Esf}) separately. First recall $g_t=2v_t\sqrt{f(\theta_t;\xi_t)+c}$, and $f(\theta_t;\xi_t)+c\leq B$, we have
\begin{align}\notag
\sum_{t=0}^{T-1}g_t^\top r_{t+1}g_t 
\leq 4B \sum_{t=0}^{T-1}v_t^\top r_{t+1}v_t = 4BG(T,0)
\leq 2nB\tilde F_0/\eta,
\end{align}
where \eqref{GTmu2} with $\mu=0$ was used. Note that by $r_{t+1,i}\leq r_{t,i}$, we have
\begin{equation*}
\sum_{t=0}^{T-1}m_{t}^\top r_{t+1}m_{t} 
\leq \sum_{t=0}^{T-1}m_{t}^\top r_{t}m_{t} 
= \sum_{t=0}^{T-2}m_{t+1}^\top r_{t+1}m_{t+1} \leq  G(T,\mu)\leq \frac{ n\tilde F_0}{2\eta(1-\mu)^2},    
\end{equation*}
where \eqref{GTmu2} was used.  These two bounds allow us to further get
\begin{align}\notag
\sum_{t=0}^{T-1}\|r_{t+1}g_t\|^2_2
&=\sum_{i=1}^n\sum_{t=0}^{T-1}  r^2_{t+1, i}g_{t, i}^2  \leq \sum_{i=1}^n\sum_{t=0}^{T-1} r_{0,i} r_{t+1, i}g_{t, i}^2 \\
&=\bigg(\sum_{i=1}^n\sum_{t=0}^{T-1} r_{t, i}g_{t, i}^2\bigg)\tilde F_0\label{rg^2}
\leq 2nB\tilde F^2_0/\eta,
\end{align}
and also 
\begin{align}\label{rm^2}
\sum_{t=0}^{T-1}\|r_{t+1}m_t\|^2_2
\leq \bigg(\sum_{i=1}^n\sum_{t=0}^{T-1} r_{t, i}m_{t, i}^2\bigg)\tilde F_0
&\leq \frac{ n\tilde F^2_0}{2\eta(1-\mu)^2}.
\end{align}
For the rest three terms in (\ref{Esf}), we use the Cauchy-Schwarz inequality to get 
\begin{align}\notag
\E\Bigg[\sum_{t=0}^{T-1}|(g_t-\nabla f(\theta_t))^\top r_{t+1}m_{t}|\Bigg]
&\leq \E\Bigg[\sum_{t=0}^{T-1}\|\nabla f(\theta_t)-g_t\|_2 \|r_{t+1}m_{t}\|_2\Bigg]\\\notag
&\leq \E\Bigg[\bigg(\sum_{t=0}^{T-1}\|\nabla f(\theta_t)-g_t\|^2_2\bigg)^{1/2}\bigg(\sum_{t=0}^{T-1}\|r_{t+1}m_{t}\|^2_2\bigg)^{1/2}\Bigg]\\\notag
&\leq \Bigg(\E\Bigg[\sum_{t=0}^{T-1}\|\nabla f(\theta_t)-g_t\|^2_2\Bigg]\Bigg)^{1/2}\Bigg(\E\Bigg[\sum_{t=0}^{T-1}\| r_{t+1}m_{t}\|^2_2\Bigg]\Bigg)^{1/2}\\\label{vm}
&\leq \sigma_g\sqrt{nT/2\eta}F(\theta_0) /(1-\mu),
\end{align}
where (\ref{rm^2}) and the bounded variance assumption were used.  We replace $m_t$ in (\ref{vm}) by $g_t$ and use (\ref{rg^2}) to get
\begin{align}\notag 
\E\Bigg[\sum_{t=1}^{T}|(\nabla f(\theta_t)-g_t)^\top r_{t+1}g_{t}|\Bigg] 
&\leq \Bigg(\E\Bigg[\sum_{t=0}^{T-1}\|\nabla f(\theta_t)-g_t\|^2_2\Bigg]\Bigg)^{1/2}\Bigg(\E\Bigg[\sum_{t=0}^{T-1}\| r_{t+1}g_{t}\|^2_2\Bigg]\Bigg)^{1/2}\\ \label{vg}
& \leq \sigma_g\sqrt{2BnT/\eta}F(\theta_0).
\end{align}
By (\ref{rev1+}), the last term in (\ref{Esf}) is bounded above by 
\begin{equation}\label{dtheta^2}
\frac{L}{2}\E\left[\sum_{t=0}^\infty\|\theta_{t+1}-\theta_t\|^2 \right] \leq  \frac{L\eta n}{(1-\mu)^2}F^2(\theta_0).
\end{equation}

Substituting (\ref{vg}), (\ref{vm}), (\ref{dtheta^2}) into (\ref{Esf}),  using  Lemma \ref{lem3},  $\E[\|r_0\|_{1, 1}]\leq nF(\theta_0)$, 
we get
\begin{equation}\label{rgb}
\begin{aligned}
\E\Bigg[\sum_{t=0}^{T-1}&\nabla f(\theta_t)^\top \eta_{t-1}r_{t}\nabla f(\theta_t)\Bigg]
\leq (f(\theta_0)-f(\theta^*))+\eta n(G^2_\infty/\sqrt{a})F(\theta_0)\\
&+ \Big(4B/\sqrt{a}+2\mu B+\mu/(2(1-\mu)^2)\Big)n F(\theta_0)\\
& + (2\sqrt{2B/a}+\sqrt{2}\mu/(1-\mu))\sigma_g\sqrt{\eta n T}F(\theta_0) +L\eta nF^2(\theta_0)/(1-\mu)^2.
\end{aligned}    
\end{equation}
Note that the left hand side is bounded from below by 
$$
\eta B^{-1/2} \E\Bigg[  \min_ir_{T,i} \sum_{t=0}^{T-1}\|\nabla f(\theta_t)\|^2_2\Bigg],
$$ 
where we used $\eta_t\geq \eta/B^{1/2}$. 
Thus we have 
\begin{align*}
\E\Bigg[  \min_ir_{T,i} \sum_{t=0}^{T-1}\|\nabla f(\theta_t)\|^2_2\Bigg]
\leq \frac{C_1+C_2n+C_3\sigma_g \sqrt{ nT}}{\eta },
\end{align*}
where
\begin{align*}
C_1 &= (f(\theta_0)-f(\theta^*))B^{1/2},\\
C_2 &=  \Big(\eta G^2_\infty /\sqrt{a} + 4B/\sqrt{a}+2\mu B+\mu/(2(1-\mu)^2)\Big) B^{1/2}\sqrt{f(\theta_0)+c}\\
&\quad+\eta LB^{1/2}(f(\theta_0)+c)/(1-\mu)^2,\\
C_3 &= \big(2\sqrt{B/a}+\mu/(1-\mu)\big)\sqrt{2\eta B}\sqrt{f(\theta_0)+c}.
\end{align*}

\section{Proof of Theorem \ref{thm2r}}\label{pf2r}
Recall that $F(\theta)=\sqrt{f(\theta)+c}$, then for any $x, y\in \{\theta_t\}_{t=0}^T$ we have 
\begin{align*}
\|\nabla F(x)-\nabla F(y)\|
&= \bigg\|\frac{\nabla f(x)}{2F(x)}-\frac{\nabla f(y)}{2F(y)}\bigg\|\\
&= \frac{1}{2}\bigg\|\frac{\nabla f(x)(F(y)-F(x))}{F(x)F(y)} + \frac{\nabla f(x)-\nabla f(y)}{F(y)}\bigg\|\\
&\leq \frac{G_\infty}{2(F(\theta^*))^2}|F(y)-F(x)| + \frac{1}{2F(\theta^*)}\|\nabla f(x)-\nabla f(y)\|.
\end{align*}
One may check that 
$$
|F(y)-F(x)|\leq \frac{G_\infty}{2F(\theta^*)}\|x-y\|.
$$
These together with the $L$-smoothness of $f$ lead to
\begin{equation*}
\|\nabla F(x)-\nabla F(y)\|_2 \leq 
L_F \|x-y\|,
\end{equation*}
where 
$$
L_F=
\frac{1}{2\sqrt{f(\theta^*)+c}} \left( L+ \frac{G^2_\infty}{2(f(\theta^*)+c)}\right). 
$$
This confirms the $L_F$-smoothness of $F$, which yields 
\begin{align*}
 F(\theta_{t+1}) -  F(\theta_t)
& \leq\nabla F(\theta_t)^\top (\theta_{t+1}-\theta_t) +\frac{L_F}{2}\|\theta_{t+1}-\theta_t\|^2 \\
& = (\nabla F(\theta_t)-v_t)^\top (\theta_{t+1}-\theta_t) +v_{t}^\top (\theta_{t+1}-\theta_t) +\frac{L_F}{2}\|\theta_{t+1}-\theta_t\|^2. \\
\end{align*}
Summation of the above over $t$ from $0$ to $T-1$ and taken with the expectation gives 
\begin{equation}\label{ET1}
\E[F(\theta_{T})-F(\theta_{0})]\leq \sum_{i=1}^{3} S_i,    
\end{equation}
where 
\begin{align*}
&S_1=\E\Bigg[\sum_{t=0}^{T-1}v_{t}^\top (\theta_{t+1}-\theta_t)\Bigg],\\
&S_2=\E\Bigg[\sum_{t=0}^{T-1}(\nabla F(\theta_t)-v_t)^\top (\theta_{t+1}-\theta_t)\Bigg],\\
&S_3=\E\Bigg[\sum_{t=0}^{T-1}\frac{L_F}{2}\|\theta_{t+1}-\theta_t\|^2\Bigg].
\end{align*}
Below we bound $S_1, S_2, S_3$ separately. To bound $S_1$, we first note that
\begin{align*}
r_{t+1,i}-r_{t,i}&=-2\eta r_{t+1,i}v^2_{t,i} = v_{t,i}(-2\eta r_{t+1,i}v_{t,i})  \\
&= v_{t,i}\big(-2\eta r_{t+1,i}(m_{t+1,i}-\mu m_{t,i})\big)\\
&= v_{t,i}\big(-2\eta r_{t+1,i}m_{t+1,i}+2\mu\eta r_{t+1,i} m_{t,i}\big)\\
&= v_{t,i}(\theta_{t+1,i}-\theta_i)+2\mu\eta r_{t+1,i} v_{t,i}m_{t,i},
\end{align*}
from which we get
\begin{align*}
S_1
&= \E\Bigg[\sum_{t=0}^{T-1}v_{t}^\top (\theta_{t+1}-\theta_t)\Bigg]\\
&=  \E\Bigg[\sum_{i=1}^{n}\sum_{t=0}^{T-1} r_{t+1,i}-r_{t,i}-2\mu\eta r_{t+1,i} v_{t,i}m_{t,i}\Bigg]\\ 
&=  \sum_{i=1}^{n}\E[r_{T,i}]-n\E[\tilde F_0]-2\mu\eta\E\Bigg[\sum_{i=1}^{n}\sum_{t=0}^{T-1} r_{t+1,i} v_{t,i}m_{t,i}\Bigg],
\end{align*}
where the third equality follows from (\ref{aegdm}c).

For $S_2$, by Cauchy-Schwarz inequality, 
we have
\begin{align*}
S_2
&= \E\Bigg[\sum_{t=0}^{T-1}(\nabla F(\theta_t)-v_t)^\top (\theta_{t+1}-\theta_t)\Bigg]\\
&\leq \E\Bigg[\sum_{t=0}^{T-1}\|\nabla F(\theta_t)-v_t)\|_2 \|\theta_{t+1}-\theta_t)\|_2\Bigg]\\
&\leq \E\Bigg[\bigg(\sum_{t=0}^{T-1}\|\nabla F(\theta_t)-v_t)\|^2_2\bigg)^{1/2}\bigg(\sum_{t=0}^{T-1}\|\theta_{t+1}-\theta_t\|^2_2\bigg)^{1/2}\Bigg]\\
&\leq \Bigg(\E\Bigg[\sum_{t=0}^{T-1}\|\nabla F(\theta_t)-v_t)\|^2_2\Bigg]\Bigg)^{1/2}\Bigg(\E\Bigg[\sum_{t=0}^{T-1}\|\theta_{t+1}-\theta_t\|^2_2\Bigg]\Bigg)^{1/2}\\
&\leq \frac{\sqrt{2}F(\theta_0)}{1-\mu}\sqrt{\eta n T}\sqrt{\frac{G^2_\infty}{8a^3}\sigma^2_f+\frac{1}{2a}\sigma^2_g},
\end{align*}
where the last inequality is by ($v$) in Lemma \ref{lem3} and \eqref{rev1+} in Theorem \ref{prop2}.

For $S_3$, also by \eqref{rev1+} in Theorem \ref{prop2}, we have
\begin{align*}
S_3
= \frac{L_F}{2}\E\Bigg[\sum_{t=0}^{T-1}\|\theta_{t+1}-\theta_t\|^2\Bigg]
\leq \frac{L_F\eta n F^2(\theta_0)}{(1-\mu)^2} .
\end{align*}

With the above bounds on $S_1, S_2, S_3$, (\ref{ET1})  can be rearranged as
\begin{align*}
&\quad F(\theta^*) + 2\mu\eta\E\Bigg[\sum_{i=1}^{n}\sum_{t=0}^{T-1} r_{t+1,i} v_{t,i}m_{t,i}\Bigg]- \frac{L_F\eta n F^2(\theta_0)}{(1-\mu)^2}  -\frac{F(\theta_0)}{1-\mu}\sqrt{\eta n T}\sqrt{\frac{G^2_\infty}{4a^3}\sigma^2_f+\frac{1}{a}\sigma^2_g}\\
&\leq  \sum_{i=1}^{n}\E[r_{T,i}] - n \E[\tilde F_0] + F(\theta_0) \\
&\leq  \Big(\min_i \E[r_{T,i}]+(n-1)\E[\tilde F_0]\Big)- (n-1) \E[\tilde F_0] + \Big(F(\theta_0)-\E[\tilde F_0]\Big)
\\
&\leq \min_i\E[r_{T,i}]+ \E[|F(\theta_0)-\tilde F_0|] \\
& \leq \min_i\E[r_{T,i}]+ \frac{1}{2a^{1/2}}\sigma_f,
\end{align*}
where (iii) in Lemma \ref{lem3} was used. Hence,  
\begin{equation}\label{mu+}
\min_i\E[r_{T,i}]\geq F(\theta^*)+2\mu\eta\E\Bigg[\sum_{i=1}^{n}\sum_{t=0}^{T-1} r_{t+1,i} v_{t,i}m_{t,i}\Bigg]-\eta D_1-\sigma D_3,   
\end{equation}
where $\sigma=\max\{\sigma_f,\sigma_g\}$ and
\begin{align*}
&D_1 = \frac{L_F n F^2(\theta_0)}{(1-\mu)^2},\quad D_3 = \frac{1}{2a^{1/2}} + \frac{F(\theta_0)}{1-\mu}\sqrt{\eta n T}\sqrt{\frac{G^2_\infty}{4a^3}+\frac{1}{a}}.
\end{align*}
The remaining term in (\ref{mu+}) with $\mu=0$ vanishes. For $\mu>0$ we proceed to bound this term as follows:  
\begin{align*}
\sum_{i=1}^{n}\sum_{t=0}^{T-1} r_{t+1,i} v_{t,i}m_{t,i}
&\geq -\frac{1}{2} \sum_{i=1}^{n}\sum_{t=0}^{T-1} r_{t+1,i} v^2_{t,i}-\frac{1}{2}\sum_{i=1}^{n}\sum_{t=0}^{T-1} r_{t+1,i} m^2_{t,i}\\
&\geq -\frac{1}{2}(G(T,0)+G(T-1,\mu))\geq -\bigg(1+\frac{1}{(1-\mu)^2}\bigg)n\tilde F_0/(4\eta), 
\end{align*}
which allows us to obtain 
$$
2\mu\eta\E\Bigg[\sum_{i=1}^{n}\sum_{t=0}^{T-1} r_{t+1,i} v_{t,i}m_{t,i}\Bigg] \geq 
-\frac{\mu}{2}\bigg(1+\frac{1}{(1-\mu)^2}\bigg)n\E[\tilde F_0]\leq -\frac{\mu}{2}\bigg(1+\frac{1}{(1-\mu)^2}\bigg)n F(\theta_0).
$$
Therefore, 
\begin{equation*}
\min_i\E[r_{T,i}]\geq \max\{\sqrt{f(\theta^*)+c}-\eta D_1-\mu D_2-\sigma D_3,0\},   
\end{equation*}
where $\sigma=\max\{\sigma_f,\sigma_g\}$ and
\begin{align*}
&D_1 = \frac{L_F n (f(\theta_0)+c)}{(1-\mu)^2}, \quad D_2 =\frac{1}{2}\bigg(1+\frac{1}{(1-\mu)^2}\bigg)n\sqrt{f(\theta_0)+c},\\
&D_3 = \frac{1}{2a^{1/2}} + \frac{\sqrt{f(\theta_0)+c}}{1-\mu}\sqrt{\eta n T}\sqrt{\frac{G^2_\infty}{4a^3}+\frac{1}{a}}.
\end{align*}

\section{Implementation details of experiments}\label{spep}
We summarize the setup for experiments presented in Section 5 in Table \ref{tab2}, where `BS' and `WD' represent batch size and weight decay employed for each task, respectively. The last four columns are base learning rate that achieves the best final generalization performance for each method in respective tasks.

\begin{table}[ht]
\caption{Training settings in our experiments} 
\centering 
\begin{tabular}{c c c c c c c c} 
\hline\hline
Dataset & Model & BS & WD & SGDM & Adam & AEGD & AEGDM\\ [0.5ex] 
\hline 

MNIST & LeNet-5 & 128 & $1e-4$ & 0.01 & 0.001 & 0.05 & 0.008\\

CIFAR-10 & VGG-16 & 128 & $5e-4$ & 0.03 & 0.0003 & 0.1 & 0.005\\

CIFAR-10 & ResNet-32 & 128 & $1e-4$ & 0.05 & 0.001 & 0.2 & 0.008\\

CIFAR-10 & DenseNet-121 & 64 & $1e-4$ & 0.05 & 0.0005 & 0.2 & 0.02\\

CIFAR-100 & SqueezeNet & 128 & $1e-4$ & 0.3 & 0.003 & 0.2 & 0.02\\

CIFAR-100 & GoogleNet & 128 & $1e-4$ & 0.2 & 0.0003 & 0.2 & 0.03\\[1ex] 
\hline 
\end{tabular}
\label{tab2} 
\end{table}

Figure \ref{fig:cifar10a}, \ref{fig:cifar100a} present the comparison results where the defaults base learning rate for each method:
\begin{itemize}
\item AEGDM: $0.01$.
\item AEGD: $0.1$.
\item SGDM: $0.01$ for VGG-16 (on both CIFAR10 and CIFAR100), $0.1$ for other tasks.
\item AdaBelief, AdaBound, RAdam, Yogi, Adam: $0.001$.
\end{itemize}
is used in all tasks. Also different from the setting reported in Table \ref{tab2}, here we set batch size as $128$ and weight decay as $5\times 10^{-4}$ in all tasks. It can be seen that AEGDM and AEGD generalize better than all other methods, and AEGD display smaller oscillation / faster convergence than AEGD.

The experiments were coded in PyTorch and conducted using job scheduling on Intel E5-2640 v3 CPU (two 2.6 GHz 8-Core) with 128 GB Memory per Node.

\begin{figure}[ht]
\begin{subfigure}[b]{0.33\linewidth}
\centering
\includegraphics[width=1\linewidth]{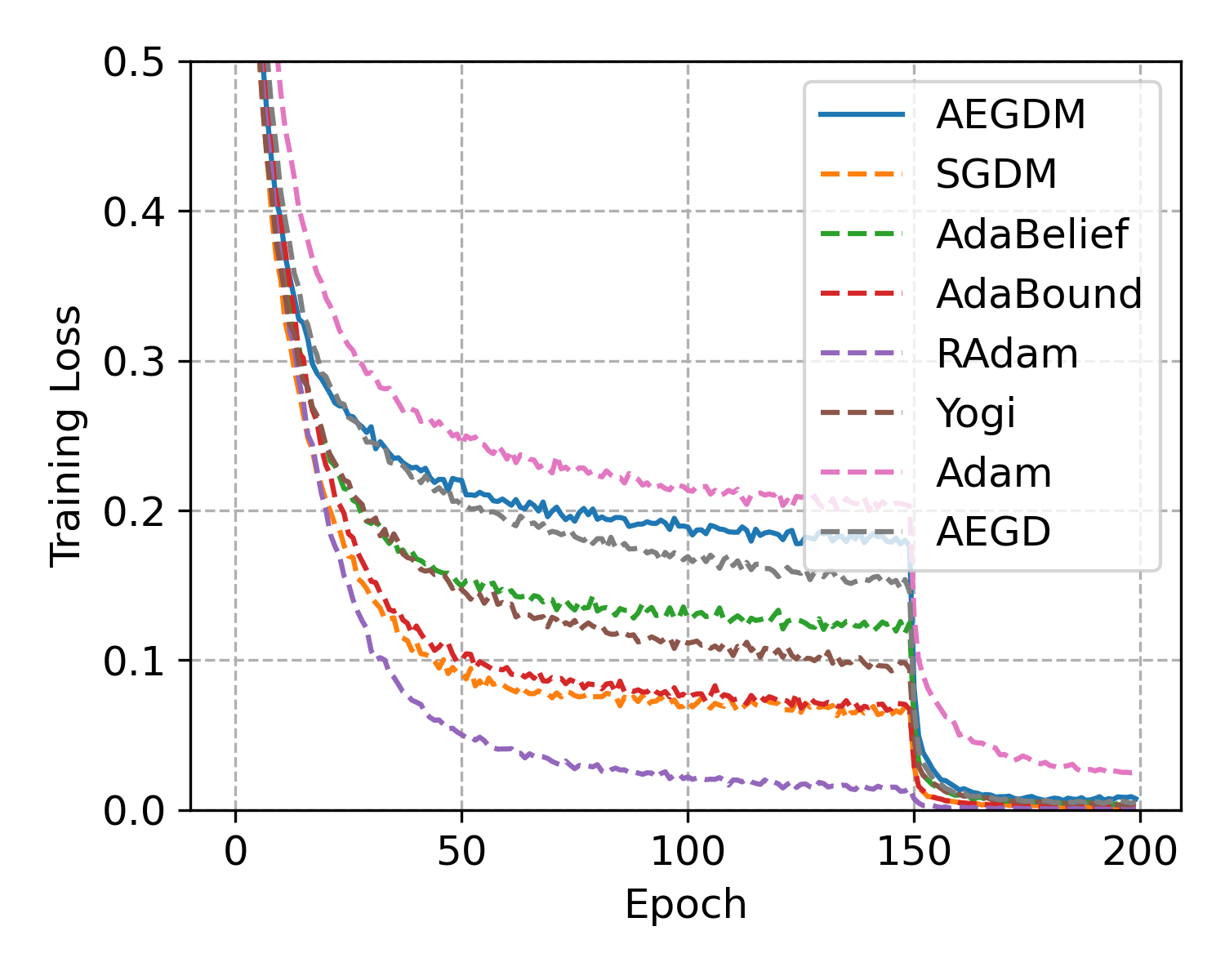}
\caption{VGG-16, training loss}
\end{subfigure}%
\begin{subfigure}[b]{0.33\linewidth}
\centering
\includegraphics[width=1\linewidth]{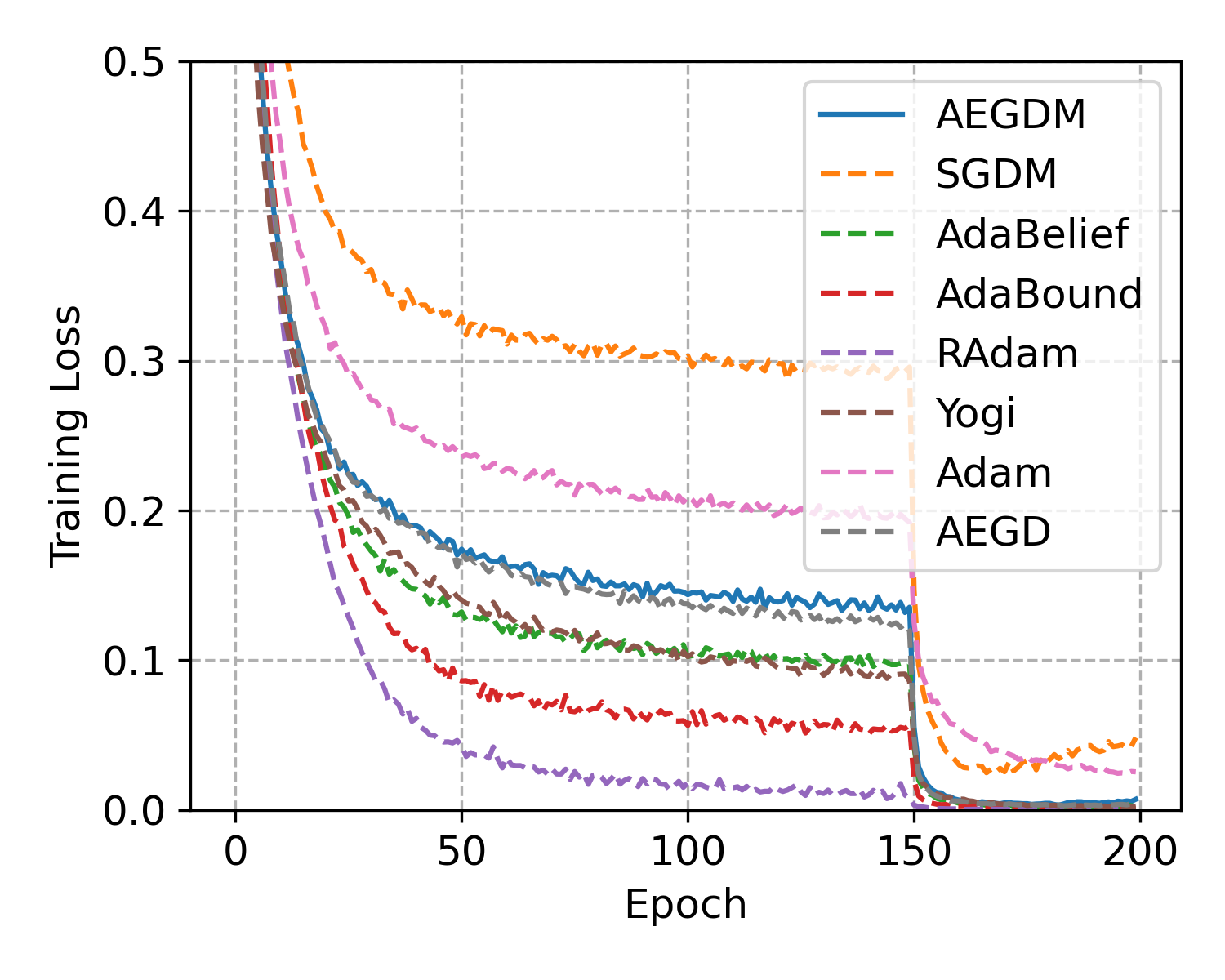}
\caption{ResNet-34, training loss}
\end{subfigure}%
\begin{subfigure}[b]{0.33\linewidth}
\centering
\includegraphics[width=1\linewidth]{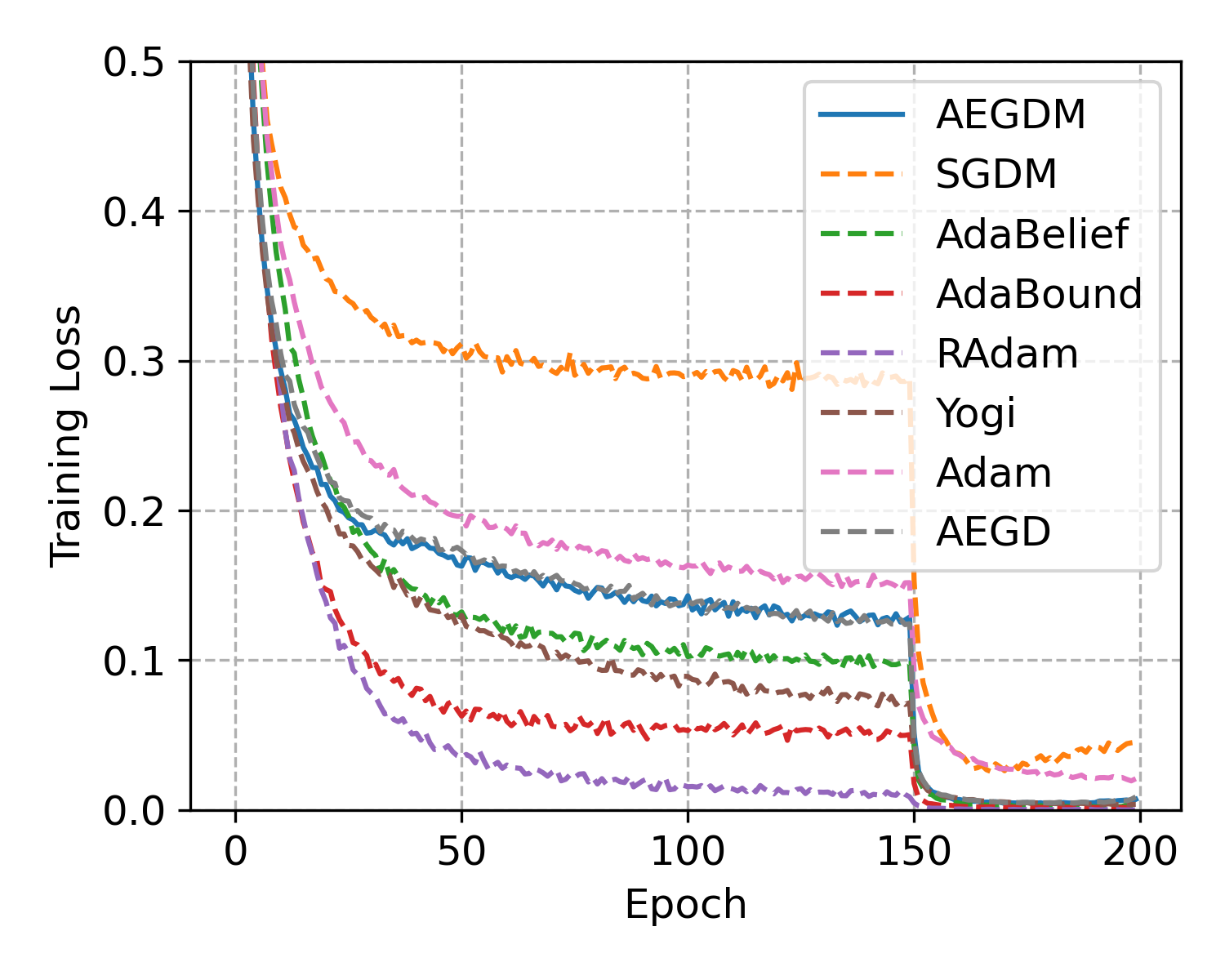}
\caption{DenseNet-121, training loss}
\end{subfigure}%
\newline
\begin{subfigure}[b]{0.33\linewidth}
\centering
\includegraphics[width=1\linewidth]{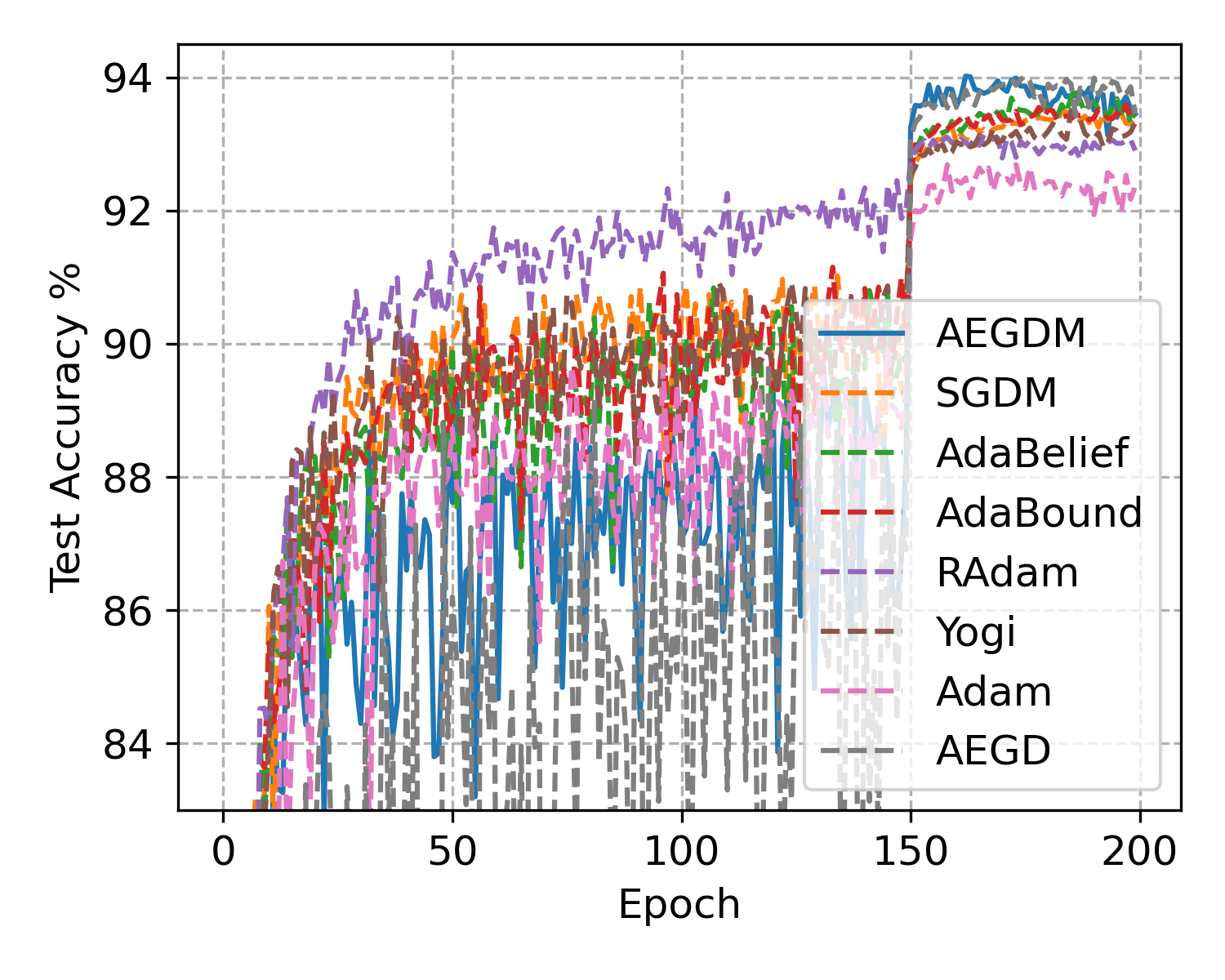}
\caption{VGG-16, test accuracy}
\end{subfigure}%
\begin{subfigure}[b]{0.33\linewidth}
\centering
\includegraphics[width=1\linewidth]{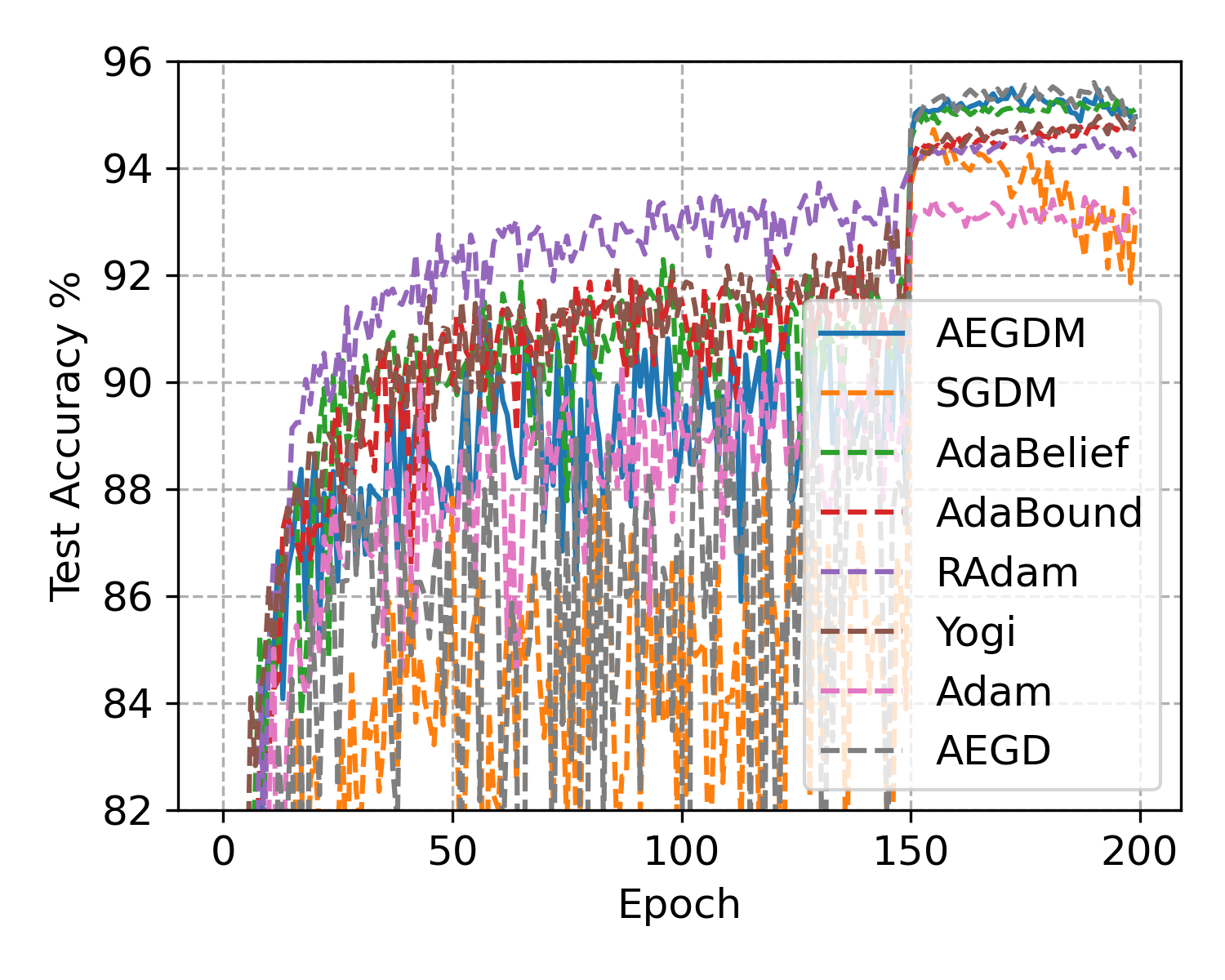}
\caption{ResNet-34, test accuracy}
\end{subfigure}%
\begin{subfigure}[b]{0.33\linewidth}
\centering
\includegraphics[width=1\linewidth]{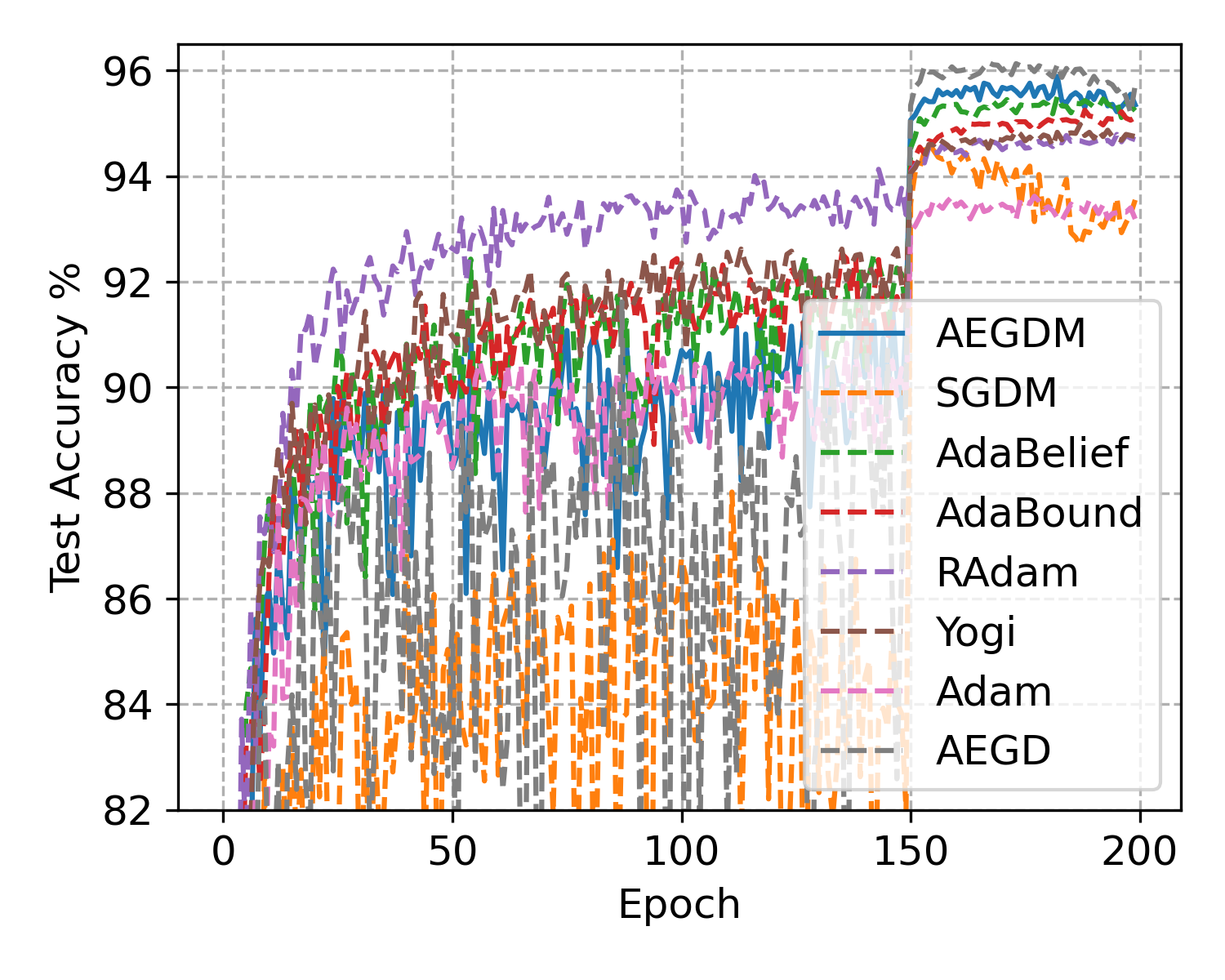}
\caption{DenseNet-121, test accuracy}
\end{subfigure}%
\captionsetup{format=hang}
\caption{Test accuracy for VGG-16, ResNet-34 and DenseNet-121 on CIFAR-10}
\label{fig:cifar10a}
\end{figure}

\begin{figure}[ht]
\begin{subfigure}[b]{0.33\linewidth}
\centering
\includegraphics[width=1\linewidth]{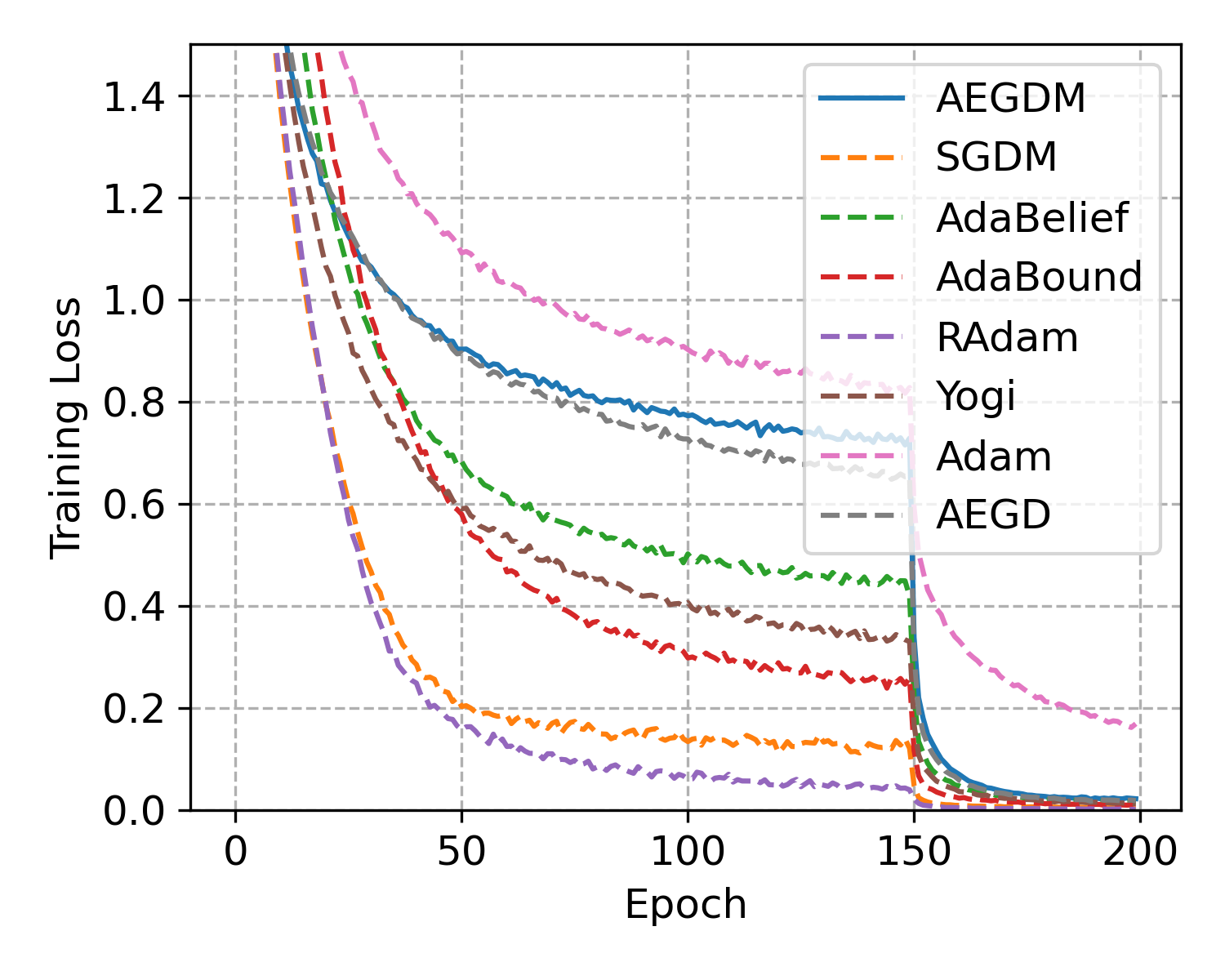}
\caption{VGG-16, training loss}
\end{subfigure}%
\begin{subfigure}[b]{0.33\linewidth}
\centering
\includegraphics[width=1\linewidth]{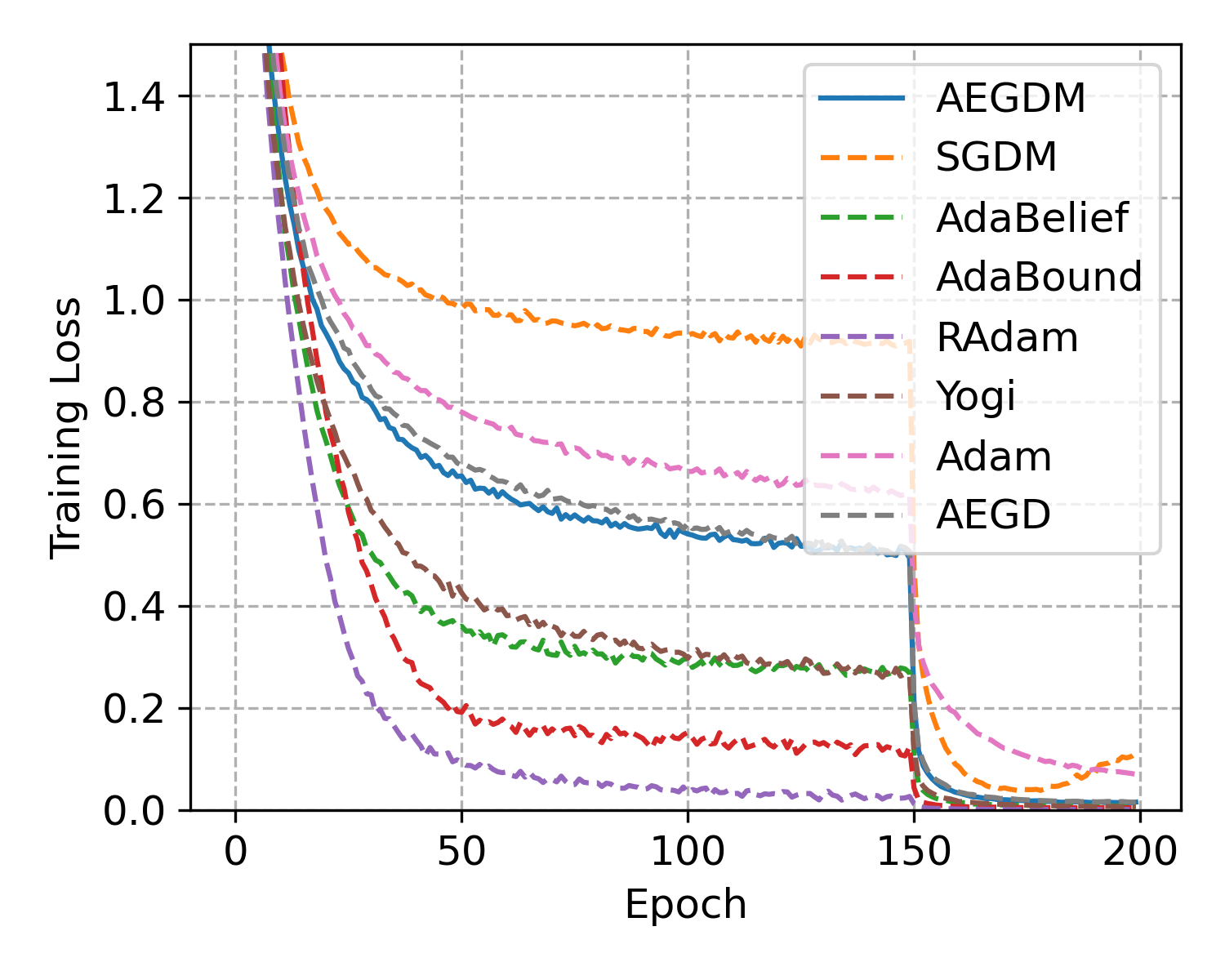}
\caption{ResNet-34, training loss}
\end{subfigure}%
\begin{subfigure}[b]{0.33\linewidth}
\centering
\includegraphics[width=1\linewidth]{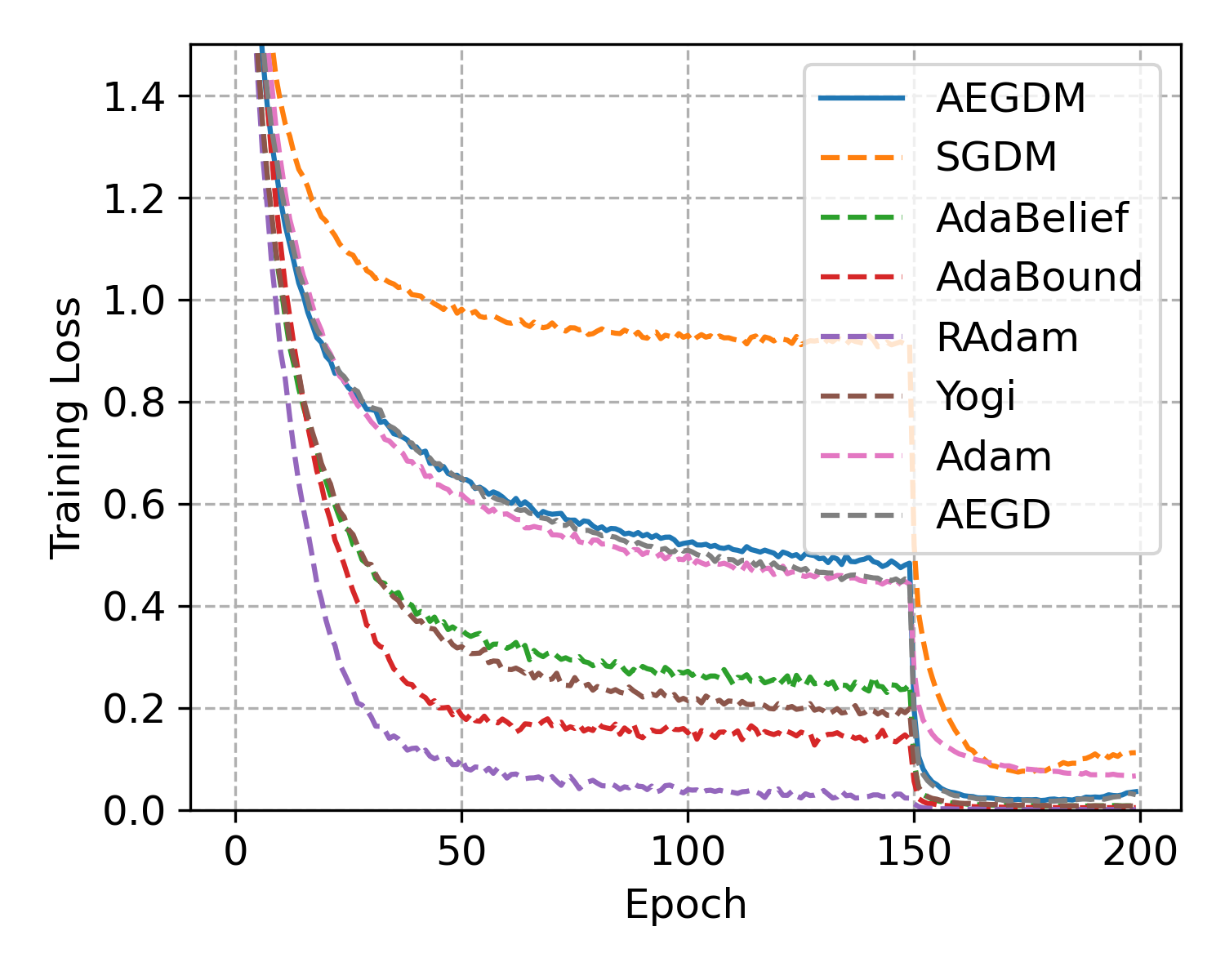}
\caption{DenseNet-121, training loss}
\end{subfigure}%
\newline
\begin{subfigure}[b]{0.33\linewidth}
\centering
\includegraphics[width=1\linewidth]{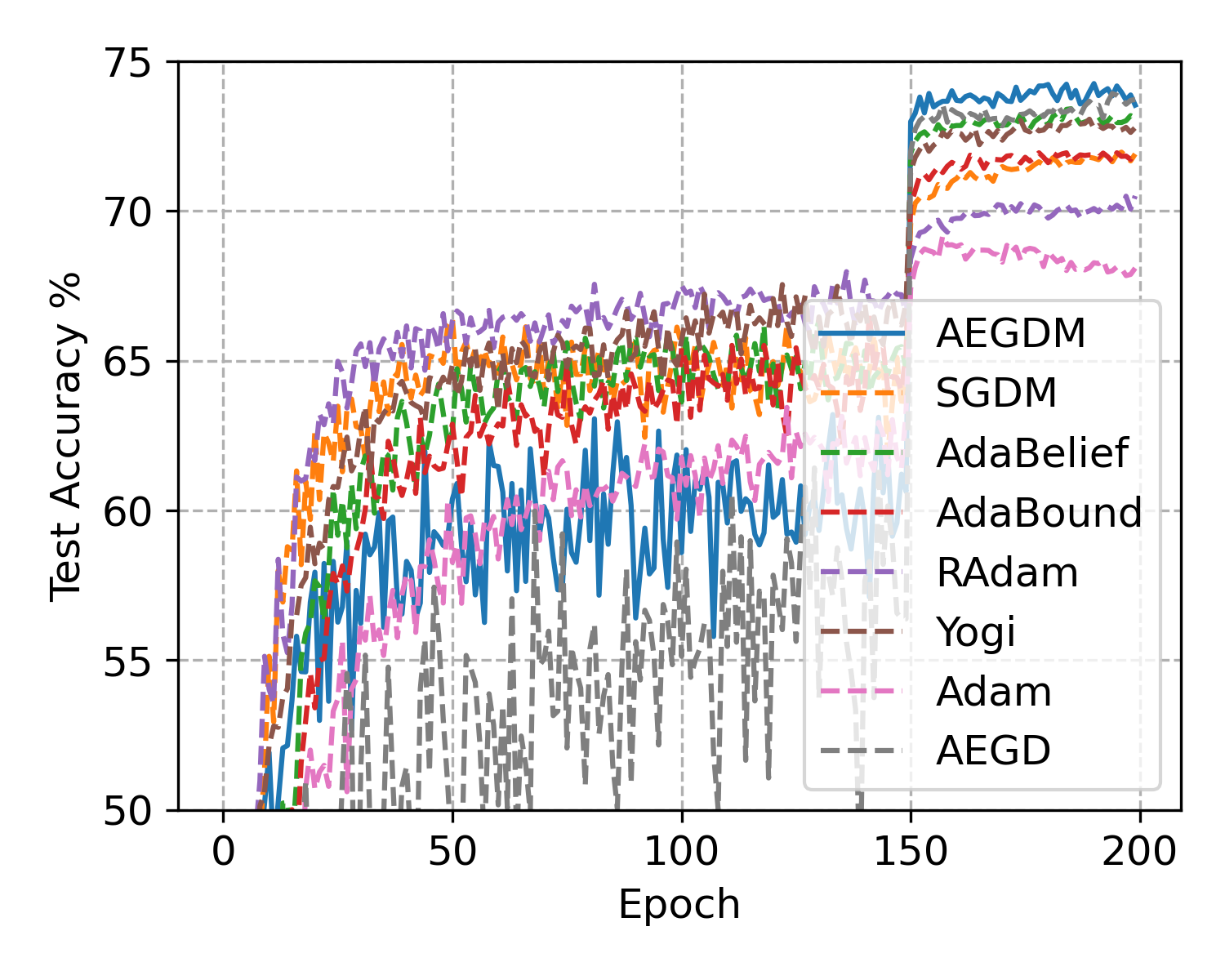}
\caption{VGG-16, test accuracy}
\end{subfigure}%
\begin{subfigure}[b]{0.33\linewidth}
\centering
\includegraphics[width=1\linewidth]{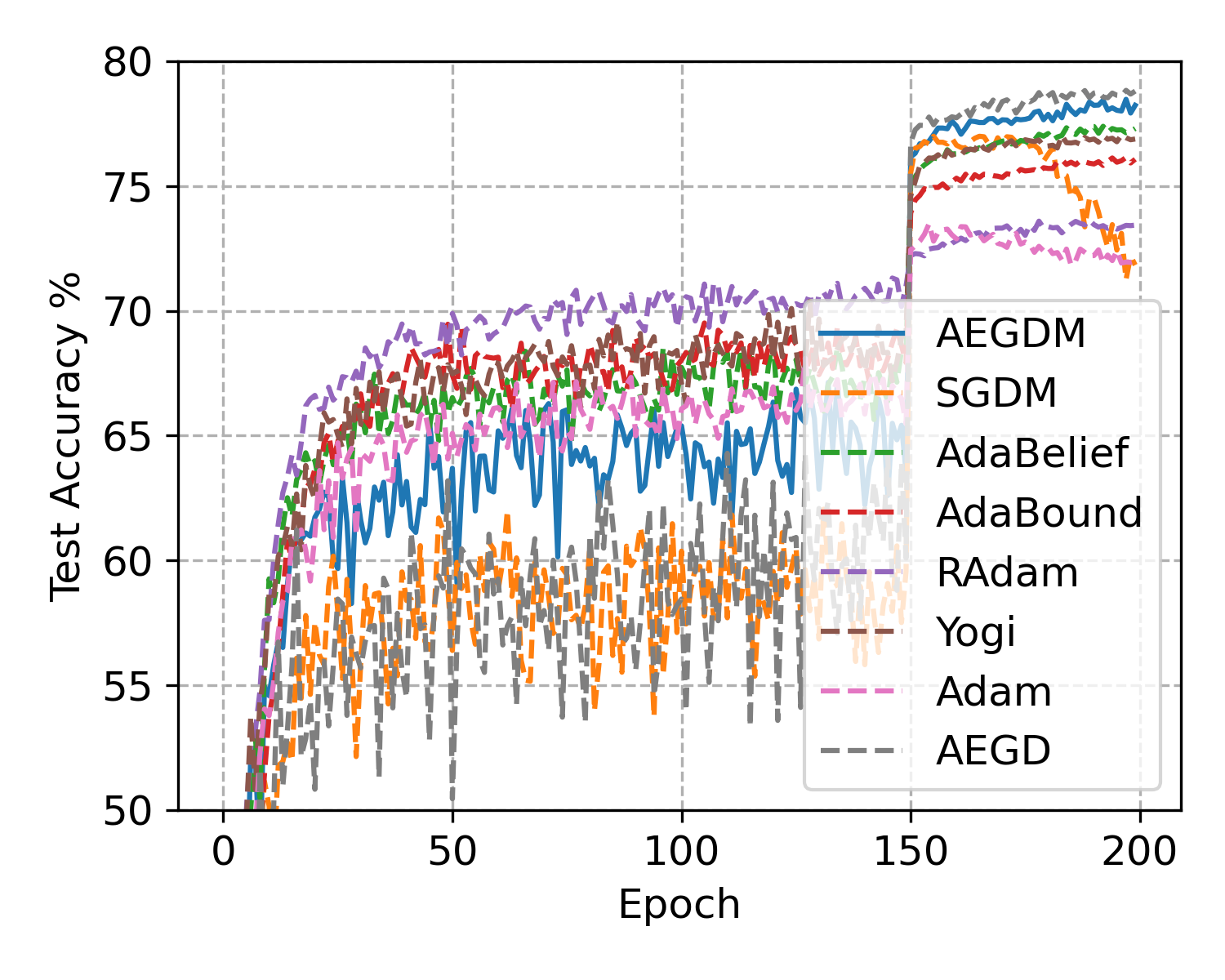}
\caption{ResNet-34, test accuracy}
\end{subfigure}%
\begin{subfigure}[b]{0.33\linewidth}
\centering
\includegraphics[width=1\linewidth]{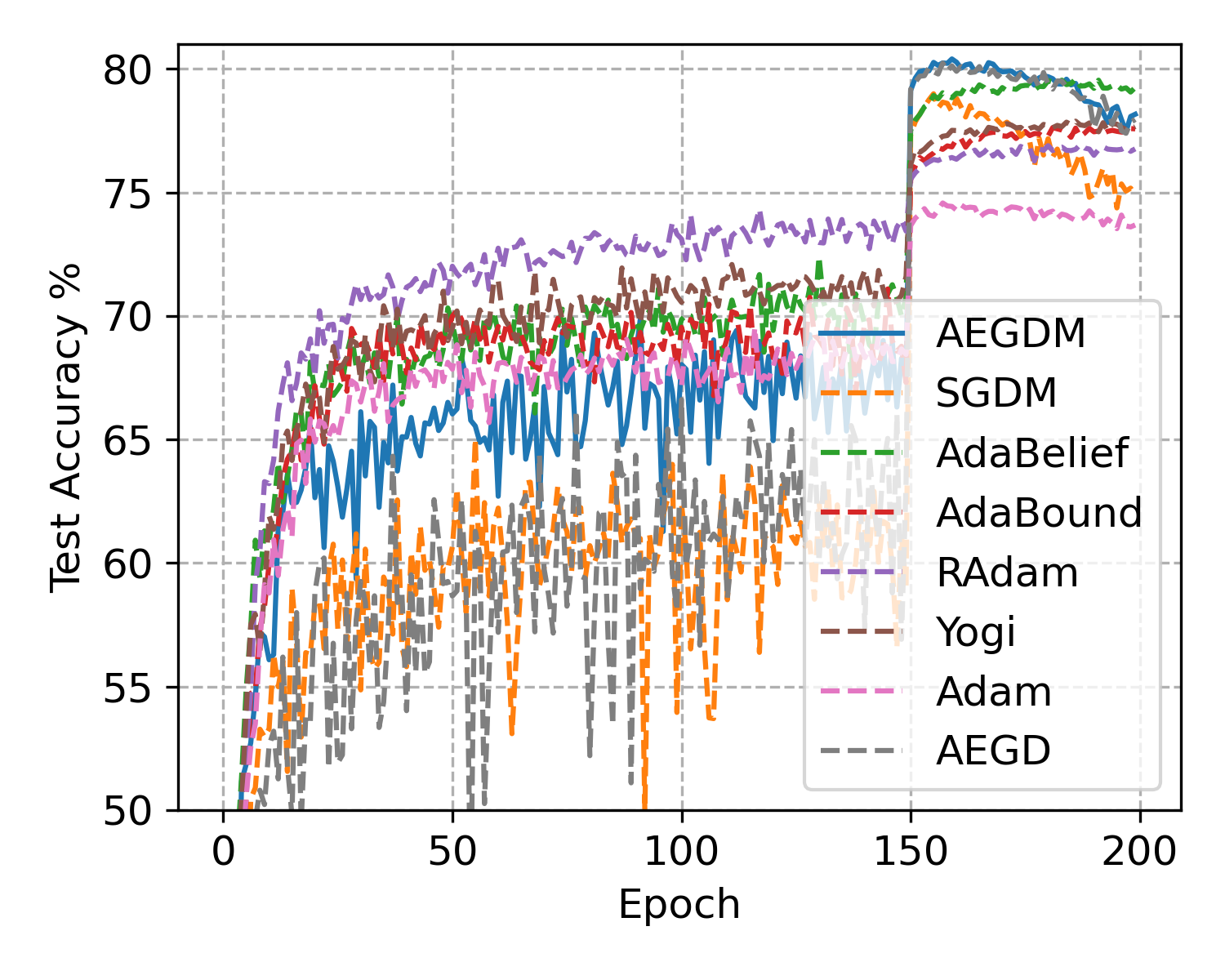}
\caption{DenseNet-121, test accuracy}
\end{subfigure}%
\captionsetup{format=hang}
\caption{Test accuracy for VGG-16, ResNet-34 and DenseNet-121 on CIFAR-100}
\label{fig:cifar100a}
\end{figure}

\begin{figure}[ht]
\centering
\includegraphics[width=0.35\linewidth]{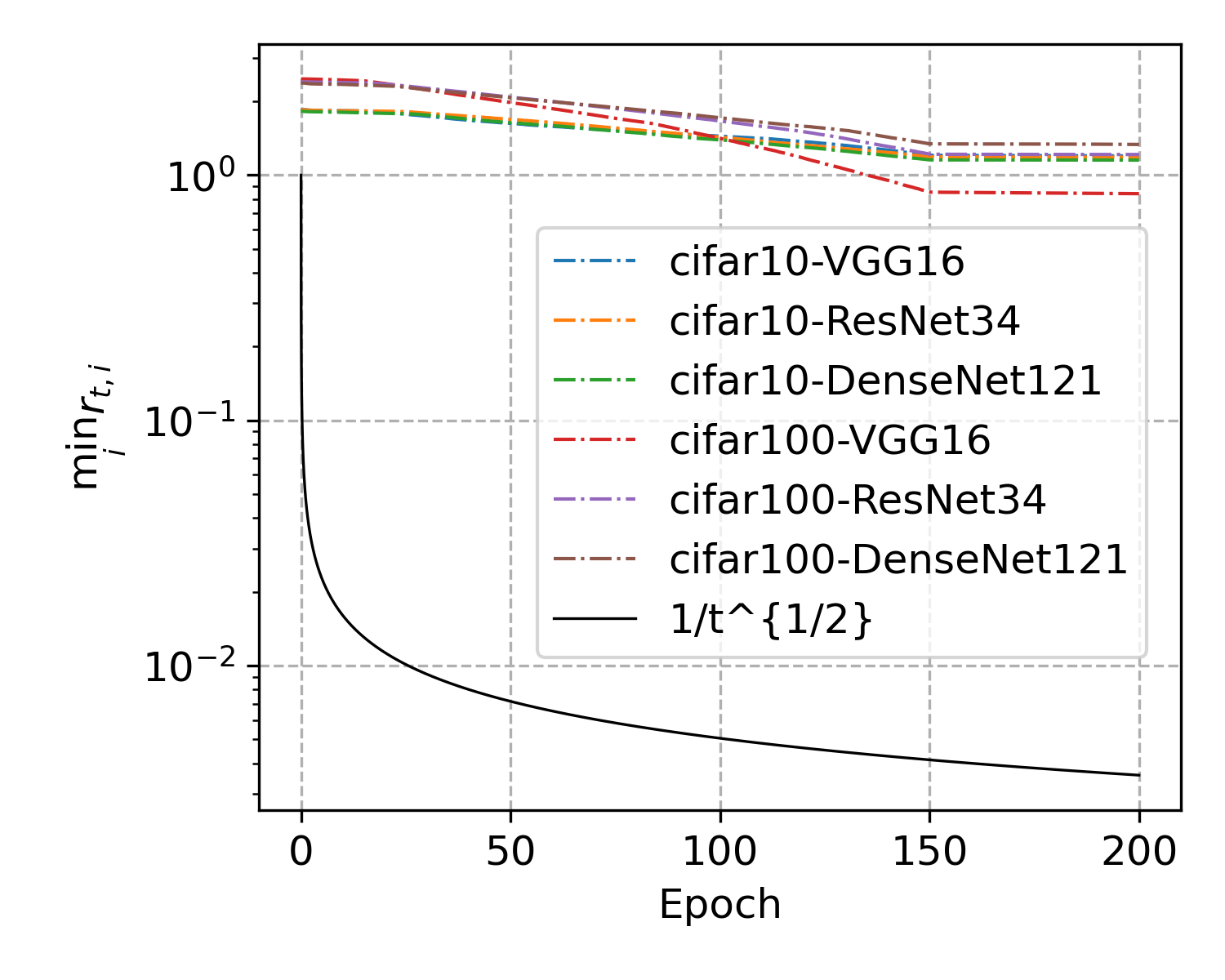}
\caption{$\min_i r_{t,i}$ of AEGDM with default base learning rate $0.01$ in neural network tasks.}
\label{fig:minr}
\end{figure}%

\section{A comparison of some  gradient-based methods} \label{A-framewrok} 
How does AEGDM compare with AEGD, SGD, SGDM, Adam and other adaptive methods?
We apply a more generic formulation, so that these methods will all take the following form 
\begin{equation}\label{afw}
\begin{aligned}
\theta_{t+1}=\theta_t-\eta A_t^{-1}m_{t+1}.
\end{aligned}   
\end{equation}
Here $m_{t+1}$ depends on $\{v_j\}_{j=0}^t$, historical search direction, and $A^{-1}_t$ is a diagonal matrix.
The diagonal form of $A^{-1}_t$ allows for different effective learning rates for different coordinates. The use of inverse $A_t^{-1}$ here is to be consistent with the form of the natural gradient method \cite{A98}, $\theta \leftarrow \theta -\eta A^{-1}\nabla f(\theta)$, in which $A$ is typically a positive definite matrix, playing the role of a metric matrix for certain Riemannian manifold.   
Table \ref{tab} is a comparison in terms of different choices of $A_t^{-1}$ and/or $m_{t+1}$.   

\begin{table}[ht]
\caption{Comparison of some optimization algorithms} 
\centering 
\begin{tabular}{c c c c} 
\hline\hline
  & $v_t$  & $m_{t+1}$ & $A^{-1}_{t}$ \\ [0.5ex] 
\hline 
SGD & $\nabla f_{t}(\theta_t)$ & $v_t$ & $\mathbb{I}$\\

SGDM & $\nabla f_{t}(\theta_t)$ & $\sum_{j=0}^{t}\mu^{t-j}v_j$ & $\mathbb{I}$\\

RMSprop & $\nabla f_{t}(\theta_t)$ & $v_t$ & ${\rm diag}\Big[\Big((1-\beta_2)\sum_{j=0}^{t}\beta^{t-j}_2v_j\odot v_j \Big)^{-\frac{1}{2}}\Big]$\\

ADAM & $\nabla f_{t}(\theta_t)$ & $(1-\beta_1)\sum_{j=0}^{t}\beta^{t-j}_1v_j$ & ${\rm diag}\Big[\Big((1-\beta_2)\sum_{j=0}^{t}\beta^{t-j}_2v_j\odot v_j \Big)^{-\frac{1}{2}}\Big]$ \\[1ex]

AEGD & $\frac{\nabla f_{t}(\theta_t)}{2\sqrt{f_{t}(\theta_t)+c}}$ & $v_t$ & ${\rm diag}\Big[r_0\odot \Big(\prod_{j=0}^{t}(1+2\eta v_j\odot v_j )\Big)^{-1}\Big]$\\[1ex] 
AEGDM & $\frac{\nabla f_{t}(\theta_t)}{2\sqrt{f_{t}(\theta_t)+c}}$ & $\sum_{j=0}^{t}\mu^{t-j}v_j$ & ${\rm diag}\Big[r_0\odot \Big(\prod_{j=0}^{t}(1+2\eta v_j\odot v_j )\Big)^{-1}\Big]$\\[1ex]
\hline 
\end{tabular}
\label{tab} 
\end{table}

Note that in Table \ref{tab}, the negative power function is understood as an element-wise operation. For the hyper-parameters,  $\mu\in (0, 1)$ for SGDM and AEGDM; $r_0=\sqrt{f_{0}(\theta_0)+c}{\bf 1}$ for both AEGD and AEGDM;
$\beta_1 \in (0, 1)$ and $\beta_2 \in (0, 1)$
for Adam and RMSprop, subject to possible unbiased corrections \cite{KB17,TH12}.   


\end{document}